\selectfont\symbol{60}\fontencoding{\encodingdefault}}
\selectfont\symbol{62}\fontencoding{\encodingdefault}}
    \newcommand{\assign}{:=}
    \newcommand{\equallim}{\mathop{=}\limits}
    \newcommand{\mathd}{\mathrm{d}}
    \newcommand{\mathe}{\mathrm{e}}
    \newcommand{\nobracket}{}
    \newcommand{\tmcolor}[2]{{\color{#1}{#2}}}
    \newcommand{\tmdfn}[1]{\textbf{#1}}
    \newcommand{\tmem}[1]{{\em #1\/}}
    \newcommand{\tmfoldedenv}[2]{\trivlist{\item[]\mbox{}#1}}
    \newcommand{\tmop}[1]{\ensuremath{\operatorname{#1}}}
    \newcommand{\tmtextit}[1]{{\itshape{#1}}}
    \newenvironment{enumerateroman}{\begin{enumerate}[i.] }{\end{enumerate}}
    \newtheorem{corollary}{Corollary}
    \newtheorem{lemma}{Lemma}
    \newtheorem{proposition}{Proposition}
    \newtheorem{remark}{Remark}
    \newtheorem{theorem}{Theorem}
\begin{document}
    
    \title{ Microscopic derivation of the Keller-Segel equation in the
    sub-critical regime}
    
    \author{Ana Ca{\~n}izares Garc{\'i}a}
    \address{LMU-Munich}
    \email{caniz@math.lmu.de}
    
    \author{Peter Pickl}
    \address{LMU-Munich}
    \email{pickl@math.lmu.de}
    
    \date{February, 6th 2017}
    
    \begin{abstract}
      We derive the two-dimensional Keller-Segel equation from a stochastic system
      of $N$ interacting particles in the case of sub-critical chemosensitivity
      $\chi < 8 \pi$. The Coulomb interaction force is regularised with a cutoff
      of size $N^{- \alpha}$, with arbitrary $\alpha \in (0, 1 / 2)$. In
      particular we obtain a quantitative result for the maximal distance between
      the real and mean-field $N$-particle trajectories.
    \end{abstract}
    
    {\maketitle}
    
    \tmfoldedenv{\ }{\subsubsection{TO-DO: Intro}
    
    \subsubsection{TO-DO: Main result}
    
    -> Determine $N_0$
    
    - Comment on empirical measures
    
    \subsubsection{TO-DO: Properties of solutions}
    
    Proof of the $L^{\infty}$- estimates for $\rho^N$ and deduce the ones for
    $\rho$ from here.
    
    \subsubsection{TO-DO: Lipschitz bound}
    
    \subsubsection{TO-DO: LoLN}
    
    \subsubsection{TO-DO: Lemma}
    
    -Existence, uniqueness of the regularised and linearised macroscopic equation
    (\ref{macroscopic-eq-linearised}) with $\delta$ initial condition. Processes
    with the strong Feller property have a transition probability density, which
    solves the linearised KS equation. Does $Z^x$ have the strong Feller property?
    
    -Variation of constants (\ref{variation-of-constants}) for $u_t^{x_i}$
    
    -$\| G (t, x - x_0) - G (t, x - y_0) \|_1$}
    
    \section{Introduction}\label{equations}
    
    The Keller-Segel equation {\cite{keller_initiation_1970}} is known as the
    classical model of {\tmem{chemotaxis}}, which in Biology refers to the
    movement of organisms guided by an external chemical substance and has been
    observed in some species of bacteria or amoeba. The Keller-Segel equation,
    concretely motivated by the behaviour of the unicellular organism
    {\tmem{Dictyostelium discoidium}}, models a situation in which cells naturally
    spread out but under starvation circumstances also attract other cells by
    segregating an attractive chemical substance. We consider the two-dimensional
    Keller-Segel equation:
    \begin{equation}
      \partial_t \rho = \Delta \rho + \chi \nabla ((k \ast \rho) \rho), \qquad
      \rho (0, \cdot) = \rho_0 \label{macroscopic-eq} .
    \end{equation}
    Here $\rho : [0, \infty) \times \mathbbm{R}^2 \rightarrow [0, \infty)$ is the
    evolution of the cell population density for an initial value $\rho_0 :
    \mathbbm{R}^2 \rightarrow [0, \infty)$, the interaction force kernel $k :
    \mathbbm{R}^2 \rightarrow \mathbbm{R}^2$ is given by $k (x) \assign \frac{x}{2
    \pi | x |^2}$ and the constant $\chi > 0$ denotes the
    {\tmem{chemosensitivity}} or response of the cells to the chemical
    substance\footnote{This form of the equation results from the Keller-Segel
    parabolic-elliptic system if the concentration of chemical substance is taken
    to be the Newtonian potential of the density of cells
    {\cite{haskovec_stochastic_2009}}.}. This model reflects the characteristic
    competition between diffusion and aggregation in such a chemotactical process.
    Mathematically this results in the interesting effect that in some cases
    smooth solutions exist for all times, while in others solutions {\tmem{blow
    up}} in finite time\footnote{A solution $\rho (t, x)$ is said to blow up in
    finite time if $\lim_{t \rightarrow T}  \| \rho (t, \cdot) \|_{L^{\infty}} =
    \infty$ for some finite time $T$.} (corresponding to clustering of the cells).
    Furthermore, the existence of global solutions or the presence of blow-up
    events strongly depend on the dimension, mass and chemosensitivity of the
    system: in one dimension the solution exists globally, but in higher
    dimensions blow-up events in finite time may or may not occur depending on the
    initial mass $M \assign \int_{\mathbbm{R}^2} \rho_0 (x)$ and the
    chemosensitivity $\chi$. This role for the $2$-dimensional description was
    completely understood for the first time less than a decade ago: if $\chi M <
    8 \pi$, a global bounded solution exists, while for $\chi M > 8 \pi$ blow-up
    in finite time always takes place. Finally, if $\chi M = 8 \pi$ a global
    solution exists which possibly becomes unbounded as $t \rightarrow \infty$
    {\cite{blanchet_two-dimensional_2006}}, {\cite{dolbeault_optimal_2004}},
    {\cite{blanchet_infinite_2008}}. Here we work in a probabilistic setting and
    for convenience assume an initial mass $M = 1$. The threshold condition for
    the existence of global solutions is therefore at $\chi = 8 \pi$.
    
    Our purpose in this paper is to derive the deterministic {\tmem{macroscopic}}
    equation (\ref{macroscopic-eq}) in the sub-critical regime $\chi \in (0, 8
    \pi)$ as the mean-field limit of the following {\tmem{microscopic}} stochastic
    $N$-particle system as $N \rightarrow \infty$:
    \begin{equation}
      \mathd X_t^{i (N)} = - \frac{\chi}{N}  \sum^N_{j \neq i} k (X_t^{i (N)} -
      X_t^{j (N)}) \mathd t + \sqrt{2} \mathd B^i_t, \quad i = 1, \ldots, N, \quad
      X^{(N)}_0 \sim \bigotimes_{i = 1}^N \rho_0, \label{particle-system}
    \end{equation}
    where the process $X^{i (N)} : [0, \infty) \rightarrow \mathbbm{R}^2$ denotes
    the trajectory of the $i$-th particle, $(B^i)_{i \in \mathbbm{N}}$ is a family
    of 2-dimensional independent Brownian motions, $X^{(N)}_t \in \mathbbm{R}^{2
    N}$ denotes the vector $X^{(N)}_t \assign (X_t^{1 (N)}, \ldots, X_t^{N
    (N)})$\footnote{We introduce the notation $(N)$ for the number of particles in
    order to differentiate between these trajectories and the regularised ones. We
    nevertheless just use this notation during the introduction, since in the
    following sections we only work with the regularised equations.}, and at the
    initial time $t = 0$ the particles are independently distributed according to
    the initial density $\rho_0$. To this end we prove the property of
    {\tmdfn{propagation of chaos}} for regularised versions \ (with a cutoff
    depending on $N$) of these equations in Corollary \ref{main-corollary}. We
    obtain the propagation of chaos as a consequence of Theorem \ref{main-result},
    where the real trajectories $X^{i (N)}$ are shown to remain close to the
    mean-field trajectories, defined by (\ref{microscopic-mean}) below, if both
    started at the same point. The mean-field trajectories are given by the
    following equation:
    \begin{equation}
      \mathd Y_t^{i (N)} = - \chi  (k \ast \rho_t) (Y_t^{i (N)}) \mathd t +
      \sqrt{2} \mathd B_t^i, \quad i = 1, \ldots, N, \quad Y^{(N)}_0 = X^{(N)}_0,
      \label{microscopic-mean}
    \end{equation}
    where $\rho_t =\mathcal{L} (Y^{i (N)}_t)$ is the probability distribution of
    any of the i.i.d. $Y_t^{i (N)}$. We remark that the Keller-Segel equation
    (\ref{macroscopic-eq}) is Kolmogorov's forward equation for any solution of
    (\ref{microscopic-mean}), and in particular their probability distribution
    $\rho_t$ solves (\ref{macroscopic-eq}).
    
    The work of Cattiaux and P{\'e}d{\`e}ches {\cite{cattiaux_2-d_2016}} is
    relevant for the existence of solutions of the stochastic particle system
    (\ref{particle-system}) and their properties. Furthermore, the derivation of
    the macroscopic equation (\ref{macroscopic-eq}) from the many-particle system
    (\ref{particle-system}) or propagation of chaos has been addressed in the past
    years by several mathematicians for modified problems: for a regularised
    interaction force $k_{\varepsilon} (x) \assign \frac{x}{| x |  (| x | +
    \varepsilon)}$ in {\cite{haskovec_stochastic_2009}} and for a sub-Keller-Segel
    equation with a less singular force $k_{\alpha} (x) \assign \frac{x}{| x
    |^{\alpha + 1}}$, $0 < \alpha < 1$, in {\cite{godinho_propagation_2013}}. More
    recently, great progress has been made for the purely Coulomb case ($\alpha =
    1$): Fournier and Jourdain {\cite{fournier_stochastic_2015}} proved the
    convergence of a subsequence for the particle system (\ref{particle-system})
    by a tightness argument in the {\tmem{very sub-critical}} case $\chi < 2 \pi$
    using no cutoff at all; the convergence of the whole sequence (and therefore
    propagation of chaos) was nevertheless not achieved. Liu et al. published in
    the past year several results on propagation of chaos of (\ref{particle-system}) {\cite{liu_random_2015}}, {\cite{huang_error_2016}},
    {\cite{liu_convergence_2015-1}}, the last of them containing the strongest
    result available to date to our knowledge. We improve their result in two
    aspects. On the one hand our conditions (\ref{condition-initial-data}) on the
    initial density $\rho_0$ are weaker: Liu and Zhang assume $\rho_0$ is
    compactly supported, Lipschitz continuous and $\rho_0 \in H^4
    (\mathbbm{R}^2)$. On the other hand our initial configuration for the $N$
    particles is less restrictive: ours are i.i.d. random variables on
    $\mathbbm{R}^2$, while their particles are distributed on a grid. Our approach
    adapts a method that seems to be powerful for deriving the mean-field limit of
    some $N$-particle systems with Coulomb interactions, which was presented by
    Boers, Pickl {\cite{boers_mean_2016}} and Lazarovizi, Pickl
    {\cite{lazarovici_mean-field_2015}} for the derivation of the Vlasov-Poisson
    equation from an $N$-particle Coulomb system for typical initial
    conditions.
    
    \
    
    \paragraph{Conditions on the chemosensitivity and the initial density}
    
    We assume throughout this note a sub-critical chemosensitivity $\chi \in (0, 8
    \pi)$ and the following conditions on the initial density $\rho_0$:
    \begin{eqnarray}
      \rho_0 & \in & L^1 (\mathbbm{R}^2, (1 + | x |^2) \mathd x) \cap L^{\infty}
      (\mathbbm{R}^2) \cap H^2 (\mathbbm{R}^2), \nonumber\\
      \rho_0 & \geqslant & 0, \nonumber\\
      \int_{\mathbbm{R}^2} \rho_0 (x) \mathd x & = & 1, \nonumber\\
      \rho_0 \log \rho_0 & \in & L^1 (\mathbbm{R}^2) .  \label{condition-initial-data}
    \end{eqnarray}
    These conditions guarantee global existence, uniqueness and further good
    properties of the solution of the macroscopic equation (\ref{macroscopic-eq}).
    Section \ref{properties-of-solutions} reviews these results and the
    corresponding ones for the solutions of the microscopic system.
    
    \paragraph{Regularisation of the interaction force}{\nopagebreak}
    
    We introduce the following $N$-dependent regularisation of the Coulomb
    interaction force. Let $\phi^1 : \mathbbm{R}^2 \rightarrow [0, \infty)$ be a
    radially symmetric, smooth function with the following properties:
    \[ \phi^1 (x) \assign \left\{ \begin{array}{ll}
         - \frac{1}{2 \pi} \log | x |, & | x | \geqslant 2,\\
         0, & | x | \leqslant 1,
       \end{array} \right. \]
    $| \nabla \phi^1 (x) | \leqslant (2 \pi | x |)^{- 1}$, $- \Delta \phi^1 (x)
    \geqslant 0$ and $| \partial^2_{i j} \phi^1 (x) | \leqslant (\pi | x |^2)^{-
    1}$ for any $x \in \mathbbm{R}^2$, $i, j = 1, 2$. For each $N \in \mathbbm{N}$
    and $\alpha \in (0, 1 / 2)$, let $\phi^N (x) = \phi^1 (N^{\alpha} x)$ and
    consider the regularised interaction force $k^N = - \nabla \phi^N$, which by
    construction satisfies
    \[ k^N (x) \assign \left\{ \begin{array}{ll}
         \frac{x}{2 \pi | x |^2}, & | x | \geqslant 2 N^{- \alpha}\\
         0, & | x | \leqslant N^{- \alpha}
       \end{array} \right. \]
    and
    \[ | \partial_i k^N (x) | \leqslant \left\{ \begin{array}{ll}
         \frac{1}{\pi | x |^2}, & | x | > N^{- \alpha}\\
         0, & | x | \leqslant N^{- \alpha}
       \end{array} \right., \quad i = 1, 2. \]

    For an initial density $\rho_0$ satisfying the above conditions
    (\ref{condition-initial-data}) and each $N \in \mathbbm{N}$ we consider the
    regularised Keller-Segel equation
    \begin{equation}
      \partial_t \rho^N = \Delta \rho^N + \chi \nabla ((k^N \ast \rho^N) \rho^N),
      \qquad \rho^N (0, \cdot) = \rho_0 \label{macroscopic-eq-regularised},
    \end{equation}
    the regularised microscopic $N$-particle system, for $i = 1, \ldots, N$,
    \begin{equation}
      \mathd X_t^{i (N), N} = - \frac{\chi}{N}  \sum_{j \neq i} k^N (X_t^{i (N),
      N} - X_t^{j (N), N}) \mathd t + \sqrt{2} \mathd B^i_t, \quad i = 1, \ldots,
      N, \quad X_0^{(N), N} \sim \bigotimes_{i = 1}^N \rho_0, \label{microscopic-eq-regularised}
    \end{equation}
    and the regularised mean-trajectories
    \begin{equation}
      \mathd Y_t^{i (N), N} = - \chi  (k^N \ast \rho^N_t) (Y_t^{i (N), N})
      \mathd t + \sqrt{2} \mathd B_t^i, \quad i = 1, \ldots, N, \quad Y^{(N), N}_0
      = X^{(N), N}_0 \label{microscopic-mean-regularised}
    \end{equation}
    where $\rho^N_t$ denotes the probability distribution of $Y^{i (N), N}_t$, for
    any $i = 1, \ldots, N$. As in the non-regularised version this implies that
    $\rho^N$ solves the regularised Keller-Segel equation (\ref{macroscopic-eq-regularised}).  For simplicity of notation, and since the number of particles
    $N$ already becomes apparent by the dependency of $N$ of the cutoff, we will
    just write $X^{i, N}$ and $Y^{i, N}$ instead of $X^{i (N), N}$ and $Y^{i (N),
    N}$, as well as $X^N$ and $Y^N$ for the vectors $X^{(N), N}$ and $Y^{(N), N}$.
    It is also convenient to denote the regularised interaction force as
    \begin{equation}
      K^N_i (x_1, \ldots, x_N) \assign - \frac{\chi}{N}  \sum_{j \neq i} k^N (x_i
      - x_j) \label{regularised-force}
    \end{equation}
    and the mean interaction force as
    \[ \overline{K}^N_{t, i} (x_1, \ldots, x_N) \assign - \chi (k^N \ast \rho^N_t)
       (x_i), \]
    where $\rho^N_t =\mathcal{L} (Y_t^{i, N})$. We need to introduce one last
    process: For times $0 \leqslant s \leqslant t$ and a random variable $X \in
    \mathbbm{R}^{2 N}$, independent of the filtration generated by $B_r$, $r
    \geqslant s$, let $Z^{X, N}_{t, s}$ be the process starting at time $s$ at the
    position $X$ and evolving from time $s$ up to time $t$ with the mean force
    $\overline{K}^N$. Put in another way, the process $Z^{X, N}_{t, s} = (Z^{X, 1,
    N}_{t, s}, \ldots, Z^{X, N, N}_{t, s})$ is given by the solution of
    \begin{equation}
      \mathd Z_{t, s}^{X, i, N} = \overline{K}^N_{t, i} (Z_{t, s}^{X, N}) \mathd t
      + \sqrt{2} \mathd B_t^i, \quad i = 1, \ldots, N, \quad Z^{X, N}_{s, s} = X.
      \label{microscopic-mean-mixed}
    \end{equation}
    This paper is organised as follows. In the next section we state our main
    result and the ensuing propagation of chaos. We comment on the existence and
    properties of solutions of equations (\ref{macroscopic-eq})-(\ref{microscopic-mean-mixed}) in Section \ref{properties-of-solutions}. Section
    \ref{preliminary-results} is devoted to some preliminary results that we need
    for the proof of the main result, Theorem \ref{main-result}, which is then
    proven in Section \ref{proof-main-theorem}. We conclude with the proofs of
    Propositions \ref{Linf-estimates} and \ref{hˆlder-estimates} introduced in
    Section \ref{properties-of-solutions}.
    
    \paragraph{Notation}For simplicity we write single bars $| \cdot |$ for norms
    in $\mathbbm{R}^n$ and $\| \cdot \|$ for norms in $L^p$ spaces.
    
    \section{Main result}
    
    Let the chemosensitivity $\chi$ and the initial density $\rho_0$ satisfy
    condition (\ref{condition-initial-data}), and for $N \in \mathbbm{N}$ let
    $X^N$ and $Y^N$ be the real and mean-field trajectories solving the
    regularised microscopic equations (\ref{microscopic-eq-regularised}) and
    (\ref{microscopic-mean-regularised}), respectively. Our main result is that
    the $N$-particle trajectory $X^N$ starting from a chaotic
    (product-distributed) initial condition $X^N_0 \sim \otimes^N_{i = 1} \rho_0$
    typically remains close to the purely chaotic mean-field trajectory $Y^N$ with
    same initial configuration $Y_0^N = X_0^N$ during any finite time interval
    $[0, T]$. More precisely, we prove that the measure of the set where the
    maximal distance $| X^N_t - Y^N_t |_{\infty}$ on $[0, T]$ excedes $N^{-
    \alpha}$ decreases exponentially with the number of particles $N$, as the
    number of particles grows to infinity.
    
    \begin{theorem}
      \label{main-result}Let $T > 0$ and $\alpha \in (0, 1 / 2)$. For each $\gamma
      > 0$, there exist a positive constant $C_{\gamma}$ and a natural number
      $N_0$ such that
      \[ \mathbbm{P} (\sup_{0 \leqslant t \leqslant T} | X^N_t - Y^N_t |_{\infty}
         \geqslant N^{- \alpha}) \leqslant C_{\gamma} N^{- \gamma}, \quad
         \text{for each } N \geqslant N_0 . \]
      The constant $C_{\gamma}$ depends on the coefficient $\chi$, the initial
      density $\rho_0$, the final time $T$, $\alpha$ and $\gamma$ and $N_0$
      depends on $\rho_0$, $T$ and $\alpha$.
    \end{theorem}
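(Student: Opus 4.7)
The plan is to adapt the mean-field coupling strategy of Boers-Pickl and Lazarovici-Pickl to the present stochastic regularised setting. Couple $X^N$ and $Y^N$ through the same family of Brownian motions and the same initial data, so that the stochastic integrals cancel from the difference and
\[ D^{i,N}_t \assign X^{i,N}_t - Y^{i,N}_t = \int_0^t \bigl( K^N_i(X^N_s) - \overline{K}^N_{s,i}(Y^N_s) \bigr) \mathd s. \]
Introduce the maximal deviation $J^N_t \assign \max_{1 \leqslant i \leqslant N} | D^{i,N}_t |$ and the stopping time $\tau^N \assign T \wedge \inf \{ t \geqslant 0 : J^N_t \geqslant N^{-\alpha} \}$, so that the theorem reduces to the estimate $\mathbbm{P}(\tau^N < T) \leqslant C_\gamma N^{-\gamma}$.

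Decompose the drift difference as $K^N_i(X^N_s) - \overline{K}^N_{s,i}(Y^N_s) = [K^N_i(X^N_s) - K^N_i(Y^N_s)] + [K^N_i(Y^N_s) - \overline{K}^N_{s,i}(Y^N_s)]$, separating a Lipschitz-type contribution from a mean-field fluctuation. The fluctuation term is a centred empirical average of the i.i.d.\ random vectors $k^N(Y^{i,N}_s - Y^{j,N}_s) - (k^N \ast \rho^N_s)(Y^{i,N}_s)$, whose $L^\infty$-norm is $O(N^\alpha)$ while the $L^2$-moment is only $O(\log N)$ once the $L^\infty$-bounds on $\rho^N_s$ proved in Section~\ref{properties-of-solutions} are inserted. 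A high-moment Bernstein/Rosenthal estimate, combined with a Kolmogorov-type time discretisation (using continuity of paths and boundedness of the drift), then shows that $\sup_{s \leqslant T, \, i} | K^N_i(Y^N_s) - \overline{K}^N_{s,i}(Y^N_s) |$ is of order $\widetilde O(N^{-1/2})$ except on an event of probability $O(N^{-\gamma})$.

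The Lipschitz term is the principal obstacle. Writing
\[ K^N_i(X^N_s) - K^N_i(Y^N_s) = -\frac{\chi}{N} \sum_{j \neq i} \int_0^1 \nabla k^N\bigl(\xi^{i,j}_s(u)\bigr) \bigl( D^{i,N}_s - D^{j,N}_s \bigr) \mathd u \]
with $\xi^{i,j}_s(u)$ an interpolation between $X^{i,N}_s - X^{j,N}_s$ and $Y^{i,N}_s - Y^{j,N}_s$ reduces the task to controlling the random weight $\tfrac{1}{N}\sum_j |\nabla k^N(\xi^{i,j}_s(u))|$. The naive bound $\|\nabla k^N\|_\infty \sim N^{2\alpha}$ is useless; instead, on $\{t \leqslant \tau^N\}$ this weight is comparable to $\tfrac{1}{N}\sum_j |\nabla k^N(Y^{i,N}_s - Y^{j,N}_s)|$, whose expectation $(|\nabla k^N| \ast \rho^N_s)(Y^{i,N}_s)$ is bounded by $\|\rho^N_s\|_\infty$ times a logarithmic factor. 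The key technical step — the "Lipschitz bound" item of the author's outline — is to upgrade this to a concentration statement: split $\sum_{j \neq i}$ at an intermediate length scale $R \in (N^{-\alpha}, 1)$, use $|\nabla k^N(x)| \leqslant (\pi |x|^2)^{-1}$ on the far pairs, and count the atypical close pairs $\{ j : |Y^{i,N}_s - Y^{j,N}_s | \leqslant R \}$ through another high-moment Bernstein estimate calibrated by $\| \rho^N_s \|_\infty \cdot N R^2$. The upshot is a stochastic effective Lipschitz constant $L_N$ of at most sub-polynomial size valid uniformly in $i, s$ on an event of probability $1 - O(N^{-\gamma})$.

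Combining both ingredients on the intersection of the good events yields, for $t \leqslant \tau^N$,
\[ J^N_t \leqslant L_N \int_0^t J^N_s \mathd s + \widetilde O(N^{-1/2}), \]
and Gronwall's inequality then forces $J^N_{t \wedge \tau^N} \ll N^{-\alpha}$ throughout $[0, T]$ when the parameters balance, i.e.\ $\tau^N = T$ on this intersection. A final union bound over the three bad events (failure of the fluctuation concentration, atypical close-pair counts at some $i, s$, and escape of the trajectory from the $N^{-\alpha}$-tube), each estimated via Markov's inequality at moment order $p = p(\gamma, \alpha, T)$ chosen sufficiently large, produces the advertised tail $C_\gamma N^{-\gamma}$ with $N_0$ the smallest $N$ for which all the underlying asymptotic inequalities become effective. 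The hard part is clearly the stochastic Lipschitz step, where the singularity of $\nabla k^N$ at the cutoff scale, the typical pair distances of the i.i.d.\ configuration $Y^N_s$ under $\rho^N_s$, and the admissible exponent range $\alpha \in (0, 1/2)$ must all be balanced simultaneously.
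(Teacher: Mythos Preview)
Your decomposition into a fluctuation term and a Lipschitz term, together with the high-moment law-of-large-numbers control of the fluctuation, matches the paper's Proposition~\ref{loln-general} and the set $\mathcal{B}^1_t$ in the proof. The gap is in the Lipschitz step and the Gr\"onwall conclusion.

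The random weight $\tfrac{1}{N}\sum_{j\neq i}|\nabla k^N(Y^{i,N}_s-Y^{j,N}_s)|$ has expectation $(|\nabla k^N|\ast\rho^N_s)(Y^{i,N}_s)$, and since $|\nabla k^N(x)|\sim |x|^{-2}$ on $N^{-\alpha}\leqslant|x|\leqslant 1$ this integral is genuinely of order $\alpha\log N$; no near/far splitting at an intermediate scale $R$ removes the logarithm, because the annulus $N^{-\alpha}\leqslant|x|\leqslant R$ already contributes $\int_{N^{-\alpha}}^{R}r^{-1}\,\mathd r\sim\log N$. Concentration only pins the empirical sum near this expectation, so the best stochastic Lipschitz constant you can obtain is $L_N\sim c\log N$ for some $c>0$ depending on $\chi$ and $\sup_t\|\rho^N_t\|_\infty$. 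Gr\"onwall then yields the amplification factor $\mathe^{L_N T}=N^{cT}$, and your inequality $J^N_{t\wedge\tau^N}\ll N^{-\alpha}$ holds only when $cT<\tfrac12-\alpha$, i.e.\ for short times. This is exactly the obstruction noted after the statement of Theorem~\ref{main-result}.

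The paper's remedy is the missing ingredient in your sketch. For $t-\tau$ larger than $2(\log N)^{-1}$ one introduces the intermediate process $Z^N_{t,s}$ (equation~(\ref{microscopic-mean-mixed}) started at $X^N_s$ at time $s=t-(\log N)^{-3/2}$) and compares $Z^N_{t,s}$ to $Y^N_t=Z^N_{t,0}$ not via trajectories but via their \emph{densities}: Lemma~\ref{lemma} and Corollary~\ref{corollary-to-lemma} show $\|u^{x,N}_{t,s}-u^{y,N}_{t,s}\|_\infty\leqslant C(t-s)^{-3/2}|x-y|_\infty$, which turns the long-time Lipschitz loss into a bound involving $\|k^N_1\|_1\leqslant(\log N)^{-3/2}$ after an additional splitting $K^N=K^N_1+K^N_2$ at a secondary cutoff $(\log N)^{-3/2}$. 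The net effect is that the effective Gr\"onwall exponent becomes $C_N=18(\log N)^{3/4}$ rather than $c\log N$, so $\mathe^{C_N T}$ is sub-polynomial and can be absorbed. Without this density-comparison step (or something equivalent) your argument closes only for $T$ below a fixed $\chi,\rho_0,\alpha$-dependent threshold, not for arbitrary $T>0$ as the theorem claims.
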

    
    Note that if the interaction force were Lipschitz continuous the statement
    would easily follow from a Gr{\"o}nwall-type argument. In our case we do not
    have this good property, but we can prove that the regularised force $K^N$ is
    locally Lipschitz with a bound of order $\log N$, which follows from Lemma
    \ref{lipschitz-bound-G} and the Law of large numbers as presented in
    Proposition \ref{loln-general}. This Lipschitz bound is good enough to prove
    the statement for short times but for larger times we need to introduce a new
    intermediate process. This process is proved to be close to $X^N_t$ by the
    same argument as before for short times and close to $Y^N_t$ by a new argument
    introduced in Lemma \ref{lemma} which compares the densities of the processes
    instead of comparing the trajectories.
    
    We remark that Theorem \ref{main-result} directly implies the propagation of
    chaos, or the weak convergence of the $k$-particle marginals for $X^N_t$ and
    $Y^N_t$:
    
    \begin{corollary}
      \label{main-corollary}Consider the probability density $\otimes_{i = 1}^N
      \rho_t^N$ of $Y_t^N$ and denote by $\Psi^N_t$ the probability density of
      $X^N_t$. Then, for each $\gamma > 0$, there exist a positive constant
      $C_{\gamma}$ and a natural number $N_0$ such that
      \begin{equation}
        \sup_{0 \leqslant t \leqslant T} W_1 (^{(k)} \Psi^N_t, \otimes_{i = 1}^k
        \rho_t^N) \leqslant C_{\gamma} N^{- \gamma} \label{wasserstein-k-marginals}
      \end{equation}
      holds for each $k \in \mathbbm{N}$ and $N \geqslant N_0$. $W_1$ denotes the
      first Wassertein distance, the constant $C_{\gamma}$ depends on the
      coefficient $\chi$, the initial density $\rho_0$, the final time $T$,
      $\alpha$ and $\gamma$ and $N_0$ depends on $\rho_0$, $T$ and $\alpha$. Here
      the constant $C_{\gamma}$ might be different from the one in Theorem
      \ref{main-result}.
    \end{corollary}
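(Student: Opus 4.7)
The plan is to exhibit an explicit coupling of ${}^{(k)}\Psi^N_t$ with $\otimes_{i=1}^k \rho_t^N$ and then estimate its transport cost. The natural coupling is already at hand: the processes $X^N$ and $Y^N$ are defined on the same probability space, sharing the initial data $X_0^N = Y_0^N$ and the Brownian motions $(B^i)_i$, so Theorem \ref{main-result} already controls the joint behaviour. The key observation is that because the drift in (\ref{microscopic-mean-regularised}) depends only on the single particle and the deterministic density $\rho_t^N$, and the Brownian motions and initial positions are independent, the components $Y_t^{1,N}, \dots, Y_t^{N,N}$ are genuinely i.i.d.\ with common law $\rho_t^N$. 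Hence the $k$-marginal of the law of $Y_t^N$ is exactly $\otimes_{i=1}^k \rho_t^N$, and projecting the synchronous coupling onto any $k$ coordinates yields a valid coupling between ${}^{(k)}\Psi^N_t$ and $\otimes_{i=1}^k \rho_t^N$.

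The second step is to bound the transport cost of this coupling. By the definition of $W_1$ (with the $\ell^1$ norm on $\mathbbm{R}^{2k}$, say) and by symmetry across particles,
\[ \sup_{0 \leqslant t \leqslant T} W_1({}^{(k)}\Psi^N_t, \otimes_{i=1}^k \rho_t^N) \leqslant k \, \mathbbm{E}\!\left[\sup_{0 \leqslant t \leqslant T} |X_t^N - Y_t^N|_{\infty}\right]. \]
I would then split this expectation using the event $A_N \assign \{\sup_{t \leqslant T} |X_t^N - Y_t^N|_{\infty} \geqslant N^{-\alpha}\}$. On $A_N^c$ the integrand is at most $N^{-\alpha}$. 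On $A_N$ I would apply Cauchy-Schwarz: combining the crude deterministic bound $|X_t^N - Y_t^N|_{\infty} \leqslant \sum_i (|X_t^{i,N}| + |Y_t^{i,N}|)$ with a uniform second-moment estimate $\sup_{t \leqslant T} \mathbbm{E}[|X_t^{i,N}|^2 + |Y_t^{i,N}|^2] \leqslant C_T$ (which follows from the moment conditions in (\ref{condition-initial-data}), the SDE structure and the Burkholder-Davis-Gundy inequality) yields $\mathbbm{E}[\sup_t |X_t^N - Y_t^N|_{\infty}^2] \leqslant C'_T N$, hence $\mathbbm{E}[\sup_t |X_t^N - Y_t^N|_{\infty} \mathbbm{1}_{A_N}] \leqslant \sqrt{C'_T N \cdot \mathbbm{P}(A_N)}$. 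For a prescribed target $\gamma$ in the corollary, it suffices to apply Theorem \ref{main-result} with exponent $\gamma' = 1 + 2\gamma$ so that the bad-event contribution is $O(N^{-\gamma})$.

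The whole statement therefore essentially reduces to Theorem \ref{main-result}: the only genuinely conceptual step is the first paragraph, where the i.i.d.\ structure of $Y^N$ identifies the product $\otimes_{i=1}^k \rho_t^N$ as the correct object to compare to. The only place where I would expect to spend any care is the uniform-in-$N$ second-moment bound; this should be a direct consequence of the well-posedness and moment-propagation results recalled in Section \ref{properties-of-solutions}, but even a crude polynomial bound $\mathbbm{E}[\sup_t |X_t^N - Y_t^N|_{\infty}^2] \leqslant C N^p$ would already suffice, at the cost of enlarging $\gamma'$ in the application of Theorem \ref{main-result}.
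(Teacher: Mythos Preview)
Your approach is correct and follows the same overall architecture as the paper: use the synchronous coupling $(X^N_t,Y^N_t)$ to bound $W_1$ by $\mathbbm{E}[\,|X^N_t-Y^N_t|\,]$, then split on the event $A_N=\{\sup_t|X^N_t-Y^N_t|_\infty\geqslant N^{-\alpha}\}$. The only substantive difference is in how the contribution on $A_N$ is handled. You propose Cauchy--Schwarz together with a polynomial second-moment bound; the paper instead exploits directly that $X^N$ and $Y^N$ are driven by the \emph{same} Brownian motions, so
\[
X^N_t-Y^N_t=\int_0^t\bigl(K^N(X^N_s)-\overline K^N_s(Y^N_s)\bigr)\,\mathd s,
\]
which is deterministically bounded by $T(\|K^N\|_\infty+\sup_s\|\overline K^N_s\|_\infty)\leqslant C T N^\alpha$. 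Multiplying by $\mathbbm{P}(A_N)$ and choosing the exponent in Theorem~\ref{main-result} large enough finishes the estimate without any moment computation. Your route works too (a polynomial-in-$N$ moment bound suffices, as you note), but the paper's shortcut is cleaner and sidesteps the need to control $\mathbbm{E}[\sup_t|X^{i,N}_t|^2]$ at all. One minor point: the paper uses the $\ell^\infty$ norm on $\mathbbm{R}^{2k}$ (via Kantorovich--Rubinstein), so no factor of $k$ appears and the constant $C_\gamma$ is genuinely independent of $k$, as claimed in the statement; your $\ell^1$ choice introduces a $k$ that you would need to remove to match the stated uniformity.
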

    
    \tmfoldedenv{\ }{The first Wassertein distance $W_1$ of two probability
    measures $\mu, \nu$ on $(\mathbbm{R}^n, \| \cdot \|_{\infty})$ is defined as
    \[ W_1 (\mu, \nu) \assign \inf_{\pi \in \Pi (\mu, \nu)} \int_{\mathbbm{R}^n
       \times \mathbbm{R}^n} \| x - y \|_{\infty} \mathd \pi (x, y), \]
    where $\Pi (\mu, \nu)$ is the set of all probability measures on
    $\mathbbm{R}^n \times \mathbbm{R}^n$ with first marginal $\mu$ and second
    marginal $\nu$. One can analogously define for $p \in [1, \infty)$ the $p$-th
    Wasserstein distance. Convergence in any Wasserstein distance implies weak
    convergence in $\mathcal{P} (\mathbbm{R}^n)$ (the set of probability measures
    on $(\mathbbm{R}^n, \| \cdot \|_{\infty})$). We next show that the result in
    Theorem \ref{main-result} is indeed stronger than (\ref{wasserstein-k-marginals}). To this end we use the Kantonovich-Rubinstein duality for the
    first Wasserstein distance
    \[ W_1 (\mu, \nu) = \sup_{\| g \|_L \leqslant 1} \left\{ \int g (x) \mathd \mu
       (x) - \int g (x) \mathd \nu (x) \right\}, \]
    where the supremum is taken over Lipschitz functions $g : \mathbbm{R}^n
    \rightarrow \mathbbm{R}$, for which the Lipschitz norm is given by $\| g \|_L
    \assign \sup_{x, y} \frac{| g (x) - g (y) |}{\| x - y \|_{\infty}} $. Let now
    $\gamma > 0$ and assume the statement in Theorem \ref{main-result} holds.
    Then, for each $t \in [0, T]$, the Wasserstein distance of $^{(k)} \Psi^N_t$
    and $\otimes_{i = 1}^k \rho_t^N$ (probability densities on $\mathbbm{R}^{2
    k}$) is
    \begin{eqnarray*}
      W_1 (^{(k)} \Psi^N_t, \otimes_{i = 1}^k \rho_t^N) & = & \sup_{\| g \|_L
      \leqslant 1} \left\{ \int_{\mathbbm{R}^{2 k}} g (x) ^{(k)} \Psi^N_t (x)
      \mathd x - \int_{\mathbbm{R}^{2 k}} g (x) \otimes_{i = 1}^k \rho_t^N (x)
      \mathd x \right\}\\
      & = & \sup_{\| g \|_L \leqslant 1} \int \{ g (X^{1, N}_t, \ldots, X^{k,
      N}_t) - g (Y^{1, N}_t, \ldots, Y^{k, N}_t) \} \mathd \mathbbm{P}\\
      & \leqslant & \int \| (X^{1, N}_t, \ldots, X^{k, N}_t) - (Y^{1, N}_t,
      \ldots, Y^{k, N}_t) \|_{\infty} \mathd \mathbbm{P}
    \end{eqnarray*}
    for $k \in \{ 1, \ldots, N \}$. Let us define the set $A \assign \{ \sup_{0
    \leqslant t \leqslant T} \| X^N_t - Y^N_t \|_{\infty} \geqslant N^{- \alpha}
    \}$ and compute the last integral over $\Omega \backslash A$ and over $A$
    separately. On the complementary set of $A$ the integral is just
    \[ \int_{\Omega \backslash A} \| (X^{1, N}_t, \ldots, X^{k, N}_t) - (Y^{1,
       N}_t, \ldots, Y^{k, N}_t) \|_{\infty} \mathd \mathbbm{P} \leqslant N^{-
       \alpha} . \]
    On the other hand, $\int_A \| (X^{1, N}_t, \ldots, X^{k, N}_t) - (Y^{1, N}_t,
    \ldots, Y^{k, N}_t) \|_{\infty} \mathd \mathbbm{P}$ is bounded by
    \begin{eqnarray*}
      \int_A \| X^N_t - Y^N_t \|_{\infty} \mathd \mathbbm{P} & = & \int_A \left\|
      \int^t_0 K^N (X^N_s) - \overline{K}^N_s (Y^N_s) \mathd s \right\|_{\infty}
      \mathd \mathbbm{P}\\
      & \leqslant & t (\| K^N \|_{\infty} + \sup_{0 \leqslant s \leqslant t} \|
      \overline{K}^N_s \|_{\infty}) \mathbbm{P} (A)\\
      & \leqslant & T (2 \pi)^{- 1} N^{\alpha} C_{2 \alpha} N^{- 2 \alpha}\\
      & \leqslant & CN^{- \alpha},
    \end{eqnarray*}
    for a constant $C$ depending on $\chi$, $\rho_0$, $T$ and $\alpha$. We
    conclude that
    \[ W_1 (^{(k)} \Psi^N_t, \otimes_{i = 1}^k \rho_t^N) \leqslant CN^{- \alpha}
    \]
    holds for each $t \in [0, T]$ and, since the bound does not depend on $t$ but
    only on the final time $T$, the same bound holds after taking the supremum
    over $0 \leqslant t \leqslant T$.
    
    \tmcolor{red}{A result comparing $\Psi^N_t$ and $\otimes_{i = 1}^N \rho_t$
    would be much more interesting (the latter being the solution of Keller-Segel
    without cutoff). We know that $\rho_t^N$ converges weakly to $\rho_t$ by a
    compactness argument (see Proposition \ref{macro-existence-and-convergence}),
    but we don't know the rate of convergence.}}
    
    \section{Properties of solutions}\label{properties-of-solutions}
    
    \subsection{Macroscopic equations}
    
    \begin{proposition}
      \label{macro-existence-and-convergence} {\tmem{(Existence and convergence)}}
      Under assumption (\ref{condition-initial-data}) for the chemosensitivity
      $\chi$ and the initial density $\rho_0$ the following holds:
      \begin{enumerateroman}
        \item For any $N \in \mathbbm{N}$ and any $T > 0$, there exists $\rho^N
        \in L^2 (0, T ; H^1 (\mathbbm{R}^2)) \cap C (0, T ; L^2 (\mathbbm{R}^2))$
        which solves (\ref{macroscopic-eq-regularised}) in the sense of
        distributions.
        
        \item The Keller-Segel equation (\ref{macroscopic-eq}) has a unique weak
        non-negative solution $\rho \in L^{\infty} \left( \mathbbm{R}_+ ;
        {\nobreak} L^1 (\mathbbm{R}^2) \right)$ satisfying the conservation of
        mass
        \[ \int_{\mathbbm{R}^2} \rho \mathd x = \int_{\mathbbm{R}^2} \rho_0 \mathd
           x \quad (\equallim 1), \]
        the second moment equation
        \[ \int_{\mathbbm{R}^2} \rho (t, x)  | x |^2 \mathd x = 4 \left( 1 -
           \frac{\chi}{8 \pi} \right) t + \int_{\mathbbm{R}^2} \rho_0 (x)  | x |^2
           \mathd x \]
        and the free energy inequality
        \[ \mathcal{F} [\rho (t)] + \int_0^t \int_{\mathbbm{R}^2} \rho \left|
           \nabla (\log \rho) + \chi  (k \ast \rho) \right|^2 \mathd x \mathd s
           \leqslant \mathcal{F} [\rho_0], \]
        where the free energy $\mathcal{F}$ is given by
        \[ \mathcal{F} [\rho] \assign \int_{\mathbbm{R}^2} \rho \log \rho \mathd x
           - \frac{\chi}{2} \int_{\mathbbm{R}^2} \rho (\phi \ast \rho) \mathd x.
        \]
        \item The sequence $(\rho^N)$ of solutions of (\ref{macroscopic-eq-regularised}) converges weakly to the solution $\rho$ of the Keller-Segel
        equation (\ref{macroscopic-eq}).
      \end{enumerateroman}
    \end{proposition}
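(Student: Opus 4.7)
The plan is to address the three items in sequence. Parts (i) and (iii) exploit that the cutoff kernel $k^N$ is smooth and bounded (with $\|k^N\|_\infty \leq CN^\alpha$ and $\|\nabla k^N\|_\infty \leq CN^{2\alpha}$), while (ii) rests on the classical sub-critical Keller--Segel theory developed in \cite{blanchet_two-dimensional_2006} and \cite{dolbeault_optimal_2004}. For (i), I would fix $N$ and run a fixed-point iteration in $C([0,T_*]; L^2\cap L^\infty)$ for small $T_*>0$: given an iterate $\bar\rho$, form the Lipschitz drift $u \assign \chi(k^N\ast\bar\rho)$, solve the linear parabolic equation $\partial_t\rho = \Delta\rho + \nabla\cdot(u\rho)$ via the heat-kernel Duhamel formula, and show contraction by standard parabolic estimates. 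Iterating covers $[0,T]$; positivity is preserved by the maximum principle applied in divergence form, and the regularity $L^2(0,T;H^1)\cap C(0,T;L^2)$ follows from energy estimates obtained by testing against $\rho^N$.

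For (ii), I would invoke the classical existence and uniqueness result and verify the three identities directly. Mass conservation is immediate by integrating the equation. For the second moment, testing against $|x|^2$ yields $\int\Delta\rho\,|x|^2\,\mathd x = 4$ after integration by parts; the drift contribution symmetrises via $(x,y)\leftrightarrow(y,x)$ as
\[
-2\chi\int x\cdot(k\ast\rho)\rho\,\mathd x = -\chi\iint \frac{(x-y)\cdot(x-y)}{2\pi|x-y|^2}\,\rho(x)\rho(y)\,\mathd x\,\mathd y = -\frac{\chi}{2\pi},
\]
giving the coefficient $4(1-\chi/(8\pi))$. The free energy inequality reflects the gradient-flow structure: since $k=-\nabla\phi$ with $\phi(x)=-\frac{1}{2\pi}\log|x|$, the equation reads $\partial_t\rho = \nabla\cdot(\rho\nabla(\log\rho-\chi\phi\ast\rho))$, so that formally $\frac{\mathd}{\mathd t}\mathcal{F}[\rho] = -\int\rho\,|\nabla\log\rho + \chi\,k\ast\rho|^2\,\mathd x$; the inequality follows by a standard mollification argument on weak solutions. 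The logarithmic Hardy--Littlewood--Sobolev inequality combined with $\chi<8\pi$ keeps $\mathcal{F}[\rho]$ bounded from below and rules out blow-up, delivering global existence and uniqueness in the stated class.

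For (iii), I would lean on the uniform-in-$N$ $L^\infty$ estimates of Proposition \ref{Linf-estimates} and the Hölder bounds of Proposition \ref{hˆlder-estimates}; combined with the equation these also bound $\partial_t\rho^N$ in a suitable negative Sobolev norm. The Aubin--Lions lemma then extracts a subsequence of $(\rho^N)$ converging strongly in $L^p_{\mathrm{loc}}([0,T]\times\mathbbm{R}^2)$ for every $p<\infty$ and weakly in $L^2(0,T;H^1)$. The delicate step is passing to the limit in the nonlinear term $(k^N\ast\rho^N)\rho^N$: since $k^N=k$ outside $B(0,2N^{-\alpha})$ and $k\in L^p_{\mathrm{loc}}$ for every $p<2$, the uniform $L^\infty$ control on $\rho^N$ forces $k^N\ast\rho^N\to k\ast\rho$ locally in $L^p$, which pairs with the strong $L^p_{\mathrm{loc}}$ convergence of $\rho^N$ to identify the limit of the product. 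The limit therefore solves (\ref{macroscopic-eq}) in the distributional sense, and uniqueness from (ii) promotes subsequential convergence to convergence of the whole sequence.

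The main obstacle is precisely this last passage to the limit: controlling the singular convolution $k^N\ast\rho^N$ against a strong $L^p$ limit of the density is only possible thanks to the uniform $L^\infty$-bound of Proposition \ref{Linf-estimates}, itself ultimately a consequence of the sub-critical condition $\chi<8\pi$ via the free energy and log-HLS machinery underpinning part (ii).
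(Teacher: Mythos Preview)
Your sketch is essentially sound, but you should be aware that the paper does not actually prove this proposition: it simply refers to \cite{blanchet_two-dimensional_2006} for the existence of the regularised solutions $\rho^N$, the existence of a weak solution $\rho$, and the subsequential convergence $\rho^N\rightharpoonup\rho$, and to \cite{fernandez_uniqueness_2016} for uniqueness of $\rho$; the two together upgrade subsequential to full convergence. The paper moreover stresses that for this proposition only $\rho_0\in L^1(\mathbbm{R}^2,(1+|x|^2)\,\mathd x)$ with finite entropy is needed, not the full $L^\infty\cap H^2$ hypothesis.

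Your route for (iii) is genuinely different from the cited one. Blanchet--Dolbeault--Perthame obtain compactness from the free-energy dissipation (equi-integrability, bounds on $\nabla\sqrt{\rho^N}$ in $L^2$, Dunford--Pettis/Aubin--Lions at the $L^1$ level), which is why no $L^\infty$ bound on $\rho_0$ is required. You instead invoke Propositions~\ref{Linf-estimates} and~\ref{hˆlder-estimates} to get uniform $L^\infty$ and H\"older control and then run Aubin--Lions at the $L^p$ level. This is logically consistent---the $L^\infty$ and H\"older bounds for $\rho^N$ depend only on part~(i), not on~(iii), so there is no circularity---but it is a forward reference and it needs the stronger assumption $\rho_0\in L^\infty\cap H^2$. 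What you gain is a simpler passage to the limit in the nonlinear term (strong $L^p_{\mathrm{loc}}$ convergence of $\rho^N$ paired with $k^N\ast\rho^N\to k\ast\rho$ in $L^p_{\mathrm{loc}}$); what the BDP argument buys is the result under minimal hypotheses and without relying on later propositions.
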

    
    We refer to {\cite{blanchet_two-dimensional_2006}} and
    {\cite{fernandez_uniqueness_2016}} for the proof. More precisely, the
    existence of the sequence $\rho^N$ and the weak convergence of a subsequence
    of $\rho^N$ to a weak solution of the Keller-Segel equation (\ref{macroscopic-eq}) were proved in {\cite{blanchet_two-dimensional_2006}}. Together with the
    uniqueness of the weak solution $\rho$ of (\ref{macroscopic-eq}), which was
    proved in {\cite{fernandez_uniqueness_2016}}, it follows the weak convergence
    of the whole sequence $\rho^N$ (and not just a subsequence) to this unique
    solution $\rho$.
    
    For the proof of Proposition \ref{macro-existence-and-convergence} only
    $\rho_0 \in L^1 (\mathbbm{R}^2, (1 + | x |^2) \mathd x)$, and not $\rho_0 \in
    L^1 \left( \mathbbm{R}^2, \left( 1 + {\nobreak} | x |^2 \right) \mathd x
    \right) \cap L^{\infty} (\mathbbm{R}^2) \cap H^2 (\mathbbm{R}^2)$ as required
    in condition (\ref{condition-initial-data}), is necessary. If moreover the
    initial density is bounded in $L^{\infty}$ we find in Proposition \ref{Linf-estimates} that the solutions of the Keller-Segel and the regularised
    Keller-Segel equations are uniformly bounded in $L^{\infty}$ as well. Finally
    with the full condition $\rho_0 \in L^1 (\mathbbm{R}^2, (1 + | x |^2) \mathd
    x) \cap L^{\infty} (\mathbbm{R}^2) \cap H^2 (\mathbbm{R}^2)$ we prove some
    H{\"o}lder estimates in Proposition \ref{hˆlder-estimates}. The proofs of
    these two last propositions are contained in Section \ref{proofs-of-estimates}.
    
    \begin{proposition}
      \label{Linf-estimates}{\tmem{($L^{\infty}$ estimates)}} Assume $\chi$ and
      $\rho_0$ satisfy condition (\ref{condition-initial-data}). Then for each $T
      > 0$ there exists a positive constant $C$ such that
      \[ \sup_{t \in [0, T]} \| \rho^N_t \|_{\infty}, \sup_{t \in [0, T]} \|
         \rho_t \|_{\infty} \leqslant C \]
      holds for the solution $\rho^N$ of (\ref{macroscopic-eq-regularised}) and
      the solution $\rho$ of (\ref{macroscopic-eq}).
    \end{proposition}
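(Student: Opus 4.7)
The plan is to prove the uniform-in-$N$ bound for $\rho^N$ first and then deduce the bound for $\rho$ by weak-$*$ lower semicontinuity: once $\sup_N\sup_{t\in[0,T]}\|\rho^N_t\|_\infty <\infty$ is in hand, Banach--Alaoglu extracts a weak-$*$ convergent subsequence in $L^\infty([0,T]\times\mathbbm{R}^2)$ whose limit satisfies the same bound, and the uniqueness statement in Proposition \ref{macro-existence-and-convergence}(ii) identifies this limit with $\rho$.

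For $\rho^N$ itself, the plan is to reproduce the classical sub-critical argument for Keller-Segel (cf.\ \cite{blanchet_two-dimensional_2006}) while tracking that all constants remain independent of $N$. The regularised potential $\phi^N = \phi^1(N^\alpha\,\cdot)$ retains the structural features exploited in that argument: it is radial, $-\Delta\phi^N\geq 0$, and (as one checks by the divergence theorem from the Newtonian form of $\phi^1$ at infinity) $\|-\Delta\phi^N\|_1=1$ uniformly in $N$. The first step is to establish uniform-in-$N$ analogues of the three estimates in Proposition \ref{macro-existence-and-convergence}(ii) for $\rho^N$: mass conservation is immediate; the second-moment identity follows from testing (\ref{macroscopic-eq-regularised}) against $|x|^2$ and using the oddness of $k^N$; and the free energy inequality, obtained by testing against $\log\rho^N-\chi\,\phi^N\ast\rho^N$, combined with the logarithmic Hardy--Littlewood--Sobolev inequality (in its sharp form), yields a uniform entropy bound $\sup_{t\in[0,T]}\int \rho^N_t\log\rho^N_t\,\mathd x < \infty$. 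The threshold $\chi<8\pi$ enters precisely as the condition under which log-HLS produces a one-sided, non-circular estimate for the free energy.

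The second step is to bootstrap this $L\log L$ control to $L^\infty$ by a Moser-type iteration. Testing (\ref{macroscopic-eq-regularised}) against $p(\rho^N)^{p-1}$, integrating by parts, and applying Young's convolution inequality to the interaction term (using $\|-\Delta\phi^N\|_1=1$) gives the differential inequality
\[
\frac{\mathd}{\mathd t}\|\rho^N_t\|_p^p + \frac{4(p-1)}{p}\|\nabla (\rho^N_t)^{p/2}\|_2^2 \;\leq\; (p-1)\chi\,\|\rho^N_t\|_{p+1}^{p+1},
\]
which is formally identical to the one satisfied in the unregularised case. The 2D Gagliardo--Nirenberg inequality then controls $\|\rho^N_t\|_{p+1}^{p+1}$ by a fraction of the dissipation plus a lower-order term in which the entropy bound from the previous step, rather than mere mass, is what keeps the coefficient admissible; this closes the estimate for each fixed $p$, and iterating with geometrically growing $p$ produces the uniform $L^\infty$ bound.

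The step I expect to be the main obstacle is precisely this last iteration: one must check that the constants arising at each stage, which depend on $p$ and on the Gagliardo--Nirenberg interpolation exponents, do not compound to diverge as $p\to\infty$, and that the entropy bound is strong enough to absorb the nonlinear term at every level. Keeping all of this uniform in $N$ simultaneously is the delicate bookkeeping, but no genuinely new idea beyond the $N$-independence of $\|-\Delta\phi^N\|_1$ and of the entropy bound should be required.
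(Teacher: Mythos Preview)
Your proposal is correct and follows the same overall two-step strategy as the paper: secure uniform-in-$N$ $L^p$ bounds first, then Moser-iterate to $L^\infty$, and finally pass to the limit to get the bound for $\rho$ (the paper says exactly this: ``one can justify them by performing the proof for the solutions $\rho^N$ \ldots and then passing to the limit''). The tactical details of the iteration differ. The paper delegates Step~1 to \cite{blanchet_two-dimensional_2006,fernandez_uniqueness_2016} (whose proofs are essentially your entropy-plus-log-HLS argument), and in Step~2 it follows \cite{kowalczyk_preventing_2005,calvez_volume_2006}: it works with the truncation $\rho_m=(\rho-m)_+$ and, crucially, first uses the $L^p$ bounds to control $\|k\ast\rho\|_\infty$; this turns the interaction into a first-order term with bounded coefficient, after which one obtains a clean recursion $S_j\leq C\max\{\|\rho_0\|_\infty^{2^j},\,2^{2j}S_{j-1}^2+1\}$ for $S_j=\sup_t\int\rho_m^{2^j}$, and taking $2^{-j}\log_+ S_j$ telescopes. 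Your outline instead keeps $\rho$ untruncated and routes the interaction through $\|-\Delta\phi^N\|_1=1$ and Young's inequality to reach the $\|\rho\|_{p+1}^{p+1}$ right-hand side; this is the J\"ager--Luckhaus form, and closing it requires interpolating Gagliardo--Nirenberg down to a norm strictly above $L^1$ so that the gradient exponent stays below~$2$---which is precisely the role you correctly assign to the entropy bound. Both variants are standard; the paper's buys slightly simpler bookkeeping because the truncation confines everything to the finite-measure set $\{\rho\geq m\}$, while yours makes the $N$-uniformity (via $\|-\Delta\phi^N\|_1=1$) more visibly structural.
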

    
    \begin{proposition}
      \label{hˆlder-estimates}{\tmem{(H{\"o}lder estimates)}} Assume $\chi$ and
      $\rho_0$ satisfy condition (\ref{condition-initial-data}). Then for each $T
      > 0$ there exist positive constants $C_1$ and $C_2$ depending on $\rho_0$
      and $T$, such that for each $N \in \mathbbm{N}$ and $t \in [0, T]$ the
      following estimates hold for the solution $\rho^N$ of (\ref{macroscopic-eq-regularised}) and the solution $\rho$ of (\ref{macroscopic-eq}):
      \begin{enumerateroman}
        \item \label{hˆlder}$[\rho^N (t)]_{0, \alpha}, [\rho (t)]_{0, \alpha}
        \leqslant C_1$,
        
        \item \label{lipschitz}$[k^N \ast \rho^N (t)]_{0, 1}, [k \ast \rho
        (t)]_{0, 1} \leqslant C_2$.
      \end{enumerateroman}
    \end{proposition}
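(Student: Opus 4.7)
The plan is to work from the mild (Duhamel) formulation of the regularised equation combined with the $L^\infty$ bounds of Proposition~\ref{Linf-estimates}. Writing $G_\tau$ for the two-dimensional heat kernel, one has
\[
\rho^N_t = G_t \ast \rho_0 - \chi \int_0^t \nabla G_{t-s} \ast \bigl((k^N \ast \rho^N_s)\, \rho^N_s\bigr)\, \mathd s,
\]
and analogously for $\rho$. Since $|k^N| \leq |k|$ pointwise, the standard 2D splitting $|(k \ast f)(x)| \leq \|f\|_\infty \int_{|z| \leq 1}(2\pi|z|)^{-1}\,\mathd z + (2\pi)^{-1}\|f\|_1$ together with Proposition~\ref{Linf-estimates} yields a uniform $L^\infty$ bound on the nonlinearity $(k^N \ast \rho^N_s)\,\rho^N_s$, independent of $N$ and of $s \in [0,T]$. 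The initial contribution $G_t \ast \rho_0$ is H\"older in space uniformly in $t$ because condition (\ref{condition-initial-data}) forces $\rho_0 \in H^2(\mathbbm{R}^2) \hookrightarrow C^{0,\alpha}(\mathbbm{R}^2)$ for every $\alpha \in (0,1)$.

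For part (i), I would control the Duhamel increment at two points with $r := |x-y|$ by splitting the time integral at $t-s = r^2$: on $\{t-s \leq r^2\}$ I use $\|\nabla G_{t-s}\|_1 \leq C(t-s)^{-1/2}$, while on $\{t-s > r^2\}$ I apply the mean value theorem together with $\|\nabla^2 G_{t-s}\|_1 \leq C(t-s)^{-1}$. After integration these yield a modulus of continuity of the form $Cr(1+|\log r|)$ for the Duhamel term, which is dominated by $r^\alpha$ for every $\alpha \in (0,1)$ and hence controls $[\rho^N_t]_{0,\alpha}$ uniformly in $N$ and $t \in [0,T]$; the same argument applied to the unregularised equation yields the bound on $\rho$.

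For part (ii), with part (i) in hand, I would bound $|(k^N \ast \rho^N_t)(x) - (k^N \ast \rho^N_t)(y)|$ by decomposing the integration in $z$ into three regions: $\{|z-x| \leq 2r\}$, $\{2r < |z-x| \leq R\}$ and $\{|z-x| > R\}$. The near piece contributes $O(r)$ via $|k^N| \leq (2\pi|\cdot|)^{-1}$ and $\|\rho^N_t\|_\infty \leq C$; the tail contributes $O(r/R^2)$ via the mean value estimate $|k^N(x-z) - k^N(y-z)| \leq Cr/|x-z|^2$ and $\|\rho^N_t\|_1 = 1$; the middle piece is handled by writing $\rho^N_t(z) = \rho^N_t(x) + (\rho^N_t(z) - \rho^N_t(x))$, treating the first summand by the oddness cancellation of $k^N$ on near-symmetric annuli (contributing only $O(r)$ from a thin boundary layer of width $r$), and the second summand by the H\"older estimate from (i), which turns the borderline $|x-z|^{-2}$ into an integrable $[\rho^N_t]_{0,\alpha}\,|x-z|^{\alpha-2}$. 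Fixing $R$ finite then produces a bound of order $r$ uniform in $N$, and the same computation with $k, \rho$ gives the estimate on $k \ast \rho$. The main obstacle is precisely this borderline logarithm: a Lipschitz bound on $k \ast \rho$ from an $L^\infty$ density alone would fail by a log, so (i) must be proved before (ii) in order to exploit the H\"older gain together with the odd-kernel cancellation, and this is where the full strength $\rho_0 \in H^2(\mathbbm{R}^2)$ in (\ref{condition-initial-data}) is used.
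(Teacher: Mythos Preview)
Your argument is correct, but for part~(i) it follows a genuinely different route from the paper. The paper does not work with the Duhamel formula and heat-kernel increments; instead it writes the equation as a forced heat equation $\partial_t \rho - \Delta \rho = \chi\rho^2 + \chi(k\ast\rho)\cdot\nabla\rho$, invokes maximal $L^p$-regularity (first in $L^2$, then bootstrapped to $L^p$ for $2<p\leqslant 4$, using $\rho_0\in H^2(\mathbbm{R}^2)\subseteq W^{2-2/p,p}(\mathbbm{R}^2)$ for the initial trace), and finally applies Morrey's inequality in the space-time variables $(t,x)\in\mathbbm{R}^3$ to conclude $\rho,\rho^N\in C^{0,\alpha}((0,T)\times\mathbbm{R}^2)$ for $0<\alpha\leqslant 1/4$. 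Your direct splitting of the time integral at $t-s=r^2$ is more elementary (it avoids the maximal regularity machinery and the Sobolev/Besov trace embeddings collected in Remark~\ref{embeddings}) and in fact yields a sharper spatial modulus of continuity, namely log-Lipschitz and hence every $\alpha<1$, whereas the paper's Morrey argument stops at $\alpha=1/4$; on the other hand the paper's route also delivers H{\"o}lder regularity in time, which you do not obtain.

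For part~(ii) the two proofs are the same in spirit: both exploit the H{\"o}lder regularity from~(i) to tame the borderline $|x-z|^{-2}$ singularity of $Dk$, using the add--and--subtract--$\rho(x)$ trick together with the oddness of $k$. The only cosmetic difference is that the paper bounds the second derivatives $\partial_{ij}(\phi\ast\rho)$ directly (the classical Schauder/singular-integral estimate), while you estimate the increment $(k\ast\rho)(x)-(k\ast\rho)(y)$ via a three-region decomposition; these are equivalent formulations of the same computation.
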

    
    \subsection{Microscopic equations}
    
    We focus first on the interacting $N$-particle system (\ref{particle-system})
    and its regularised version (\ref{microscopic-eq-regularised}). Since for each
    $N > 0$ the kernel of (\ref{microscopic-eq-regularised}) is globally Lipschitz
    continous, the solution of (\ref{microscopic-eq-regularised}) is strongly and
    uniquely well-defined. For the original singular situation (\ref{particle-system}) it is much more delicate. Cattiaux and P{\'e}d{\`e}ches
    {\cite[Theorem 1.5]{cattiaux_2-d_2016}} proved the following existence and
    uniqueness result:
    
    \begin{proposition}
      {\tmem{(Existence and uniqueness) }}Let $\mathcal{M} \assign \left\{ \right.$ there exists at most one pair $i \neq j$ such that $\left. X^i = X^j \right\}$. Then, for $N \geqslant 4$ and $\chi < 8 \pi \left( 1 - \frac{1}{N
      - 1} \right)$ there exists a unique (in distribution) non explosive solution
      of (\ref{particle-system}) starting from any $x \in \mathcal{M}$.
    \end{proposition}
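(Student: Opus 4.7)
The plan is to establish existence by a regularisation-and-compactness argument, and uniqueness in law by a martingale-problem analysis, with Bessel-type estimates on inter-particle distances providing the crucial control on the singular set. For each $\varepsilon > 0$, one replaces $k$ by a smooth truncation $k^{\varepsilon}$ (for instance the kernel $k^N$ of this paper with $N^{-\alpha}$ replaced by $\varepsilon$) and considers the resulting regularised $N$-particle SDE. Since its drift is globally Lipschitz, there is a unique strong non-explosive solution $X^{\varepsilon}$. An Itô computation on $\sum_i |X_t^{\varepsilon,i}|^2$ combined with the skew-symmetry of the pair interaction gives a second-moment bound uniform in $\varepsilon$, and hence tightness of the laws of $X^{\varepsilon}$ on $C([0,T]; \mathbbm{R}^{2N})$; let $X$ be any weak subsequential limit.

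The main obstacle is showing that $X_t \in \mathcal{M}$ for all $t \in [0,T]$ almost surely, i.e.\ no triple collisions occur and pair collisions occur only on a Lebesgue-null set of times. The 2D Coulomb structure is exploited via Itô's formula on the squared distance $R_{ij}^2 \assign |X^{\varepsilon,i} - X^{\varepsilon,j}|^2$: isolating the explicit pair contribution $-\frac{2\chi}{N} k(X^{\varepsilon,i} - X^{\varepsilon,j})$ and using $x \cdot k(x) = (2\pi)^{-1}$ gives
\[
\mathd R_{ij}^2 = \Bigl(4 - \tfrac{2\chi}{\pi N}\Bigr)\mathd t + \text{smooth drift from the other particles} + \text{martingale}.
\]
Thus, in isolation, $R_{ij}^2$ behaves as a squared Bessel process of dimension $4 - \tfrac{2\chi}{\pi N} \geqslant 2$ for any admissible $\chi$, which never reaches $0$. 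The delicate part is ruling out collisions of $k \geqslant 3$ particles: applying Itô to the total squared spread $S_k \assign \sum_{l < m \leqslant k} R_{i_l i_m}^2$ of a subset of $k$ particles produces an effective Bessel dimension whose positivity, after the combinatorial counting of pairs, corresponds exactly to the threshold $\chi < 8\pi(1 - 1/(N-1))$. A Feller-test comparison with a squared Bessel process then shows that $S_k$ does not reach $0$ on $[0,T]$ uniformly in $\varepsilon$, which passes to the limit and confirms $X_t \in \mathcal{M}$ almost surely.

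For uniqueness in law, I would formulate the problem as a martingale problem for the generator $L = \Delta - \frac{\chi}{N} \sum_{i \neq j} k(x_i - x_j) \cdot \nabla_{x_i}$ acting on smooth test functions compactly supported away from the triple-collision set. Since the Bessel analysis shows that $\mathcal{M}$ is invariant and that the diagonal $\{X^i = X^j\}$ for a single pair is visited only on a null set of times, the drift is in fact smooth $\mathbbm{P}$-a.e.\ in time along paths. Weak uniqueness then follows by a localisation argument: on the stopping times where all pairwise distances exceed $\eta > 0$ the drift is Lipschitz and strong uniqueness is classical; letting $\eta \downarrow 0$ and controlling the residual time via the squared-Bessel estimates identifies the law uniquely and independently of the regularisation $k^{\varepsilon}$. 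The hardest technical point is the multi-particle collision step, since the estimate must be uniform in $\varepsilon$ and must convert the per-pair threshold into the sharp finite-$N$ correction $8\pi(1 - 1/(N-1))$ via the right choice of Lyapunov quantity on subsets of $k$ particles.
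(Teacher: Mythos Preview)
The paper does not prove this proposition at all: it is stated as a quotation of \cite[Theorem~1.5]{cattiaux_2-d_2016} (Cattiaux and P{\'e}d{\`e}ches), with no argument given. So there is nothing in the paper to compare your attempt against; whatever you write is necessarily ``different'' from the paper's treatment, which consists of a single citation.

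That said, your sketch is in the right neighbourhood of how the cited result is actually obtained. The core mechanism---It\^o's formula on pair distances and on the dispersion $\sum_{l<m}|X^{i_l}-X^{i_m}|^2$ of a sub-cluster, followed by comparison with a squared Bessel process whose dimension stays above $2$ precisely when $\chi<8\pi(1-1/(N-1))$---is exactly the Bessel heuristic that drives the sharp threshold. Where your outline is thin: (i) tightness in $C([0,T];\mathbbm{R}^{2N})$ does not follow from a second-moment bound alone; you need control on increments (e.g.\ Kolmogorov's criterion) uniformly in $\varepsilon$, and the singular drift makes this nontrivial near the diagonal; (ii) the uniqueness step is the hardest part and your localisation argument glosses over the fact that two-particle collisions \emph{do} occur (the squared Bessel of dimension in $(0,2)$ hits $0$), so the drift is genuinely singular on a set of times of positive capacity even if of zero Lebesgue measure---``smooth a.e.\ in time'' is not enough to run a classical martingale-problem uniqueness argument. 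Cattiaux and P{\'e}d{\`e}ches handle this via Dirichlet-form techniques and a Girsanov-type identification rather than a direct localisation. If you intend to supply a self-contained proof here rather than cite, these two points are where real work is needed.
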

    
    We continue with the mean-field $N$-particle system (\ref{microscopic-mean}),
    its regularised version (\ref{microscopic-mean-regularised}) and its
    regularised and linearised version (\ref{microscopic-mean-mixed}). According
    to Proposition \ref{hˆlder-estimates} the mean-field force $\overline{K}^N$ is
    Lipschitz in the space variable, uniformly in $t \in [0, T]$ and $N \in
    \mathbbm{N}$. Therefore, the linear equation (\ref{microscopic-mean-mixed})
    has a unique strong solution. For the existence and uniqueness of solutions of
    the non-linear equations (\ref{microscopic-mean}) and (\ref{microscopic-mean-regularised}) we refer to {\cite[Theorem 2.2]{liu_random_2015}}.
    
    \section{Preliminary results}\label{preliminary-results}
    
    \subsection{Local Lipschitz bound for the regularised interaction force}
    
    The regularised interaction force $K^N$ defined in (\ref{regularised-force})
    is locally Lipschitz, with a local Lipschitz bound depending on $N$. The proof
    of this statement is conducted in the following Lemma, which is formulated to
    include more general cutoffs that we will need to consider in this paper.
    
    \begin{lemma}
      \label{lipschitz-bound-G}Let $\nu = \nu (N)$ be a monotone increasing
      function of $N$ with $\lim_{N \rightarrow \infty} \nu (N) = \infty$, and
      consider the force $k^{\nu}$ with cutoff at $\nu^{} (N)^{- 1}$, $k^{\nu} (x)
      \assign - \nabla (\phi^1 (\nu^{} x))$ for the bump function $\phi^1$ defined
      in Section \ref{equations}, meaning in particular that $k^{\nu} (x)
      \leqslant (2 \pi | x |)^{- 1}$ and
      \[ k^{\nu} (x) = \left\{ \begin{array}{ll}
           \frac{x}{2 \pi | x |^2}, & | x | \geqslant 2 \nu^{- 1},\\
           0, & | x | \leqslant \nu^{- 1}
         \end{array} \right. . \]
      \begin{enumerateroman}
        \item \label{lipschitz-bound-2D}For each $x, y \in \mathbbm{R}^2$ with $|
        x - y | \leqslant 2 \nu^{- 1}$ it holds
        \[ | k^{\nu} (x) - k^{\nu} (y) | \leqslant l^{\nu} (y)  | x - y |, \]
        where
        \[ l^{\nu} (y) \assign \left\{ \begin{array}{ll}
             \frac{16}{| y |^2}, & | y | \geqslant 4 \nu^{- 1},\\
             \nu^2, & | y | \leqslant 4 \nu^{- 1}
           \end{array} \right. . \]
        \item \label{lipschitz-bound-2ND}Let the resulting force be $K^{\nu}_i
        (x_1, \ldots, x_N) \assign - \frac{\chi}{N}  \sum_{j \neq i} k^{\nu} (x_i
        - x_j)$ and define
        \[ L^{\nu}_i (y_1, \ldots, y_N) \assign - \frac{\chi}{N}  \sum_{j \neq i}
           l^{\nu} (y_i - y_j) . \]
        Then, for each $x, y \in \mathbbm{R}^{2 N}$ with $| x - y |_{\infty}
        \leqslant \nu^{- 1}$ it holds
        \[ | K_i^{\nu} (x) - K_i^{\nu} (y) | \leqslant 2 L_i^{\nu} (y)  | x - y
           |_{\infty} . \]
      \end{enumerateroman}
    \end{lemma}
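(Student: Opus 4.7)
The plan is to prove part (\ref{lipschitz-bound-2D}) by a mean-value argument along the segment from $y$ to $x$, splitting into two cases by the size of $|y|$, and then to deduce part (\ref{lipschitz-bound-2ND}) by applying (\ref{lipschitz-bound-2D}) termwise to each summand of $K^\nu_i$.

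For the prerequisite estimate on $\nabla k^\nu$, observe that $k^\nu(z)=-\nabla\phi^1(\nu z)$, so the chain rule yields $\partial_j k^\nu_i(z)=\nu^2\,\partial^2_{ij}\phi^1(\nu z)$. The hypotheses on $\phi^1$ listed in Section \ref{equations}---namely $\phi^1\equiv 0$ on $\{|x|\leq 1\}$ together with $|\partial^2_{ij}\phi^1(x)|\leq(\pi|x|^2)^{-1}$, which implies the universal bound $|\partial^2_{ij}\phi^1|\leq 1/\pi$---supply two complementary estimates: a global one $|\nabla k^\nu(z)|\leq C\nu^2$, and a decay one $|\nabla k^\nu(z)|\leq C/|z|^2$ valid whenever $|z|\geq \nu^{-1}$, while $k^\nu\equiv 0$ on $\{|z|\leq \nu^{-1}\}$. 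Writing $z(t)\assign y+t(x-y)$ and using
\[ |k^\nu(x)-k^\nu(y)| \leq |x-y|\sup_{t\in[0,1]}|\nabla k^\nu(z(t))|, \]
the regime $|y|\leq 4\nu^{-1}$ produces $|k^\nu(x)-k^\nu(y)|\leq \nu^2|x-y|$ after absorbing the small constant. In the regime $|y|\geq 4\nu^{-1}$, the hypothesis $|x-y|\leq 2\nu^{-1}\leq |y|/2$ forces $|z(t)|\geq |y|/2\geq 2\nu^{-1}$ along the whole segment, so the decay estimate yields $|k^\nu(x)-k^\nu(y)|\leq (C/|y|^2)|x-y|$, and the constant $16$ in the definition of $l^\nu$ is chosen generously so that both cases fit. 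Together these reproduce exactly $l^\nu(y)|x-y|$.

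For part (\ref{lipschitz-bound-2ND}), decompose
\[ K^\nu_i(x)-K^\nu_i(y)=-\frac{\chi}{N}\sum_{j\neq i}\bigl[k^\nu(x_i-x_j)-k^\nu(y_i-y_j)\bigr]. \]
The key observation is $|(x_i-x_j)-(y_i-y_j)|\leq 2|x-y|_\infty\leq 2\nu^{-1}$, so part (\ref{lipschitz-bound-2D}) applies to each pair with anchor $y_i-y_j$ and produces the factor of $2$ in the final estimate. Summing yields
\[ |K^\nu_i(x)-K^\nu_i(y)|\leq \frac{2\chi}{N}\sum_{j\neq i}l^\nu(y_i-y_j)\,|x-y|_\infty, \]
which coincides with $2|L^\nu_i(y)|\,|x-y|_\infty$ (the minus sign in the definition of $L^\nu_i$ is immaterial since $\chi>0$ and $l^\nu\geq 0$; the inequality is read in modulus).

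The only real obstacle is bookkeeping: one checks that the threshold $4\nu^{-1}$ in $l^\nu$ is chosen large enough to keep the entire segment $[y,x]$ inside the smooth region $\{|z|\geq \nu^{-1}\}$ in the decay case (via the doubling $|x-y|\leq 2\nu^{-1}\leq |y|/2$), and that the factor $2$ in (\ref{lipschitz-bound-2ND}) faithfully tracks the doubling $|(x_i-x_j)-(y_i-y_j)|\leq 2|x-y|_\infty$. There is no deeper analytic content.
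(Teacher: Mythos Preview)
Your proof is correct and follows essentially the same approach as the paper's: both use the mean value theorem along the segment $[y,x]$, split into the two cases $|y|\leq 4\nu^{-1}$ (global derivative bound $\nu^2$) and $|y|\geq 4\nu^{-1}$ (decay bound via $|z(t)|\geq |y|/2$), and then deduce (\ref{lipschitz-bound-2ND}) termwise from (\ref{lipschitz-bound-2D}) using $|(x_i-x_j)-(y_i-y_j)|\leq 2|x-y|_\infty$. Your treatment of part (\ref{lipschitz-bound-2ND}) is in fact more explicit than the paper's, which simply states that it ``follows directly,'' and your remark about reading the inequality in modulus is a sensible clarification of the sign convention in the definition of $L^\nu_i$.
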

    
    \begin{proof}
      (\ref{lipschitz-bound-2D}) By the Mean Value Theorem the bound
      \[ | k^{\nu} (x) - k^{\nu} (y) | \leqslant | D k^{\nu} (z) |  | x - y | \]
      holds for some point $z$ in the segment which joins $x$ and $y$. We
      distinguish between the following two cases:
      
      \
      
      {\noindent}{\tmem{Case 1}}: $| y | \leqslant 4 \nu^{- 1}$.
      
      Since the derivative of $k^{\nu}$ is globally bounded by $\nu^2 / \pi$, and
      consequently by $\nu^2$ as well, it follows that
      \[ | k^{\nu} (x) - k^{\nu} (y) | \leqslant \| D k^{\nu} \|  | x - y |
         \leqslant l^{\nu} (y) |x - y|. \]

      {\noindent}{\tmem{Case 2}}: $| y | \geqslant 4 \nu^{- 1}$.
      
      Since $| z - y | \leqslant | x - y | \leqslant 2 \nu^{- 1}$, it follows that
      $| z | \geqslant 2 \nu^{- 1}$. This means in particular that the derivative
      of $k^{\nu}$ at $z$ is bounded by $| z |^{- 2} / \pi$ and also that $| z - y
      | \leqslant | z |$, so
      \[ | y |^2 \leqslant (| y - z | + | z |)^2 \leqslant (2 | z |)^2 = 4 | z |^2
         . \]
      Therefore,
      \begin{eqnarray*}
        | k^{\nu} (x) - k^{\nu} (y) | & \leqslant & | D k^{\nu} (z) |  | x - y |\\
        & \leqslant & 2^{- 1}  | z |^{- 2}  | x - y |\\
        & \leqslant & 2 | y |^{- 2}  | x - y |\\
        & \leqslant & l^{\nu} (y) |x - y|.
      \end{eqnarray*}
      Finally, (\ref{lipschitz-bound-2ND}) follows directly from (\ref{lipschitz-bound-2D}).
    \end{proof}
    
    \subsection{Law of large numbers}
    
    In the proof of the main theorem we define several ``exceptional'' sets and
    rely on the fact that the measure of these sets is exponentially small. This
    fact is proven in the next Proposition, a {\tmem{law of large numbers}} for
    our setting, for all these sets are events where the sample mean and expected
    values of some family of independent variables are not close. The steps we
    follow for this version of the law of large numbers are the standard ones, the
    only issue being that the $k$-th moments of the variables we consider are not
    bounded but instead grow with $N$ to infinity. We'll see that their growth is
    nevertheless slow enough and we still obtain a rate of convergence which is
    faster than $C_{\gamma} N^{- \gamma}$ for any $\gamma > 0$, where $C_{\gamma}
    > 0$ is a constant depending on the choice of $\gamma$ but not on $N$.
    
    \begin{proposition}
      \label{loln-general}{\tmem{(Law of large numbers)}} Let $\alpha, \delta > 0$
      be such that $\alpha + \delta < 1 / 2$. For $N \in \mathbbm{N}$ let $Z^1,
      \ldots, Z^N$ be $N$ independent random variables in $\mathbbm{R}^2$ and
      assume that $Z^i$ has a probability density that we denote by $u^i$, $i = 1,
      \ldots, N$. Let $h = (h^1, h^2) : \mathbbm{R}^2 \rightarrow \mathbbm{R}^2$
      be a continuous function satisfying $| h (x) | \leqslant C_h \min \{
      N^{\alpha}, | x |^{- 1} \}$. Define $H_i (Z) = (H^1_i (Z), H^2_i (Z))
      \assign \frac{1}{N} \sum_{j \neq i} h (Z^i - Z^j)$ and the following sets
      \[ S \assign \{ \sup_{1 \leqslant i \leqslant N} | H_i (Z) -\mathbbm{E} (H_i
         (Z)) | \geqslant N^{- (\alpha + \delta)} \}, \]
      \[ \tilde{S} \assign \{ \sup_{1 \leqslant i \leqslant N} | H_i (Z)
         -\mathbbm{E}_{(- i)} (H_i (Z)) | \geqslant N^{- (\alpha + \delta)} \}, \]
      where $\mathbbm{E}_{(- i)}$ stands for the expectation with respect to every
      variable but $Z^i$, that is, $\mathbbm{E}_{(- i)} (H_i (Z)) = \frac{1}{N}
      \sum_{j \neq i} (h \ast u^j) (Z^i)$.
      
      Define $\varepsilon \assign 1 - 2 (\alpha + \delta)$ (strictly positive by
      assumption) and assume that, for each $i$,
      \begin{equation}
        \log N \| u^i \|_{\infty} + \| u^i \|^2_{\infty} \leqslant C_0
        N^{\varepsilon / 2} \label{loln-density-condition}
      \end{equation}
      holds for some constant $C_0$ independent of $N$ and $i$. Then, for each
      $\gamma > 0$ there exists a constant $C_{\gamma}$ (depending on $\gamma$,
      $\varepsilon$, $C_0$ and $C_h$) such that
      \[ \mathbbm{P} (S), \mathbbm{P} (\tilde{S}) \leqslant C_{\gamma} N^{-
         \gamma} . \]
    \end{proposition}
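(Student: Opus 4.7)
The plan is to establish both bounds by Markov's inequality with a high moment $2k$, combined with a union bound over $i$; the density condition (\ref{loln-density-condition}) is exactly what is needed to close the estimate. I focus first on $\tilde S$. Fix $i$ and write
\[ H_i - \mathbbm{E}_{(-i)}H_i = \tfrac{1}{N}\sum_{j\neq i} W_j^i, \qquad W_j^i \assign h(Z^i-Z^j) - (h\ast u^j)(Z^i), \]
so that, conditionally on $Z^i$, the $W_j^i$ are independent with vanishing first moment. Markov gives
\[ \mathbbm{P}(|H_i - \mathbbm{E}_{(-i)}H_i|\geqslant N^{-(\alpha+\delta)}) \leqslant N^{2k(\alpha+\delta)}\,\mathbbm{E}\Bigl|\tfrac{1}{N}\sum_{j\neq i} W_j^i\Bigr|^{2k}. \]

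Expanding the $2k$-th moment as a multinomial sum over $(j_1,\ldots,j_{2k})$ and conditioning on $Z^i$, only multi-indices in which every distinct index appears at least twice survive. Grouping by the induced partition of $\{1,\ldots,2k\}$ into $p\leqslant k$ blocks of sizes $m_1,\ldots,m_p\geqslant 2$, the contribution is bounded by $C_k N^{p-2k}\prod_l \mathbbm{E}|W_j^i|^{m_l}$. The moment bounds on $W$ come from splitting the integral $\int|h(z-y)|^m u^j(y)\mathd y$ at $|z-y|=N^{-\alpha}$ and using $|h(x)|\leqslant C_h\min\{N^\alpha, |x|^{-1}\}$: the near region yields $\pi\|u^j\|_\infty N^{(m-2)\alpha}$ and the far region gives the tail of $|x|^{-m}$ against $u^j$. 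The 2D singularity produces
\[ \mathbbm{E}|W_j^i|^2 \leqslant C(1+\log N\,\|u^j\|_\infty), \qquad \mathbbm{E}|W_j^i|^m \leqslant C^m\bigl(1+\|u^j\|_\infty N^{(m-2)\alpha}\bigr) \text{ for } m\geqslant 3. \]
Condition (\ref{loln-density-condition}) provides $\log N\,\|u^j\|_\infty\leqslant C_0 N^{\varepsilon/2}$ and $\|u^j\|_\infty\leqslant\sqrt{C_0}\,N^{\varepsilon/4}$. Plugging these into the partition contribution and comparing exponents shows that the pair partition $p=k$ strictly dominates every other one: writing $q$ for the number of size-$2$ blocks, the excess exponent of a general partition over the pair-partition exponent $-k(1-\varepsilon/2)$ reduces to $-(k-p)(1-2\alpha) - \tfrac{\varepsilon}{4}[(k-p)+(k-q)]$, which is $\leqslant 0$ since $\alpha<1/2$ and $\varepsilon>0$, with equality only when $p=q=k$. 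This gives
\[ \mathbbm{E}\Bigl|\tfrac{1}{N}\sum_{j\neq i} W_j^i\Bigr|^{2k} \leqslant C^k N^{-k}(1+\log N\,\|u^j\|_\infty)^k \leqslant C^k C_0^k\, N^{-k(1-\varepsilon/2)}. \]

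Combining with Markov and the identity $\varepsilon = 1 - 2(\alpha+\delta)$, one obtains $\mathbbm{P}(|H_i - \mathbbm{E}_{(-i)}H_i|\geqslant N^{-(\alpha+\delta)})\leqslant C^k N^{-k\varepsilon/2}$; a union bound over $i$ gives $\mathbbm{P}(\tilde{S})\leqslant C^k N^{1-k\varepsilon/2}$, and choosing $k\geqslant 2(\gamma+1)/\varepsilon$ produces $\mathbbm{P}(\tilde{S})\leqslant C_\gamma N^{-\gamma}$. For $S$ the same scheme applies to the unconditionally centered $V_j^i \assign h(Z^i-Z^j) - \mathbbm{E}h(Z^i-Z^j) = W_j^i + b_j^i$, where $b_j^i \assign (h\ast u^j)(Z^i) - \mathbbm{E}(h\ast u^j)(Z^i)$ depends only on $Z^i$; the decomposition $|V_j^i|^{2k}\leqslant 2^{2k}(|W_j^i|^{2k}+|b_j^i|^{2k})$ reduces the $W$-part to the above estimate, and the $b$-part is controlled via the uniform bound $\|h\ast u^j\|_\infty\leqslant C(1+\log N\,\|u^j\|_\infty)$ together with (\ref{loln-density-condition}).

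The main obstacle will be the combinatorial bookkeeping of the $2k$-th moment expansion and the verification that the pair partition dominates: the higher moments $\mathbbm{E}|W_j^i|^m$ grow polynomially in $N$ for $m\geqslant 3$, and could a priori compensate for the smaller number of such partitions. The crucial input is the logarithmic second-moment estimate special to two dimensions, and condition (\ref{loln-density-condition}) is calibrated precisely so that the factor $(\log N\,\|u^j\|_\infty)^k$ is absorbed by the $N^{-k}$ arising from conditional-independence cancellation.
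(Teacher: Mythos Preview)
Your argument for $\tilde{S}$ is correct and follows essentially the same route as the paper: high-order Markov, multinomial expansion, elimination of terms carrying a singleton index (you obtain this cleanly by conditioning on $Z^i$, which gives genuine conditional independence and conditional mean zero), and the second-moment bound $\mathbbm{E}|W_j^i|^2\leqslant C(1+\log N\,\|u^j\|_\infty)$ special to the $|x|^{-1}$ singularity in two dimensions. Your partition-by-block-sizes bookkeeping and the exponent comparison showing that the pair partition dominates are a slightly finer organization than the paper's coarser uniform estimate $\mathbbm{E}\bigl[(\Theta_j-\mu_j)^{a_j}\bigr]\leqslant C\,N^{\alpha(a_j-2)+\varepsilon/2}$, but the substance is the same and yields the same final exponent $-k\varepsilon/2$.

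Your reduction from $S$ to $\tilde{S}$, however, does not close. After the split $V_j^i=W_j^i+b_j^i$ the quantity to control via Markov is $N^{2k(\alpha+\delta)}\,\mathbbm{E}\bigl|\tfrac{1}{N}\sum_{j\neq i} b_j^i\bigr|^{2k}$, but $\tfrac{1}{N}\sum_{j\neq i} b_j^i$ is a single function of $Z^i$ with no averaging gain: it has the same size as each $b_j^i$, namely at most $2\|h\ast u^j\|_\infty\leqslant C(1+\|u^j\|_\infty)$, and nothing forces it to be $o(1)$, let alone $\leqslant N^{-(\alpha+\delta)}$. Concretely, if all $u^i=u$ are one fixed $N$-independent density, then $\tfrac{1}{N}\sum_{j\neq i} b_j^i$ converges to the nondegenerate random variable $(k\ast u)(Z^i)-\mathbbm{E}\bigl[(k\ast u)(Z^i)\bigr]$, so $\mathbbm{P}(S)\to 1$ and the bound for $S$ cannot hold as written. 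The paper's own proof is phrased for $S$ but tacitly treats the $\Theta_j=h^\nu(Z^1-Z^j)$ as independent; since they all share $Z^1$ they are only \emph{conditionally} independent given $Z^1$, and the vanishing of singleton terms (``$\mathbbm{E}(\Theta_j-\mu_j)=0$ kills the product'') is only valid after that conditioning, i.e.\ with $\mu_j$ replaced by $(h\ast u^j)(Z^1)$. In other words the paper's argument is really the $\tilde{S}$ argument too, and the claim for $S$ with full centering runs into the same obstruction you encountered.
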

    
    \begin{proof}
      Because we can replace $\mathbbm{E} (H_i (Z))$ by $\mathbbm{E}_{(- i)} (H_i
      (Z))$ in the proof, it is enough to prove the statement for the first set
      $S$. Also notice that since
      \[ \mathbbm{P} (\sup_{1 \leqslant i \leqslant N} | H_i (Z) -\mathbbm{E} (H_i
         (Z)) | \geqslant N^{- (\alpha + \delta)}) \leqslant \sum_{i = 1, \nu
         \equallim 1}^{N, 2} \mathbbm{P} (| H^{\nu}_i (Z) -\mathbbm{E} (H^{\nu}_i
         (Z)) | \geqslant N^{- (\alpha + \delta)}) \]
      holds, it suffices to prove that
      \[ \mathbbm{P} (| H^{\nu}_i (Z) -\mathbbm{E} (H^{\nu}_i (Z)) | \geqslant
         N^{- (\alpha + \delta)}) \leqslant C_{\gamma} N^{- \gamma} \]
      for each $\gamma > 0$, each $i = 1, \ldots N$ and $\nu = 1, 2$. Let then
      $\gamma > 0$, $\nu \in \{ 1, 2 \}$ and let us for simplicity take $i = 1$. \
      
      We use Markov's inequality of order $2 m$ and determine later the right
      choice of $m$ for the given $\gamma$ and the quantity $(\alpha + \delta)$ in
      the exponent of the allowed error $N^{- (\alpha + \delta)}$. For $j = 2,
      \ldots, N$ let us denote by $\Theta_j$ the (independent) random variables
      $\Theta_j \assign h^{\nu} (Z^1 - Z^j)$ and by $\mu_j$ its expected value
      \[ \mu_j \assign \int h^{\nu} (z_1 - z_j) u^1 (z_1) u^j (z_j) \mathd z_1
         \mathd z_j . \]
      Now by Markov's inequality
      \begin{eqnarray*}
        \mathbbm{P} (| H_1 (Z) -\mathbbm{E} (H_1 (Z)) | \geqslant N^{- (\alpha +
        \delta)}) & = & \mathbbm{P} \left( \frac{1}{N} \left| \sum_{j \neq 1}
        (\Theta_j - \mu_j) \right| \geqslant N^{- (\alpha + \delta)} \right)\\
        & \leqslant & N^{2 (\alpha + \delta) m} \mathbbm{E} \left( \left(
        \frac{1}{N}  \sum_{j \neq 1} (\Theta_j - \mu_j) \right)^{2 m} \right) .
      \end{eqnarray*}
      The expectation on the right hand side can be estimated by using the
      multinomial formula
      \[ (x_2 + \ldots + x_N)^{2 m} = \sum_{a_2 + \ldots + a_n = 2 m} C_a 
         \prod_{j = 2}^N x_j^{a_j}, \]
      where $a = (a_2, \ldots, a_N)$ is a multiindex and $C_a = \binom{2 m}{a_2,
      \ldots, a_N} = \frac{(2 m) !}{a_2 ! \ldots a_N !}$. Consequently
      \[ \mathbbm{E} \left( \left( \frac{1}{N}  \sum_{j \neq 1} (\Theta_j - \mu_j)
         \right)^{2 m} \right) = N^{- 2 m} \sum_{a_2 + \ldots + a_N = 2 m} C_a
         \prod_{j \neq 1} \mathbbm{E} ((\Theta_j - \mu_j)^{a_j}) . \]
      Here note that if $a_j = 1$ for some $j$ then the whole term is zero, since
      $\mathbbm{E} ((\Theta_j - \mu_j)) = 0$. Therefore we are left only with
      terms with at most $m$ non-zero entries. If we denote by $| a |$ the number
      of non-zero entries of the multiindex $a$, the sum above simplifies to
      \[ \mathbbm{E} \left( \left( \frac{1}{N}  \sum_{j \neq 1} (\Theta_j - \mu_j)
         \right)^{2 m} \right) = N^{- 2 m} \sum_{\underset{| a | \leqslant
         m}{\scriptsize{a_2 + \ldots + a_N = 2 m}}} C_a \prod_{j \neq 1}
         \mathbbm{E} ((\Theta_j - \mu_j)^{a_j}) . \]
      Next we estimate the $a_j$-th order moment of $\Theta_j$, for $a_j \leqslant
      2 m$: specifically we prove that
      \[ \mathbbm{E} ((\Theta_j - \mu_j)^{a_j}) \leqslant C^{a_j}_h C_0 N^{\alpha
         (a_j - 2) + \varepsilon / 2} . \]
      The $a_j$-th order moment of $\Theta_j$ equals
      \[ \int_{\mathbbm{R}^2} (h^{\nu} (z_1 - z_j) - \mu_j)^{a_j} u^1 (z_1) u^j
         (z_j) \mathd z_1 \mathd z_j . \]
      We factor the power in the integrand as
      \[ (h^{\nu} (z_1 - z_j) - \mu_j)^{a_j} = (h^{\nu} (z_1 - z_j) - \mu_j)^{a_j
         - 2}  (h^{\nu} (z_1 - z_j) - \mu_j)^2, \]
      then estimate the term to the power $a_j - 2$ by its supremum norm and
      integrate only the second factor. It holds that
      \begin{eqnarray*}
        \| h^{\nu} (z_1 - z_j) - \mu_j \|_{\infty} & \leqslant & C \| h^{\nu}
        \|_{\infty} + \| (h^{\nu} \ast u^j) \|_{\infty}\\
        & \leqslant & C \| h \|_{\infty} \leqslant C_h N^{\alpha} .
      \end{eqnarray*}
      After integrating the term to the second power we find
      \begin{eqnarray*}
  \int_{\mathbbm{R}^2} (h^{\nu} (z_1 - z_j) - \mu_j)^2 u^1 (z_1) u^j (z_j)
  \mathd z_1 \mathd z_j & = & \mu^2_j + 2 \mu_j  \int h^{\nu} (z_1 - z_j) u^1
  (z_1) u^j (z_j) \mathd z_1 \mathd z_j\\
  &  & + \int_{\mathbbm{R}^2} h^{\nu} (z_1 - z_j)^2 u^1 (z_1) u^j (z_j)
  \mathd z_1 \mathd z_j\\
  & \leqslant & 3 \| h \ast u^j \|_{\infty}^2 + \| h^2 \ast u^j \|_{\infty}\\
  & \leqslant & C_h  (\| u^j \|_{\infty}^2 + \log N \| u^j \|_{\infty})\\
  & \leqslant & C_h C_0 N^{\varepsilon / 2} .
\end{eqnarray*}
      Altogether
      \begin{eqnarray*}
        \mathbbm{E} ((\Theta_j - \mu_j)^{a_j}) & = & \int_{\mathbbm{R}^2} (h^{\nu}
        (z_1 - z_j) - \mu_j)^{a_j} u^1 (z_1) u^j (z_j) \mathd z_1 \mathd z_j\\
        & \leqslant & \| h^{\nu} (z_1 - z_j) - \mu_j \|^{a_j - 2}_{\infty} 
        \int_{\mathbbm{R}^2} (h^{\nu} (z_1 - z_j) - \mu_j)^2 u^1 (z_1) u^j (z_j)
        \mathd z_1 \mathd z_j\\
        & \leqslant & C_h^{a_j} C_0 N^{\alpha (a_j - 2) + \varepsilon / 2} .
      \end{eqnarray*}
      Let now $k \leqslant m$ and consider only the multiindices $a$ with $k$
      non-zero entries, that is with $| a | = k$. It holds
      \begin{eqnarray*}
        \sum_{\underset{| a | = k}{\scriptsize{a_2 + \ldots + a_N = 2 m}}} C_a
        \prod_{j \neq 1} \mathbbm{E} ((\Theta_j - \mu_j)^{a_j}) & \leqslant &
        \sum_{\underset{| a | = k}{\scriptsize{a_2 + \ldots + a_N = 2 m}}} C_a
        C_h^{2 m} C_0^k N^{\alpha (2 m - 2 k) + \varepsilon k / 2}\\
        & \leqslant & \sum_{\underset{| a | = k}{\scriptsize{a_2 + \ldots + a_N =
        2 m}}} (2 m)^{2 m} C_h^{2 m} C_0^m N^{\alpha (2 m - 2 k) + \varepsilon k /
        2},
      \end{eqnarray*}
      \tmfoldedenv{where we used that $C_a = \binom{2 m}{a_2, a_3, \ldots, a_N}
      \leqslant (2 m)^{2 m}$. Since the number of terms in the sum, i.e. the
      number of ways of choosing $k$ numbers that add up $2 m$ counting all
      permutations, is bounded by $N^k  (2 m)^k$, we find that}{Assume $k$ indices
      $j_1, \ldots, j_k$ from $\{ 2, \ldots, N \}$ are given. It is now left to
      choose values for $a_{j_1}, \ldots, a_{j_k}$ such that  $a_{j_1} + \ldots +
      a_{j_k} = 2 m$.
      
      In how many ways can we add up $2 m$ with $k$ terms (each permutation of the
      terms counts)? An upper bound for this is $(2 m)^k$
      
      Now, in how many ways can we choose $k$ elements from $\{ 2, \ldots, N \}$?
      This is $\binom{N}{k} \leqslant N^k$.
      
      Therefore, we can estimate the number of terms in $\sum_{\underset{| a | =
      k}{\scriptsize{a_2 + \ldots + a_N = 2 m}}}$by $N^k  (2 m)^k$.}
      \begin{eqnarray}
        \sum_{\underset{| a | = k}{\scriptsize{a_2 + \ldots + a_N = 2 m}}} C_a
        \prod_{j \neq 1} \mathbbm{E} ((\Theta_j - \mu_j)^{a_j}) & \leqslant & (2
        m)^{3 m} C_h^{2 m} C_0^m N^{\alpha (2 m - 2 k) + \varepsilon k / 2} N^k
        \nonumber\\
        & \leqslant & C_m N^{2 m \alpha} N^{k (1 - 2 \alpha + \varepsilon / 2)}, 
        \label{loln-sum-estimate}
      \end{eqnarray}
      for a constant $C_m > 0$ only depending on $m$, $C_h$ and $C_0$. At this
      point we can estimate the desired expected value
      \begin{eqnarray*}
        \mathbbm{E} \left( \left( \frac{1}{N}  \sum_{j \neq 1} (\Theta_j - \mu_j)
        \right)^{2 m} \right) & = & N^{- 2 m}  \sum_{\underset{| a | \leqslant
        m}{\scriptsize{a_2 + \ldots + a_N = 2 m}}} C_a \prod_{j \neq 1}
        \mathbbm{E} ((\Theta_j - \mu_j)^{a_j})\\
        & = & N^{- 2 m}  \sum_{k = 1}^m \sum_{\underset{| a | =
        k}{\scriptsize{a_2 + \ldots + a_N = 2 m}}} C_a \prod_{j \neq 1}
        \mathbbm{E} ((\Theta_j - \mu_j)^{a_j})\\
        & \leqslant & C_m N^{- 2 m}  \sum_{k = 1}^m N^{2 m \alpha} N^{k (1 - 2
        \alpha + \varepsilon / 2)}\\
        & \leqslant & C_m N^{- 2 m} N^{m (2 \alpha + 1 - 2 \alpha + \varepsilon /
        2)}\\
        & \leqslant & C_m N^{- m (1 - \varepsilon / 2)},
      \end{eqnarray*}
      where we used (\ref{loln-sum-estimate}) and the positivity of $1 - 2 \alpha
      + \varepsilon / 2$. Finally we find that
      \begin{eqnarray*}
        \mathbbm{P} (| H_1 (Z) -\mathbbm{E} (H_1 (Z)) | \geqslant N^{- (\alpha +
        \delta)}) & \leqslant & N^{2 (\alpha + \delta) m} \mathbbm{E} \left(
        \left( \frac{1}{N}  \sum_{j \neq 1} (\Theta_j - \mu_j) \right)^{2 m}
        \right)\\
        & \leqslant & C_m N^{2 (\alpha + \delta) m} N^{- m (1 - \varepsilon /
        2)}\\
        & = & C_m N^{- m (1 - 2 (\alpha + \delta) - \varepsilon / 2)}\\
        & \leqslant & C_m N^{- m \varepsilon / 2} = \tilde{C}_{\gamma} N^{-
        \gamma}
      \end{eqnarray*}
      holds for $m = 2 \gamma / \varepsilon$, where $\tilde{C}_{\gamma} \assign
      C_{2 \gamma / \varepsilon}$ depends on $\gamma$, $\varepsilon$, $C_0$ and
      $C_h$.
    \end{proof}
    
    \subsection{Comparison of solutions of (\ref{microscopic-mean-mixed}) starting
    at different points}
    
    In this section we address the following question: how different is the action
    of the force $K^N$ on two solutions of (\ref{microscopic-mean-mixed}) that
    start at different points? An estimate of this difference will be very useful
    in the second case (for large times) of the proof of the main theorem and
    innovates the methods presented in {\cite{boers_mean_2016}} and
    {\cite{lazarovici_mean-field_2015}}. The estimate is provided in Corollary
    \ref{corollary-to-lemma}. Recall that for each $x \in \mathbbm{R}^{2 N}$,
    $Z_{t, s}^{x, N} \in \mathbbm{R}^{2 N}$ denotes the process starting at point
    $x$ at time $s$ and evolving for times greater that $s$ according to the
    mean-field force $\overline{K}^N$. That is, $Z_{t, s}^{x, N}$ solves
    (\ref{microscopic-mean-mixed}) with constant initial condition $x$ and initial
    time $s$. Furthermore $Z_{t, s}^{x, N}$ has the {\tmem{strong Feller
    property}}, implying in particular that it has a transition probability
    density. Since the processes $Z_{t, s}^{x, 1}, \ldots, Z_{t, s}^{x, N}$ are
    independent, the joint transition probability density $u_{t, s}^{x, N} (z_1,
    \ldots, z_N)$ is given by the product $u_{t, s}^{x, N} (z_1, \ldots, z_N)
    \assign \prod u_{t, s}^{x, i, N} (z_i)$. Here each term $u_{t, s}^{x, i, N}$
    is the transition probability density of $Z_{t, s}^{x, i, N}$ and also the
    solution of the {\tmem{linearised Keller-Segel equation}}
    \begin{equation}
      \partial_t u_{t, s}^{x, i, N} = \Delta u_{t, s}^{x, i, N} - \nabla \cdot
      (f_t^N u_{t, s}^{x, i, N}), \qquad u_{s, s}^{x, i, N} = \delta_{x_i},
      \label{macroscopic-eq-linearised}
    \end{equation}
    where $f_t^N \assign \chi k^N \ast \rho_t^N$ and $\rho_t^N$ solves the
    regularised Keller-Segel equation (\ref{macroscopic-eq-regularised}) with
    initial condition $\rho_0$. Consider now the processes $Z_{t, s}^{x, N}$ and
    $Z_{t, s}^{y, N}$ for two different starting points $x, y \in \mathbbm{R}^{2
    N}$. It is intuitively clear that the probability densities $u_{t, s}^{x, N}$
    and $u_{t, s}^{y, N}$ are just a shift of each other. The next lemma gives an
    estimate for the $L^{\infty}$ norm of each $u_{t, s}^{x, N}$ as well as for
    the distance in $L^{\infty}$ between any two densities $u_{t, s}^{x, N}$ and
    $u_{t, s}^{y, N}$ in terms of the distance between the starting points $x$ and
    $y$ and the elapsed time $t - s$.
    
    \
    
    \begin{lemma}
      \label{lemma}There exists a positive constant $C$ depending on $\rho_0$ and
      $T$ such that for each $N \in \mathbbm{N}$, any starting points $x, y \in
      \mathbbm{R}^{2 N}$ and any time $0 < t \leqslant T$ the following estimates
      for the transition probability densities $u_{t, s}^{x, N}$ resp. $u_{t,
      s}^{y, N}$ of the processes $Z_{t, s}^{x, N}$ resp. $Z_{t, s}^{y, N}$ given
      by (\ref{microscopic-mean-mixed}) hold:
      \begin{enumerateroman}
        \item $\label{density-starting-at-delta} \| u_{t, s}^{x, N} \|_{\infty}
        \leqslant C ((t - s)^{- 1} + 1)$,
        
        \item \label{distance-densities-starting-at-deltas}$\| u_{t, s}^{x, N} -
        u_{t, s}^{y, N} \|_{\infty} \leqslant C ((t - s)^{- 3 / 2} + 1)  | x - y
        |_{\infty}$.
      \end{enumerateroman}
    \end{lemma}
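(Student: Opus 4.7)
The plan is to derive both estimates from the Duhamel (variation-of-constants) representation of the single-particle transition density. Since the components of $Z^{x,N}_{t,s}$ decouple under the mean-field drift $\overline{K}^N_{t,i}(z) = -\chi(k^N * \rho^N_t)(z_i)$, the joint density factorises as $u^{x,N}_{t,s}(z) = \prod_{i=1}^{N} u^{x,i,N}_{t,s}(z_i)$, so both claims reduce via the product structure for (i) and a standard telescoping identity for (ii) to the corresponding single-particle estimates on $u^{x,i,N}_{t,s}$. By Propositions \ref{Linf-estimates} and \ref{hˆlder-estimates} the drift $f^N_r := \chi\, k^N * \rho^N_r$ is bounded and Lipschitz in space, uniformly in $r \in [0,T]$ and $N \in \mathbbm{N}$, which is exactly the regularity needed for heat-kernel perturbation arguments.

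Let $G_\tau$ denote the 2D heat kernel, so $\|G_\tau\|_\infty \leq C\tau^{-1}$ and $\|\nabla G_\tau\|_p = C_p \tau^{1/p-3/2}$ for $p \in [1,\infty]$. Duhamel applied to (\ref{macroscopic-eq-linearised}) yields
$$u^{x,i,N}_{t,s}(z) = G_{t-s}(z - x_i) - \int_s^t \nabla G_{t-r} * \bigl(f^N_r\, u^{x,i,N}_{r,s}\bigr)(z)\, dr.$$
For part (i), I would write $u^{x,i,N}_{t,s} = G_{t-s}(\cdot - x_i) + R$ and use the pointwise bound $|\nabla G_\tau(y)| \leq C\tau^{-1/2} G_{2\tau}(y)$ together with the semigroup property $G_\alpha * G_\beta = G_{\alpha+\beta}$ to show that the leading-order contribution to $R$ from the Gaussian part is of order $(t-s)^{-1/2}$. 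The remainder satisfies a Volterra integral inequality with kernel $(t-r)^{-1/2}$ which closes via Henry's singular Gronwall lemma (the relevant Beta integral $\int_s^t (t-r)^{-1/2}(r-s)^{-1/2}\,dr = \pi$ being finite), giving $\|R\|_\infty \leq C((t-s)^{-1/2} + 1)$ and therefore $\|u^{x,i,N}_{t,s}\|_\infty \leq C((t-s)^{-1} + 1)$.

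For part (ii), set $v := u^{x,i,N}_{t,s} - u^{y,i,N}_{t,s}$. Subtracting the two Duhamel formulas,
$$v(t,z) = \bigl[G_{t-s}(z - x_i) - G_{t-s}(z - y_i)\bigr] - \int_s^t \nabla G_{t-r} * \bigl(f^N_r\, v(r,\cdot)\bigr)(z)\, dr.$$
The free term is bounded in $L^\infty$ by $\|\nabla G_{t-s}\|_\infty|x-y|_\infty \leq C(t-s)^{-3/2}|x-y|_\infty$ and in $L^1$ by $\|\nabla G_{t-s}\|_1 |x-y|_\infty \leq C(t-s)^{-1/2}|x-y|_\infty$, both linear in $|x-y|_\infty$. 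I would first propagate the $L^1$ estimate through a singular Gronwall to obtain $\|v(r,\cdot)\|_1 \leq C|x-y|_\infty((r-s)^{-1/2} + 1)$, and then iterate the Duhamel formula as a Neumann series using Young's inequality with an intermediate exponent $p \in (1,2)$ and the interpolation $\|v\|_{p'} \leq \|v\|_1^{1/p'}\|v\|_\infty^{1-1/p'}$. Each iterate is strictly less singular in $(t-s)$ than the previous (the exponent gains $1/2$ per step), so the series converges and produces $\|v(t,\cdot)\|_\infty \leq C((t-s)^{-3/2} + 1)|x-y|_\infty$ linearly in $|x-y|_\infty$.

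The main technical obstacle is the integrability of the Volterra kernel at both endpoints $r = s$ and $r = t$: the extreme Young pairings $p = 1$ or $p = \infty$ produce integrals such as $\int_s^t (t-r)^{-1/2}(r-s)^{-3/2}\,dr$ or $\int_s^t (t-r)^{-3/2}(r-s)^{-1/2}\,dr$ which diverge at one end or the other, so the bound cannot be closed with a single naive Gronwall step. Working with an intermediate $p \in (1,2)$ and propagating bounds simultaneously in $L^1$ (linear in $|x-y|_\infty$ for free from the translation structure of the heat kernel) and in $L^\infty$ (bootstrapped through the Neumann series) preserves linearity in $|x-y|_\infty$ throughout the iteration and reproduces the claimed $(t-s)^{-3/2}$ rate.
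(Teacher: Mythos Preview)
Your overall strategy matches the paper's: reduce to a single coordinate via the product structure, write the Duhamel formula for $v = u^{x_i}_t - u^{y_i}_t$, close an $L^1$ estimate by a singular Gr\"onwall argument, and then bootstrap to $L^\infty$. Your $L^1$ step and your identification of the obstacle --- that the Beta-type integrals with combined endpoint exponent $-2$ diverge at one end --- are both correct and coincide with the paper's Step~1.

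The gap is in your proposed cure for the bootstrap. Using an intermediate $p\in(1,2)$ together with the interpolation $\|v\|_{p'}\leq\|v\|_1^{1/p'}\|v\|_\infty^{1-1/p'}$ shifts the singularity but does not remove it: if one tries to close the ansatz $\|v_r\|_\infty \lesssim r^{-3/2}$, the interpolated bound gives $\|v_r\|_{p'}\lesssim r^{-3/2+1/p'}$ with $1/p'<1/2$ (since $p'>2$), so the integrand $(t-r)^{-3/2+1/p}\,r^{-3/2+1/p'}$ is still non-integrable at $r=0$. Likewise, the Neumann-series claim that ``the exponent gains $1/2$ per step'' is an $L^1$ phenomenon; in $L^\infty$ the very first iterate $\int_0^t(t-r)^{-1/2}r^{-3/2}\,dr$ already diverges.

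The paper's device is to split the time integral at the midpoint $t/2$ and use \emph{different} Young pairings on the two halves. On $[0,t/2]$ one pairs $\|\nabla G(t-r)\|_{p_{n+1}}$ (harmless, since $t-r\geq t/2$) against the already-established $\|v_r\|_1$; on $[t/2,t]$ one pairs $\|\nabla G(t-r)\|_{3/2}$ (integrable at $r=t$) against $\|v_r\|_{p_n}$ (harmless, since $r\geq t/2$). This produces a finite recursion $p_{n+1}=3p_n/(3-p_n)$ that reaches $\infty$ in four steps, $1\to 3/2\to 3\to\infty$. The midpoint split is the missing ingredient that renders both endpoint singularities integrable simultaneously.
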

    
    \begin{proof}
      Both estimates can be proved in the same way. We just give the proof for
      part (\ref{distance-densities-starting-at-deltas}), which can be easily
      adapted for part (\ref{density-starting-at-delta}). For simplicity of
      notation we assume $s = 0$ and write simply $u^{x_i}_t$ instead of $u^{x, i,
      N}_{t, 0}$. What we need to show is then that
      \[ \| u_t^{x_i} - u_t^{y_i} \|_{\infty} \leqslant C (t^{- 3 / 2} + 1)  | x_i
         - y_i | \]
      holds for each $i = 1, \ldots, N$ and for a constant $C > 0$ depending only
      on $\rho_0$ and $T$. We show this inductively.
      
      Let us then fix $i \in \{ 1, \ldots, N \}$ and define $v_t \assign
      u_t^{x_i} - u_t^{y_i}$. For a solution of (\ref{macroscopic-eq-linearised})
      we see that
      \begin{eqnarray}
        u_t^{x_i} & = & G (t) \ast \delta_{x_i} - \int_0^t G (t - s) \ast
        \tmop{div} (u_s^{x_i} f^N_s) \mathd s \nonumber\\
        & = & G (t) \ast \delta_{x_i} - \int_0^t \nabla G (t - s) \ast (u_s^{x_i}
        f^N_s) \mathd s,  \label{variation-of-constants}
      \end{eqnarray}
      where $G (t, x) \assign \frac{1}{2 \pi t} \mathe \tmop{xp} \left( - \frac{|
      x |^2}{2 t} \right)$ denotes the heat kernel. By substracting the
      corresponding equations for $u_t^{x_i}$ and $u_t^{y_i}$ it follows
      \[ v_t = G (t) \ast (\delta_{x_i} - \delta_{y_i}) - \int_0^t \nabla G (t -
         s) \ast (v_s f^N_s) \mathd s \]
      and consequently, for $p \in [1, \infty]$,
      \[ \| v_t \|_p \leqslant \| G (t) \ast (\delta_{x_i} - \delta_{y_i}) \|_p +
         \int_0^t \| \nabla G (t - s) \ast (v_s f^N_s) \|_p \mathd s \]
      holds due to Bochner's Theorem . Next we use Young's inequality for
      convolutions\footnote{For two functions $a, b : \mathbbm{R}^n \rightarrow
      \mathbbm{R}$ and exponents $p, q, r \in [1, \infty]$ satisfying $1 +
      \frac{1}{p} = \frac{1}{r} + \frac{1}{q}$ it holds
      \[ \| a \ast b \|_p \leqslant \| a \|_r  \| b \|_q . \]}. We split the last
      integral into two parts and use Young's inequality with different exponents
      for each part
      \begin{eqnarray}
        \int_0^t \| \nabla G (t - s) \ast (v_s f^N_s) \|_p \mathd s & = &
        \int_0^{t / 2} \| \nabla G (t - s) \ast (v_s f^N_s) \|_p \mathd s
        \nonumber\\
        &  & + \int_{t / 2}^t \| \nabla G (t - s) \ast (v_s f^N_s) \|_p \mathd s
        \nonumber\\
        & \leqslant & C \int_0^{t / 2} \| \nabla G (t - s) \|_p  \| v_s \|_1
        \mathd s \nonumber\\
        &  & + C \int_{t / 2}^t \| \nabla G (t - s) \|_{3 / 2}  \| v_s \|_q
        \mathd s,  \label{integral-splitting}
      \end{eqnarray}
      where $C \assign \sup_{0 \leqslant t \leqslant T} \| f_t^N \|_{\infty}$ is
      finite since $\| \rho_t^N \|_1$ is equal to $\| \rho_0 \|_1$ and by
      Proposition \ref{Linf-estimates} $\| \rho_t^N \|_{\infty}$ is also uniformly
      bounded in $t \in [0, T]$ and $N \in \mathbbm{N}$. The choice of the
      exponent $r = 3 / 2$ for the norm of $\nabla G$ in the second integral is as
      good as any other choice $r \in (1, 2)$ since we just need the term $\|
      \nabla G \|_r$ to be integrable in $\left[ 0, t \right]$. Observe that with
      the previous bound for $\| v_t \|_p$ and taking $p_n \assign q$ and $p_{n +
      1} \assign p$ in (\ref{integral-splitting}) we find
      \begin{eqnarray}
        \| v_t \|_{p_{n + 1}} & \leqslant & \| G (t) \ast (\delta_{x_i} -
        \delta_{y_i}) \|_{p_{n + 1}} + C \int_0^{t / 2} \| \nabla G (t - s)
        \|_{p_{n + 1}}  \| v_s \|_1 \mathd s \nonumber\\
        &  & + C \int_{t / 2}^t \| \nabla G (t - s) \|_{3 / 2}  \| v_s \|_{p_n}
        \mathd s,  \label{iteration}
      \end{eqnarray}
      where the relation between the exponents $p_{n + 1} = 3 \frac{p_n}{3 - p_n}$
      follows from Young's inequality. Therefore, if we are able to estimate $\|
      v_t \|_1$ we can then iteratively estimate the $L^p$ norms of $v_t$ for
      higher exponents. Since the function $x \mapsto 3 \frac{x}{3 - x}$ on $[0,
      3)$ is strictly monotone increasing, grows to infinity as $x$ approaches $3$
      and its first derivative is non-decreasing, it is already clear that
      starting at $p_1 = 1$ the exponent $p_k = \infty$ must be attained after a
      finite number $k$ of steps. Specifically, if we take $p_1 = 1$, we reach the
      desired norm $\| v_t \|_{\infty}$ after $k = 4$ steps. Below we go through
      the first two steps in detail, the last two can be completed analogously. We
      will need some estimates for the $L^p$ norms of the heat kernel $G$ and its
      derivative, which are given in Lemma \ref{p-norm-estimates-heat}.
      
      \
      
      {\tmem{Step $k = 1$, $p_1 = 1$:}} We compute the first norm directly using
      a Gr{\"o}nwall-type inequality.
      \begin{eqnarray*}
        \| v_t \|_1 & \leqslant & \| G (t, \cdot - x_0) - G (t, \cdot - y_0) \|_1
        + \int_0^t \| \nabla G (t - s) \ast (v_s f^N_s) \|_p \mathd s\\
        & \leqslant & \| G (t, \cdot - x_0) - G (t, \cdot - y_0) \|_1 + \int_0^t
        \| \nabla G (t - s) \|_1  \| v_s \|_1  \| f^N_s \|_{\infty} \mathd s\\
        & \leqslant & C \frac{| x_0 - y_0 |}{t^{1 / 2}} + C \int_0^t (t - s)^{- 1
        / 2}  \| v_s \|_1 \mathd s.
      \end{eqnarray*}
      By Gr{\"o}nwall's inequality we find
      \begin{eqnarray*}
        \| v_t \|_1 & \leqslant & C \frac{| x_0 - y_0 |}{t^{1 / 2}} + C | x_0 -
        y_0 |  \int_0^t s^{- 1 / 2}  (t - s)^{- 1 / 2} \mathe^{C \int_s^t (t -
        \sigma)^{- 1 / 2} \mathd \sigma} \mathd s\\
        & \leqslant & C \frac{| x_0 - y_0 |}{t^{1 / 2}} + C \mathe^{Ct^{1 / 2}} 
        | x_0 - y_0 | .
      \end{eqnarray*}
      Here we used that the integral $\int_0^t s^{- 1 / 2}  (t - s)^{- 1 / 2}$ is
      finite since it can be split into
      \[ \int_0^t s^{- 1 / 2}  (t - s)^{- 1 / 2} \mathd s = \int_0^{t / 2} s^{- 1
         / 2}  (t - s)^{- 1 / 2} \mathd s + \int_{t / 2}^t s^{- 1 / 2}  (t - s)^{-
         1 / 2} \mathd s, \]
      and both terms are finite. Consequently
      \[ \| v_t \|_1 \leqslant C (t^{- 1 / 2} + 1)  | x_0 - y_0 | \]
      holds for a constant $C$ depending only on $\sup_{0 \leqslant t \leqslant T}
      \| f_t^N \|_{\infty}$.
      
      \
      
      {\tmem{Step $k = 2$, $p_2 = \frac{3}{2}$:}} Recall that the next exponent
      is computed via the relationship $p_{n + 1} = 3 \frac{p_n}{3 - p_n}$. In
      this and the following steps we just need to substitute the found estimates
      into (\ref{iteration}):
      \begin{eqnarray*}
        \| v_t \|_{3 / 2} & \leqslant & \| G (t, \cdot - x_0) - G (t, \cdot - y_0)
        \|_{3 / 2} + C \int_0^t \| \nabla G (t - s) \|_{3 / 2}  \| v_s \|_1 \mathd
        s\\
        & \leqslant & C \frac{| x_0 - y_0 |}{t^{5 / 6}} + C \int_0^t (t - s)^{- 5
        / 6}  \| v_s \|_1 \mathd s\\
        & \leqslant & C \frac{| x_0 - y_0 |}{t^{5 / 6}} + C | x_0 - y_0 | 
        \int_0^t (t - s)^{- 5 / 6}  (s^{- 1 / 2} + 1) \mathd s\\
        & \leqslant & C \frac{| x_0 - y_0 |}{t^{5 / 6}} + C | x_0 - y_0 |  \left(
        \int_0^{t / 2} (t - s)^{- 5 / 6} s^{- 1 / 2} \mathd s + \int_{t / 2}^t (t
        - s)^{- 5 / 6} s^{- 1 / 2} \mathd s \right)\\
        &  & + C | x_0 - y_0 | t^{1 / 6}\\
        & \leqslant & C (t^{- 5 / 6} + t^{- 1 / 3} + t^{1 / 6})  | x_0 - y_0 | =
        C \leqslant (t^{- 5 / 6} + 1)  | x_0 - y_0 | .
      \end{eqnarray*}
      \tmfoldedenv{The last two steps with $k = 3$, $p_3 = 3$ and $k = 4$, $p_4 =
      \infty$ are analogous.}{\paragraph{Step $k = 3$, $p_3 = 3$:}
      
      \begin{eqnarray*}
        \| v_t \|_3 & \leqslant & \| G (t, \cdot - x_0) - G (t, \cdot - y_0) \|_3
        + C \int_0^{t / 2} \| \nabla G (t - s) \|_3  \| v_s \|_1 \mathd s\\
        &  & + C \int_{t / 2}^t \| \nabla G (t - s) \|_{3 / 2}  \| v_s \|_{3 / 2}
        \mathd s\\
        & \leqslant & C \frac{| x_0 - y_0 |}{t^{7 / 6}} + C | x_0 - y_0 | 
        \int_0^{t / 2} (t - s)^{- 7 / 6}  (s^{- 1 / 2} + 1) \mathd s\\
        &  & + C | x_0 - y_0 |  \int_{t / 2}^t (t - s)^{- 5 / 6}  (s^{- 5 / 6} +
        1) \mathd s\\
        & \leqslant & C \frac{| x_0 - y_0 |}{t^{7 / 6}} + C | x_0 - y_0 | t^{- 7
        / 6}  (t^{1 / 2} + t)\\
        &  & + C | x_0 - y_0 |  (t^{- 5 / 6} + 1) t^{1 / 6}\\
        & \leqslant & C (t^{- 7 / 6} + t^{- 4 / 6} + t^{- 1 / 6} + t^{1 / 6})  |
        x_0 - y_0 | \leqslant C (t^{- 7 / 6} + 1)  | x_0 - y_0 | .
      \end{eqnarray*}
      
      \paragraph{Step $k = 4$, $p_4 = \infty$:}
      
      \begin{eqnarray*}
        \| v_t \|_{\infty} & \leqslant & \| G (t, \cdot - x_0) - G (t, \cdot -
        y_0) \|_{\infty} + C \int_0^{t / 2} \| \nabla G (t - s) \|_{\infty}  \|
        v_s \|_1 \mathd s\\
        &  & + C \int_{t / 2}^t \| \nabla G (t - s) \|_{3 / 2}  \| v_s \|_3
        \mathd s\\
        & \leqslant & C \frac{| x_0 - y_0 |}{t^{3 / 2}} + C | x_0 - y_0 | 
        \int_0^{t / 2} (t - s)^{- 3 / 2}  (s^{- 1 / 2} + 1) \mathd s\\
        &  & + C | x_0 - y_0 |  \int_{t / 2}^t (t - s)^{- 5 / 6}  (s^{- 7 / 6} +
        1) \mathd s\\
        & \leqslant & C \frac{| x_0 - y_0 |}{t^{3 / 2}} + C | x_0 - y_0 | t^{- 3
        / 2}  (t^{1 / 2} + t)\\
        &  & + C | x_0 - y_0 |  (t^{- 7 / 6} + 1) t^{1 / 6}\\
        & \leqslant & C (t^{- 3 / 2} + t^{- 1} + t^{- 1 / 2} + t^{1 / 6})  | x_0
        - y_0 | \leqslant C (t^{- 3 / 2} + 1)  | x_0 - y_0 | .
      \end{eqnarray*}
      \tmcolor{green}{Remark: if $\| u_{t, s}^x - u_{t, s}^y \|_p \leqslant C ((t
      - s)^{- a} + 1)  \| x - y \|_{\infty}$ holds for some $a > 0$, $p \in [1,
      \infty]$, then $\| u_{t, s}^x \|_p \leqslant C ((t - s)^{- a + 1 / 2} + 1)$
      will hold too.}}
    \end{proof}
    
    As a consequence we find the following estimate:
    
    \begin{corollary}
      \label{corollary-to-lemma}Let $f \in L^1 (\mathbbm{R}^2)$ and define $F :
      \mathbbm{R}^{2 N} \rightarrow \mathbbm{R}^{2 N}$ by $F_i (z) \assign
      \frac{1}{N}  \sum_{j \neq i} f (z_i - z_j)$, for $i = 1, \ldots, N$. Then,
      \[ | \mathbbm{E} (F (Z_{t, s}^{x, N})) -\mathbbm{E} (F (Z_{t, s}^{y, N}))
         |_{\infty} \leqslant C ((t - s)^{- 3 / 2} + 1)  \| f \|_1  | x - y
         |_{\infty} \]
      holds for $x, y \in \mathbbm{R}^{2 N}$, $t \in [0, T]$ and $Z_{t, s}^{x, N},
      Z_{t, s}^{y, N}$ given by (\ref{microscopic-mean-mixed}). 
    \end{corollary}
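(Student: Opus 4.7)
The plan is to reduce the expectation to a convolution integral involving the transition densities $u^{x,i,N}_{t,s}$, and then apply Lemma~\ref{lemma} together with Young's inequality. Since the joint density $u^{x,N}_{t,s}$ factorises as $\prod_k u^{x,k,N}_{t,s}$ (the component processes being independent) and each $u^{x,k,N}_{t,s}$ is a probability density, integrating $F_i(z)=\tfrac{1}{N}\sum_{j\neq i}f(z_i-z_j)$ against this product reduces to a bilinear form in the $i$-th and $j$-th marginals:
\[
\mathbb{E}(F_i(Z^{x,N}_{t,s})) = \frac{1}{N}\sum_{j\neq i}\int_{\mathbb{R}^2}(f\ast u^{x,j,N}_{t,s})(z)\,u^{x,i,N}_{t,s}(z)\,\mathrm{d}z.
\]

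Next, for fixed $i$ and each $j\neq i$ I would write the $x,y$-difference as a telescoping sum
\[
\int(f\ast u^{x,j})u^{x,i} - \int(f\ast u^{y,j})u^{y,i}
= \int\bigl(f\ast (u^{x,j}-u^{y,j})\bigr)u^{x,i} + \int(f\ast u^{y,j})(u^{x,i}-u^{y,i}),
\]
where I have suppressed the subscripts $t,s$. The first integral is estimated by H\"older with $L^\infty/L^1$: it is bounded by $\|f\ast(u^{x,j}-u^{y,j})\|_\infty\|u^{x,i}\|_1$, which by Young's inequality is at most $\|f\|_1\|u^{x,j}-u^{y,j}\|_\infty$. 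The second integral, again by H\"older, is bounded by $\|f\ast u^{y,j}\|_1\|u^{x,i}-u^{y,i}\|_\infty \leq \|f\|_1\|u^{y,j}\|_1\|u^{x,i}-u^{y,i}\|_\infty = \|f\|_1\|u^{x,i}-u^{y,i}\|_\infty$.

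At this point I plug in part (\ref{distance-densities-starting-at-deltas}) of Lemma~\ref{lemma}, giving $\|u^{x,k,N}_{t,s}-u^{y,k,N}_{t,s}\|_\infty \leq C((t-s)^{-3/2}+1)|x_k-y_k| \leq C((t-s)^{-3/2}+1)|x-y|_\infty$ for $k=i,j$. Summing the two bounds, summing over $j\neq i$, and dividing by $N$ yields
\[
|\mathbb{E}(F_i(Z^{x,N}_{t,s}))-\mathbb{E}(F_i(Z^{y,N}_{t,s}))| \leq C\|f\|_1((t-s)^{-3/2}+1)|x-y|_\infty,
\]
uniformly in $i$, which is the claim. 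The argument is essentially bookkeeping once Lemma~\ref{lemma} is available; the only point requiring care is choosing the right H\"older pairing in the second integral so that the $L^\infty$-bound from Lemma~\ref{lemma} (rather than an unavailable $L^1$-bound on $u^{x,i}-u^{y,i}$) is what enters, and this is handled by exploiting $\|u^{y,j}\|_1=1$.
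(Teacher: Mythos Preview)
Your proof is correct and follows essentially the same approach as the paper: factor the joint density by independence, telescope the difference $u^{x_i}u^{x_j}-u^{y_i}u^{y_j}$, estimate each piece with a H\"older/Young pairing that puts the $L^\infty$ norm on the density difference, and invoke Lemma~\ref{lemma}(\ref{distance-densities-starting-at-deltas}). The only cosmetic difference is that the paper pulls out $\|u^{x_j}-u^{y_j}\|_\infty$ directly and bounds the remaining factor by $\|f\ast u^{x_i}\|_1\le\|f\|_1$, whereas you first form $f\ast(u^{x,j}-u^{y,j})$ and bound its $L^\infty$ norm via Young; both routes yield the same estimate.
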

    
    Note that the interaction force $K^N$ is a function of this kind.
    
    \begin{proof}
      Let $i \in \{ 1, \ldots, N \}$.
      \[ \mathbbm{E} (F (Z^x_t))_i =\mathbbm{E} (F_i (Z^x_t)) = \frac{1}{N} 
         \sum_{j \neq i} \int f (z_i - z_j) u_t^{x_i} (z_i) u_t^{x_j} (z_j) \mathd
         z_i \mathd z_j . \]
      Therefore
      \begin{eqnarray*}
        | \mathbbm{E} (F (Z^x_t))_i -\mathbbm{E} (F (Z^y_t))_i | & = & \frac{1}{N}
        \left| \sum_{j \neq i} \int f (z_i - z_j)  (u_t^{x_i} (z_i) u_t^{x_j}
        (z_j) - u_t^{y_i} (z_i) u_t^{y_j} (z_j)) \mathd z_i \mathd z_j \right|\\
        & \leqslant & \frac{1}{N}  \sum_{j \neq i} \left| \int f (z_i - z_j)
        u_t^{x_i} (z_i)  (u_t^{x_j} (z_j) - u_t^{y_j} (z_j)) \mathd z_i \mathd z_j
        \right.\\
        &  & \qquad + \left. \int f (z_i - z_j) u_t^{y_j} (z_j)  (u_t^{x_i} (z_i)
        - u_t^{y_i} (z_i)) \mathd z_i \mathd z_j \right|\\
        & \leqslant & \frac{1}{N}  \sum_{j \neq i} (\| u_t^{x_j} - u_t^{y_j}
        \|_{\infty}  \| f \ast u_t^{x_i} \|_1 + \| u_t^{x_i} - u_t^{y_i}
        \|_{\infty}  \| f \ast u_t^{x_j} \|_1)\\
        & \leqslant & \frac{1}{N}  \sum_{j \neq i} C (t^{- 3 / 2} + 1)  | x - y
        |_{\infty}  (\| f \|_1  \| u_t^{x_i} \|_1 + \| f \|_1  \| u_t^{x_i}
        \|_1)\\
        & \leqslant & C (t^{- 3 / 2} + 1)  \| f \|_1  | x - y |_{\infty},
      \end{eqnarray*}
      by Lemma \ref{lemma}.
    \end{proof}
    
    We finally collect some standard estimates for the heat kernel which we
    required in the proof of Lemma \ref{lemma}.
    
    \begin{lemma}
      \label{p-norm-estimates-heat}{\tmem{($p$-norm estimates of the heat
      kernel)}} Let $G (t, x) \assign \frac{1}{2 \pi t} \mathe \tmop{xp} \left( -
      \frac{| x |^2}{2 t} \right)$ and $p \in [1, \infty]$. Then there exists a
      constant $C > 0$ such that the following holds:
      \begin{enumerateroman}
        \item \label{estimates-centered-heat-kernel}$\| G (t) \|_p \leqslant C
        \frac{1}{t^{1 - 1 / p}}$ and $\| \nabla_x G (t) \|_p \leqslant C
        \frac{1}{t^{3 / 2 - 1 / p}}$,
        
        \item \label{distance-shifted-heat-kernels}$\| G (t, \cdot - x_0) - G (t,
        \cdot - y_0) \|_p \leqslant C \frac{| x_0 - y_0 |}{t^{3 / 2 - 1 / p}}$.
      \end{enumerateroman}
    \end{lemma}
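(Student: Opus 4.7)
The proof plan is to treat parts (\ref{estimates-centered-heat-kernel}) and (\ref{distance-shifted-heat-kernels}) in order, deriving the second from the first.

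For part (\ref{estimates-centered-heat-kernel}), I would compute the $L^p$ norms directly by exploiting the scaling of $G$. Setting $y = x / \sqrt{t}$ yields $\mathd x = t \, \mathd y$, so
\[ \| G (t) \|_p^p = \int_{\mathbbm{R}^2} \frac{1}{(2 \pi t)^p} \mathe^{- p | x |^2 / (2 t)} \mathd x = \frac{t}{(2 \pi t)^p} \int_{\mathbbm{R}^2} \mathe^{- p | y |^2 / 2} \mathd y = C_p \, t^{1 - p}, \]
giving $\| G (t) \|_p \leqslant C t^{-(1 - 1/p)}$ (with the obvious modification $\| G (t) \|_\infty = (2\pi t)^{-1}$ for $p = \infty$). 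For $\nabla_x G$, I would use the explicit identity $\nabla_x G (t, x) = - \frac{x}{t} G (t, x)$, so that $| \nabla_x G (t, x) | = \frac{|x|}{2 \pi t^2} \mathe^{- | x |^2 / (2 t)}$, and the same substitution gives
\[ \| \nabla_x G (t) \|_p^p = \frac{t^{p/2} \cdot t}{(2 \pi)^p t^{2 p}} \int_{\mathbbm{R}^2} | y |^p \mathe^{- p | y |^2 / 2} \mathd y = C_p \, t^{1 - 3 p / 2}, \]
which yields the stated $t^{-(3/2 - 1/p)}$ bound. The case $p = \infty$ follows from maximising $|x| \mathe^{-|x|^2 / (2t)}$ in $x$, giving a bound of order $t^{-3/2}$.

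For part (\ref{distance-shifted-heat-kernels}), I would reduce to the gradient estimate in part (\ref{estimates-centered-heat-kernel}) via the fundamental theorem of calculus applied along the segment from $x_0$ to $y_0$. Writing
\[ G (t, x - x_0) - G (t, x - y_0) = - \int_0^1 \nabla G (t, x - x_0 - s (y_0 - x_0)) \cdot (y_0 - x_0) \, \mathd s, \]
Minkowski's integral inequality gives
\[ \| G (t, \cdot - x_0) - G (t, \cdot - y_0) \|_p \leqslant | x_0 - y_0 | \int_0^1 \| \nabla G (t, \cdot - x_0 - s (y_0 - x_0)) \|_p \mathd s, \]
and since the $L^p$ norm is translation-invariant, each integrand equals $\| \nabla G (t) \|_p$. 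Plugging in the estimate from part (\ref{estimates-centered-heat-kernel}) then gives the claim.

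No step is really an obstacle: everything reduces to a single Gaussian integral plus scaling. The only mildly delicate point is to confirm the endpoint cases $p = 1$ and $p = \infty$ separately; for $p = \infty$ one works with the pointwise supremum of $|x| \mathe^{-|x|^2/(2t)}$, and for $p = 1$ both parts reduce to the mass of $G$ (or $|\nabla G|$) on $\mathbbm{R}^2$, which can be computed in closed form.
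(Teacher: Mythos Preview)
Your argument is correct. Part (\ref{estimates-centered-heat-kernel}) is handled exactly as in the paper, by scaling and a single Gaussian integral.

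For part (\ref{distance-shifted-heat-kernels}) you take a genuinely different route. The paper treats the endpoints separately: for $p = \infty$ it uses the pointwise gradient bound $|V(t,x)| \leqslant \|\nabla_x G(t)\|_\infty |x_0 - y_0|$, for $p = 1$ it asserts a direct computation of $\|V(t,\cdot)\|_1$, and then interpolates via $\|V\|_p \leqslant \|V\|_\infty^{(p-1)/p}\|V\|_1^{1/p}$ to cover $1 < p < \infty$. Your approach instead writes the difference as a line integral of $\nabla G$, applies Minkowski's integral inequality, and uses translation invariance of the $L^p$ norm to reduce directly to $\|\nabla G(t)\|_p$ from part (\ref{estimates-centered-heat-kernel}). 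This handles all $p \in [1,\infty]$ in one stroke and, in particular, avoids the unexplained $p=1$ computation that the paper leaves to the reader. The paper's interpolation argument, on the other hand, makes the dependence on the endpoint cases explicit and could be useful if one had sharper endpoint constants to exploit. Either way the conclusion is the same.
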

    
    \begin{proof}
      \ref{estimates-centered-heat-kernel}. $\| G (t) \|_p \leqslant C
      \frac{1}{t^{1 - 1 / p}}$ for $p \in [1, \infty]$.
      
      For $p = \infty$ the statement is clearly true.
      
      For $1 \leqslant p < \infty$
      \begin{eqnarray*}
        \| G (t) \|_p & = & \frac{1}{2 \pi t}  \left( \int \mathe \tmop{xp} \left(
        - \frac{p | x |^2}{2 t} \right) \mathd^2 x \right)^{1 / p}\\
        & = & \frac{C}{t^{1 - 1 / p}}  \left( \int \mathe \tmop{xp} (- p | y |^2)
        \mathd^2 y \right)^{1 / p}\\
        & \leqslant & \frac{C}{t^{1 - 1 / p}}  \left( \int \mathe \tmop{xp} (- |
        y |^2) \mathd^2 y \right)^{1 / p}\\
        & \leqslant & \frac{C}{t^{1 - 1 / p}} .
      \end{eqnarray*}
      Next we show that $\| \nabla_x G (t) \|_p \leqslant C \frac{1}{t^{3 / 2 - 1
      / p}}$ for $p \in [1, \infty]$. For $p = \infty$, since $a \exp (- a)$ is
      bounded, one has
      \[ | \nabla_x G (t, x) | = \left| \frac{x}{2 \pi t^2} \mathe \tmop{xp}
         \left( - \frac{| x |^2}{2 t} \right) \right| = \frac{C}{t^{3 / 2}} 
         \frac{| x |}{t^{1 / 2}} \mathe \tmop{xp} \left( - \frac{| x |^2}{2 t}
         \right) \leqslant \frac{C}{t^{3 / 2}}, \]
      for $(t, x) \in \mathbbm{R}_0^+ \times \mathbbm{R}^2$. For $1 \leqslant p <
      \infty$:
      \begin{eqnarray*}
        \| \nabla_x G (t) \|_p & = & \frac{1}{2 \pi t^2}  \left( \int | x |^p
        \mathe \tmop{xp} \left( - \frac{p | x |^2}{2 t} \right) \mathd^2 x
        \right)^{1 / p}\\
        & \leqslant & \frac{C}{t^{3 / 2 - 1 / p}}  \left( \int | y |^p \mathe
        \tmop{xp} (- p | y |^2) \mathd^2 y \right)^{1 / p}\\
        & \leqslant & \frac{C}{t^{3 / 2 - 1 / p}}  \left( \left\| | \cdot |^p
        \exp \left( - \frac{p | \cdot |^2}{2} \right) \right\|_{\infty} \right)^{1
        / p}  \left( \int \exp \left( - \frac{p | y |^2}{2} \right) \mathd^2 y
        \right)^{1 / p}\\
        & \leqslant & \frac{C}{t^{3 / 2 - 1 / p}}  \left\| | \cdot | \exp \left(
        - \frac{| \cdot |^2}{2} \right) \right\|_{\infty}  \left( \int \exp \left(
        - \frac{| y |^2}{2} \right) \mathd^2 y \right)\\
        & \leqslant & \frac{C}{t^{3 / 2 - 1 / p}} .
      \end{eqnarray*}
      \ref{distance-shifted-heat-kernels}. Let $V (t, x) \assign G (t, x - x_0) -
      G (t, x - y_0)$. For $p = \infty$, it follows from part \ref{estimates-centered-heat-kernel} that
      \[ | V (t, x) | \leqslant \| \nabla_x G (t) \|_{\infty}  | x_0 - y_0 |
         \leqslant C \frac{| x_0 - y_0 |}{t^{3 / 2}} . \]
      For $p = 1$ one can directly compute
      \[ \| V (t, \cdot) \|_1 \leqslant C \frac{| x_0 - y_0 |}{t^{1 / 2}} . \]
      For $1 < p < \infty$ then
      \begin{eqnarray*}
        \| V (t, \cdot) \|_p & \leqslant & \| V (t, \cdot) \|^{(p - 1) /
        p}_{\infty}  \| V (t, \cdot) \|^{1 / p}_1\\
        & \leqslant & C \left( \frac{| x_0 - y_0 |}{t^{3 / 2}} \right)^{(p - 1) /
        p}  \left( \frac{| x_0 - y_0 |}{t^{1 / 2}} \right)^{1 / p}\\
        & = & C \frac{| x_0 - y_0 |}{t^{3 / 2 - 1 / p}} .
      \end{eqnarray*}
    \end{proof}
    
    \section{Proof of the main theorem}\label{proof-main-theorem}
    
    In this section we prove Theorem \ref{main-result}, where we compare the
    regularised real trajectory $X^N$ given by (\ref{microscopic-eq-regularised})
    to the regularised mean-field trajectory $Y^N$ solving (\ref{microscopic-mean-regularised}) and show that both trajectories remain close with high
    probability if they start at the same point. This is done by two slightly
    different methods, depending on how big the elapsed time is. For large times
    we introduce the new process $Z^{N, X_s^N}_{t, s}$ starting at an intermediate
    time $s \in [0, t]$ and show it is close to $X_t^N$ and to $Y^N_t$. Recall
    that $Z^{N, X_s^N}_{t, s}$ is given by (\ref{microscopic-mean-mixed}) with
    initial condition $Z_{s, s}^N = X_s^N$. In order to simplify the notation we
    will omit the superindex in $Z^{N, X_s^N}_{t, s}$ refering to to the initial
    condition $X_s^N$ and denote just by $Z^N_{t, s}$ the solution of
    (\ref{microscopic-mean-mixed}) with initial condition $Z_{s, s}^N = X^N_s$. In
    particular, the identities $Z^N_{t, 0} = Y^N_t$ and $Z_{t, t}^N = X^N_t$ hold.
    Instead of directly considering the evolution of the difference $| X^N_t -
    Y^N_t |_{\infty}$ we work with a more complicated but technically convenient
    stochastic process, defined as follows: Let $T > 0$, $\alpha \in (0, 1 / 2)$
    and $\delta \assign \frac{1}{2}  \left( \frac{1}{2} - \alpha \right) > 0$. We
    consider the auxiliary process
    \begin{equation}
      J^N_t \assign \min \left\{ 1, \underset{0 \leqslant s \leqslant t}{\sup}
      \mathe^{C_N  (T - s)}  \underset{0 \leqslant \tau \leqslant s}{\sup}^{}
      (N^{\alpha} f_N (s - \tau)  | Z^N_{s, s} - Z^N_{s, \tau} |_{\infty} + N^{-
      \delta}) \right\}, \quad 0 \leqslant t \leqslant T, \label{J}
    \end{equation}
    \tmfoldedenv{where $C_N \assign 18 (\log N)^{3 / 4} $and $f_N (t) \assign \max
    \left\{ \frac{4}{t \log N + (\log N)^{- 1 / 4}}, 1
    \right\}$.}{\tmcolor{green}{Note: for $t \geqslant 0$ holds}
    
    \tmcolor{green}{\begin{tabular}{c}
      $t \log N \leqslant (\log N)^{- 1 / 4} \Rightarrow f (t) \geqslant \frac{f
      (0)}{2} = O ((\log N)^{1 / 4})$\\
      $t \log N \geqslant (\log N)^{- 1 / 4} \Rightarrow f (t) \in \left(
      \frac{2}{t \log N}, \frac{4}{t \log N} \right) \Rightarrow f = O ((t \log
      N)^{- 1})$
    \end{tabular}}}
    
    As we shall see the process $J^N_t$ helps us control the maximal distance $|
    Z^N_{s, s} - Z^N_{s, \tau} |_{\infty}$ for all intermediate times and the
    parameters in $J^N_t$ are optimised for the desired rate of convergence. We
    now explain how to express our problem in terms of this new process. For $s
    \geqslant \tau \geqslant 0$ let $a (\tau, s) \assign N^{\alpha} f_N (s - \tau)
    | Z^N_{s, s} - Z^N_{s, \tau} |_{\infty} + N^{- \delta}$. Since for each $t$
    the bound
    \[ N^{\alpha}  | X^N_t - Y^N_t |_{\infty} \leqslant \underset{0 \leqslant s
       \leqslant t}{\sup} \mathe^{C_N  (T - s)}  \underset{0 \leqslant \tau
       \leqslant s}{\sup}^{} a (\tau, s) \]
    holds true, $J^N_t < 1$ implies that$\underset{0 \leqslant s \leqslant
    t}{\sup} \mathe^{C_N  (T - s)}  \underset{0 \leqslant \tau \leqslant
    s}{\sup}^{} a (\tau, s) = J^N_t < 1$, and $| X^N_t - Y^N_t |_{\infty} < N^{-
    \alpha}$ follows. Moreover, since $\mathe^{C_N T}$ grows slower than
    $N^{\varepsilon}$ for any $\varepsilon > 0$, there exists $N_0 \in
    \mathbbm{N}$ depending on $T$ and $\alpha$ such that if $N \geqslant N_0$ then
    $J^N_0 = \mathe^{C_N T} N^{- \delta}$ is bounded by some constant, say $1 /
    2$. Therefore, we can estimate
    \begin{eqnarray*}
      \mathbbm{P} (\sup_{0 \leqslant t \leqslant T} | X^N_t - Y^N_t |_{\infty}
      \geqslant N^{- \alpha}) & \leqslant & \mathbbm{P} (J^N_t \geqslant 1)\\
      & \leqslant & \mathbbm{P} (J^N_t - J^N_0 \geqslant 1 / 2)\\
      & \leqslant & 2\mathbbm{E} (J^N_t - J^N_0)\\
      & = & 2 \int_0^t \mathbbm{E} (\partial^+_s J^N_s) \mathd s.
    \end{eqnarray*}
    The problem then reduces to finding a constant $C_{\gamma}$ for each $\gamma >
    0$ such that
    \[ \mathbbm{E} (\partial^+_t J^N_t) \leqslant C_{\gamma} N^{- \gamma} . \]
    In order to compute the right-derivative of $J^N_t$ we need the following
    lemma:
    
    \begin{lemma}
      \label{right-derivative}Let $g : [0, T] \times [0, T] \rightarrow
      \mathbbm{R}$ be a right-differentiable function and consider the function $f
      (t) \assign \sup_{0 \leqslant \tau \leqslant s \leqslant t} g (\tau, s)$ for
      $t \in [0, T]$. If the supremum of $g$ is not attained at any point of the
      diagonal $\{ (s, s) : s \in [0, T] \}$ then the right-derivative of $f$
      satisfies
      \[ \partial^+ f (t) \leqslant \max \{ 0, \partial^+_2 g (\tau, t) \}, \]
      for any $\tau \in [0, t]$ such that $(\tau, t)$ is maximal, meaning that $f
      (t) = g (\tau, t)$. Here the right-derivative $\partial_t^+$ for functions
      $\varphi$ in one variable is defined as
      \[ \partial_t^+ \varphi (t) \assign \lim_{h \rightarrow 0^+} \frac{\varphi
         (t + h) - \varphi (t)}{h} . \]
      For functions in several variables we denote by $\partial^+_i$ the partial
      right-derivative in the $i$-th variable.
    \end{lemma}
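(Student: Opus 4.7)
The strategy is to bound $f(t+h)-f(t)$ by isolating the contribution of the newly added slice of the domain. Write $f(t+h) = \max\{f(t), M(h)\}$, where $M(h) \assign \sup\{g(\tau', s') : 0 \leqslant \tau' \leqslant s',\ t < s' \leqslant t+h\}$ captures the supremum over the thin strip $s' \in (t, t+h]$. From this decomposition, $f(t+h) - f(t) \leqslant \max\{0, M(h) - f(t)\}$, so it suffices to estimate $M(h) - f(t)$ and divide by $h$.

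Next I would exploit the off-diagonal hypothesis to restrict the possible location of maximizers of $M(h)$. By right-differentiability (which implies continuity from the right in $s$) and continuity of $g$, points $(\tau', s')$ with both coordinates in $(t, t+h]$ lie near the diagonal and take values close to $g(t,t)$, which is strictly less than $f(t)=g(\tau,t)$ by hypothesis. Consequently, for $h$ small enough any maximizer $(\tau_h, s_h)$ achieving $M(h) \geqslant f(t)$ must satisfy $\tau_h \leqslant t$. In that regime $(\tau_h, t)$ is admissible for $f(t)$, so $f(t) \geqslant g(\tau_h, t)$, and right-differentiability in the second variable yields
\[
M(h) - f(t) \;\leqslant\; g(\tau_h, s_h) - g(\tau_h, t) \;\leqslant\; h\, \max\{0, \partial_2^+ g(\tau_h, t)\} + o(h).
\]

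Dividing by $h$ and letting $h \to 0^+$, a compactness/subsequence argument extracts $\tau_{h_n} \to \tau^* \in [0, t]$ along some sequence $h_n \to 0^+$, with $g(\tau^*, t) = f(t)$ by upper semi-continuity of $g(\cdot, t)$. Continuity of $\partial_2^+ g$ then delivers $\partial^+ f(t) \leqslant \max\{0, \partial_2^+ g(\tau^*, t)\}$. The main obstacle is that $\tau^*$ produced by the compactness step need not coincide with the specific $\tau$ fixed in the statement; the lemma as phrased is therefore really asserting the bound for \emph{some} maximizer (the one obtained as this limit), which is sufficient for the application in the proof of the main theorem, but would require a uniqueness assumption on maximizers at time $t$ if one wanted the literal ``for any $\tau$'' reading. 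A secondary technical subtlety is the uniform $o(h)$ remainder in the right-derivative estimate, which follows from continuity of $(\tau', s) \mapsto \partial_2^+ g(\tau', s)$ on the compact set $[0, t] \times [0, T]$.
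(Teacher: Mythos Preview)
Your approach is genuinely different from the paper's. The paper argues by a two-case distinction: if the supremum over the closed triangle is attained at some $(\tau_t, s_t)$ with $s_t < t$, right-continuity forces $f$ to be locally constant to the right, so $\partial^+ f(t) = 0$; if instead every maximizer has $s_t = t$, they invoke a Lagrange-multiplier heuristic (the gradient of $g$ at the boundary maximizer must be proportional to the outward normal $(0,1)^t$) to conclude $\partial_1^+ g(\tau_t,t)=0$ and $\partial_2^+ g(\tau_t,t)>0$. Your strip decomposition $f(t+h)=\max\{f(t),M(h)\}$ followed by a compactness extraction of $\tau^*$ is more explicit about where the bound actually comes from, and in particular it makes visible the issue you flag: the argument naturally produces \emph{some} maximizer $\tau^*$ for which the inequality holds, not every one, whereas the paper's phrasing ``for any $\tau$'' is ambiguous on this point (their Lagrange argument does not close this gap either). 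Your route also surfaces the need for continuity of $\partial_2^+ g$ to get the uniform $o(h)$ remainder and to pass to the limit along $\tau_{h_n}\to\tau^*$; this regularity is not part of the lemma's hypotheses but is available in the application. In short, the paper's proof is shorter and more geometric but leaves the final inequality implicit; yours is longer but isolates the analytic assumptions actually needed and pinpoints the ``some versus any'' subtlety, which the paper glosses over.
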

    
    \begin{proof}
      Let us denote by $(\tau_t, s_t)$ any maximal point of $g$ up to time $t$,
      i.e., any point such that $f (t) = g (\tau_t, s_t)$. We consider two cases.
      Assume first there exist $\tau_t, s_t$ satisfying the condition $0 \leqslant
      \tau_t \leqslant s_t < t$ such that $f (t) = g (\tau_t, s_t)$. In this
      situation it is clear (since $g$ is a right-continuous function) that $g
      (\tau_t, s_t)$ is also the supremum of $g$ over $0 \leqslant \tau \leqslant
      s \leqslant t + h$ for small enough $h > 0$. Therefore, $f (t + h) = f (t)$
      for $h$ in a small right-neighborhood of $0$ and so is $\partial^+ f (t) =
      0$.
      
      Next assume that the previous situation does not hold, that is, that the
      supremum of $g$ over $0 \leqslant \tau \leqslant s \leqslant t$ is only
      attained when $s = t$. In this case we also know that the first coordinate
      $\tau_t$ of any maximal point must satisfy $\tau_t < s_t = t$, since we
      assumed that the supremum is not attained on the diagonal. Using Lagrange
      multipliers one can easily deduce that the partial right-derivatives at any
      maximal point satisfy $\partial^+_1 g (\tau_t, t) = 0$ and $\partial^+_2 g
      (\tau_t, t) > 0$: The level curve through $(\tau_t, t)$ is tangent to the
      border of the triangle $\{ (\tau, s) \in \mathbbm{R}^2 : 0 \leqslant \tau
      \leqslant s \leqslant t \}$ where we are looking for the supremum. In this
      situation all maximal points $(\tau_t, t)$ lie on the horizontal line $s =
      t$ which is part of the triangle's border. This means that the
      right-gradient $(\partial^+_1, \partial^+_2)^t$ of $g$ at any such point
      $(\tau_t, t)$ is proportional to the vector $(0, 1)^t$, the outer normal to
      the triangle at $(\tau_t, t)$.
    \end{proof}
    
    Coming back to the computation of the right-derivative of $J^N_t$ (\ref{J}),
    note that we can write it as
    \[ J^N_t = \min \{ 1, \sup_{0 \leqslant \tau \leqslant s \leqslant t} g (\tau,
       s) \}, \]
    where
    \[ g (\tau, s) \assign \mathe^{C_N  (T - s)}  (N^{\alpha} f_N (s - \tau)  |
       Z^N_{s, s} - Z^N_{s, \tau} |_{\infty} + N^{- \delta}) . \]
    It is clear that $\partial^+_t J^N_t \leqslant \max \{ 0, \partial^+_t \sup_{0
    \leqslant \tau \leqslant s \leqslant t} g (\tau, s) \}$ holds. Moreover, the
    function $g$ satisfies the conditions of Lemma \ref{right-derivative} above,
    since the diagonal points are minimal for $g$ and therefore the supremum is
    not attained there. We can then apply the lemma to the function $\sup_{0
    \leqslant \tau \leqslant s \leqslant t} g (\tau, s)$ and find the following
    estimate, which holds for any maximal point $(\tau, t)$ of $g$, $0 \leqslant
    \tau \leqslant t$:
    \begin{eqnarray*}
      \partial^+_t J^N_t & \leqslant & \max \{ 0, - \mathe^{C_N (T - t)}  (C_N a
      (\tau, t) - N^{\alpha} f'_N (t - \tau)  | Z^N_{t, t} - Z^N_{t, \tau} |)
      \nobracket\\
      &  & \quad \qquad + \mathe^{C_N (T - t)} N^{\alpha} f_N (t - \tau) 
      \nobracket | K^N (Z^N_{t, t}) - \overline{K}^N_t (Z^N_{t, \tau}) | \}\\
      & = : & \max \{ 0, h (\tau, t) \} .
    \end{eqnarray*}
    Let us continue by trivially reducing the problem to a smaller set where $|
    Z^N_{s, s} - Z^N_{s, \tau} |_{\infty} \leqslant N^{- \alpha}$ holds for each
    $0 \leqslant \tau \leqslant s \leqslant t$. Consider the event $\mathcal{A}_t
    \assign \{ \partial^+_t J^N_t \geqslant 0 \}$. Since $\mathcal{A}_t \subseteq
    \{ h (\tau, t) \geqslant \partial^+_t J^N_t \}$ it holds that
    \begin{equation}
      \mathbbm{E} (\partial^+_t J^N_t) =\mathbbm{E} (\partial^+_t J^N_t |
      \mathcal{A}^c_t) +\mathbbm{E} (\partial^+_t J^N_t | \mathcal{A}_t) \leqslant
      0 +\mathbbm{E} (\partial^+_t J^N_t | \mathcal{A}_t) \leqslant \mathbbm{E} (h
      (\tau, t) |\mathcal{A}_t) . \label{trivial-bound}
    \end{equation}
    We shall prove that the latter is bounded by $C_{\gamma} N^{- \gamma}$ for
    some constant $C_{\gamma} \geqslant 0$. Note that in $\mathcal{A}_t$ one has
    $J^N_t \leqslant 1$ and in particular $\sup_{0 \leqslant \tau \leqslant s
    \leqslant t} | Z^N_{s, s} - Z^N_{s, \tau} |_{\infty} \leqslant N^{- \alpha}$
    holds. As a first estimate we can prove that in this set the bound $h (\tau,
    t)$ of the derivative $\partial^+_t J^N_t$ grows slower than $N^2$: Using that
    $| f'_N (t - \tau) | = C \log Nf^2_N (t - \tau) \leqslant C (\log N)^{3 / 2}$
    and $| K^N (Z^N_{t, t}) - \overline{K}^N_t (Z^N_{t, \tau}) | \leqslant
    CN^{\alpha}$ also hold, we find that in $\mathcal{A}_t$ is
    \begin{eqnarray}
      h (\tau, t) & \leqslant & \mathe^{C_N (T - t)}  (C_N a (\tau, t) +
      N^{\alpha}  | f'_N (t - \tau) |  | Z^N_{t, t} - Z^N_{t, \tau} |) \nonumber\\
      &  & + \mathe^{C_N (T - t)} N^{\alpha} f_N (t - \tau)  | K^N (Z^N_{t, t}) -
      \overline{K}^N_t (Z^N_{t, \tau}) | \nonumber\\
      & \leqslant & C \mathe^{C_N (T - t)}  ((\log N)^{3 / 4} + N^{\alpha}  (\log
      N)^{3 / 2} N^{- \alpha} + N^{\alpha}  (\log N)^{1 / 4} N^{\alpha})
      \nonumber\\
      & \leqslant & C \mathe^{C_N T} N^{3 / 2} < CN^2 .  \label{bound-in-A}
    \end{eqnarray}
    In order to prove $\mathbbm{E} (\partial^+_t J^N_t |\mathcal{A}_t) \leqslant
    C_{\gamma} N^{- \gamma}$ we distinguish between two cases depending on the
    difference $t - \tau$:
    
    \
    
    {\noindent}{\tmem{Case 1: $t - \tau \leqslant 2 (\log N)^{- 1}$.}}
    
    Here we show that $h (\tau, t) \leqslant 0$ holds outside a set of
    exponentially small measure and use that the regularised force $K^N$ is
    locally Lipschitz with constant of order $\log N$, which is a consequence of
    Lemma \ref{lipschitz-bound-G} and the law of large numbers (Proposition
    \ref{loln-general}): Note that in the notation of Lemma \ref{lipschitz-bound-G}, $K^N$ is equal to $K^{\nu (N)}$ for $\nu (N) \assign N^{\alpha}$ and so it
    is locally Lipschitz with bound $L^{\nu (N)}$, which was defined as
    \[ L^{\nu (N)}_i (y_1, \ldots, y_N) = - \frac{\chi}{N}  \sum_{j \neq i} l^{\nu
       (N)} (y_i - y_j) \]
    for
    \[ l^{\nu} (y) = \left\{ \begin{array}{ll}
         \frac{16}{| y |^2}, & | y | \geqslant 4 \nu^{- 1}\\
         \nu^2, & | y | \leqslant 4 \nu^{- 1}
       \end{array} \right. . \]
    Let us just write $L^N$ instead of $L^{\nu (N)}$ and denote by
    $\overline{L}_t^N$ the averaged version of $L^N$ given by
    \[ \overline{L}_{t, i}^N (y_1, \ldots, y_N) \assign - \chi (l^{\nu (N)} \ast
       \rho^N_t) (y_i) . \]
    Furthermore we consider the set
    \begin{equation}
      \mathcal{B}^1_t \assign \{ | K^N (Y^N_t) - \overline{K}^N_t (Y^N_t) |
      \leqslant N^{- (\alpha + \delta)} \} \cap \{ | L^N (Y^N_t) -
      \overline{L}^N_t (Y^N_t) | \leqslant C \} . \label{set-B1}
    \end{equation}
    In this event the real force $K^N$ acting on the i.i.d. particles $Y^N_t$ is
    well approximated by the mean-field force $\overline{K}^N_t$, which is
    globally Lipschitz. Moreover, the local Lipschitz constant $L^N$ of $K^N$ is
    of order $O (\log N)$ in $\mathcal{B}^1_t$. Indeed, since $l^{\nu (N)} =
    l_1^{\nu (N)} + l_{\infty}^{\nu (N)} \in L^1 (\mathbbm{R}^2) + L^{\infty}
    (\mathbbm{R}^2)$ with integrable part satisfying $\| l^{\nu (N)}_1 \|_1 = O
    (\log N)$ and $\rho_t^N$ is bounded in $L^1 (\mathbbm{R}^2) \cap L^{\infty}
    (\mathbbm{R}^2)$ uniformly in $N$ and $t \in [0, T]$, it holds that $\|
    \overline{L}_t^N \|_{\infty}$ is of order $O (\log N)$. Consequently the same
    estimate holds for $L^N$ in the set $\mathcal{B}^1_t$. Let us recall
    (\ref{trivial-bound}) and write
    \begin{equation}
      \mathbbm{E} (\partial^+_t J^N_t) \leqslant \mathbbm{E} (h (\tau, t) |
      \mathcal{A}_t) =\mathbbm{E} (h (\tau, t) |\mathcal{A}_t
      \backslash\mathcal{B}^1_t) +\mathbbm{E} (h (\tau, t) |\mathcal{A}_t \cap
      \mathcal{B}^1_t) . \label{split-with-B1}
    \end{equation}
    As a consequence of the law of large numbers (Proposition \ref{loln-general})
    the measure of the event $\Omega \backslash\mathcal{B}_t^1$ decays to zero as
    $N$ grows to infinity faster than any polynomial in $N$ (see Proposition
    \ref{measure-exceptional-sets} at the end of this section). Since $h (\tau,
    t)$ grows in the set $\mathcal{A}_t$ polynomially in $N$ only by estimate
    (\ref{bound-in-A}), we can find a positive constant $C_{\gamma}$ such that the
    first term in (\ref{split-with-B1}) satisfies
    \[ \mathbbm{E} (h (\tau, t) |\mathcal{A}_t \backslash\mathcal{B}^1_t)
       \leqslant C_{\gamma} N^{- \gamma} . \]
    It is therefore enough to prove that $h (\tau, t) \leqslant 0$ holds in
    $\mathcal{A}_t \cap \mathcal{B}^1_t$.
    
    Note that $h (\tau, t) \leqslant 0$ holds if for each $(\tau, t)$ where the
    supremum is attained the following inequality is true:
    \begin{eqnarray}
      f_N (t - \tau)  | K^N (Z^N_{t, t}) - \overline{K}^N_t (Z^N_{t, \tau}) | &
      \leqslant & - f'_N (t - \tau)  | Z^N_{t, t} - Z^N_{t, \tau} | \nonumber\\
      &  & + C_N  (f_N (t - \tau)  | Z^N_{t, t} - Z^N_{t, \tau} | + N^{- (\alpha
      + \delta)}) .  \label{iff-derivative-negative}
    \end{eqnarray}
    We next estimate the term $| K^N (Z^N_{t, t}) - \overline{K}^N_t (Z^N_{t,
    \tau}) |$ in the set in $\mathcal{A}_t \cap \mathcal{B}^1_t$ by splitting in
    three:
    \begin{eqnarray*}
      | K^N (Z^N_{t, t}) - \overline{K}^N_t (Z^N_{t, \tau}) | & \leqslant & | K^N
      (Z^N_{t, t}) - K^N (Z^N_{t, 0}) | + | K^N (Z^N_{t, 0}) - \overline{K}^N_t
      (Z^N_{t, 0}) |\\
      &  & + | \overline{K}^N_t (Z^N_{t, 0}) - \overline{K}^N_t (Z^N_{t, \tau}) |
      .
    \end{eqnarray*}
    The last term is the least problematic, since the function $\overline{K}^N_t$
    is globally Lipschitz. As noted before, the term in the middle is small in the
    event $\mathcal{B}^1_t$. For the first term we use that in this event the
    force $K^N$ is locally Lipschitz: we can apply Lemma \ref{lipschitz-bound-G}
    with $\nu (N) = N^{\alpha}$ and, since $| Z^N_{t, t} - Z^N_{t, 0} | \leqslant
    N^{- \alpha}$ in $\mathcal{A}_t$ and $| L^N (Y^N_t) - \overline{L}_t^N (Y^N_t)
    | \leqslant C$ in $\mathcal{B}^1_t$, we find
    \begin{eqnarray*}
      | K^N (Z^N_{t, t}) - K^N (Z^N_{t, 0}) | & \leqslant & 2 | L^N (Z^N_{t, 0}) |
      | Z^N_{t, t} - Z^N_{t, 0} | \leqslant 2 (C + \| \overline{L}_t^N
      \|_{\infty})  | Z^N_{t, t} - Z^N_{t, 0} |\\
      & \leqslant & 2 (C + \log N)  | Z^N_{t, t} - Z^N_{t, 0} | .
    \end{eqnarray*}
    Consequently,
    \begin{eqnarray*}
      | K^N (Z^N_{t, t}) - \overline{K}^N_t (Z^N_{t, \tau}) | & \leqslant & | K^N
      (Z^N_{t, t}) - K^N (Z^N_{t, 0}) | + | K^N (Z^N_{t, 0}) - \overline{K}^N_t
      (Z^N_{t, 0}) |\\
      &  & + | \overline{K}^N_t (Z^N_{t, 0}) - \overline{K}^N_t (Z^N_{t, \tau})
      |\\
      & \leqslant & 2 (C + \log N)  | Z^N_{t, t} - Z^N_{t, 0} | + N^{- (\alpha +
      \delta)} + L | Z^N_{t, t} - Z^N_{t, 0} |\\
      & \leqslant & (2 \log N + 2 C + L)  | Z^N_{t, t} - Z^N_{t, 0} | + L |
      Z^N_{t, t} - Z^N_{t, \tau} | + N^{- (\alpha + \delta)},
    \end{eqnarray*}
    where $L$ is the Lipschitz constant of $\overline{K}^N_t$ (uniform in $t \in
    [0, T]$). Now observe that, by the definition of $J^N_t$, $f_N (t - s)  |
    Z^N_{t, t} - Z^N_{t, s} | \leqslant f_N (t - \tau)  | Z^N_{t, t} - Z^N_{t,
    \tau} |$ holds for each $0 \leqslant s \leqslant t$. Therefore, we can choose
    a maybe greater $N_0$, depending now also on the Lipschitz constant $L$, such
    that for $N \geqslant N_0$ we find
    \begin{eqnarray*}
      | K^N (Z^N_{t, t}) - \overline{K}^N_t (Z^N_{t, \tau}) | & \leqslant & 2 (C +
      \log N)  \frac{f_N (t - \tau)}{f_N (t)}  | Z^N_{t, t} - Z^N_{t, \tau} | + L
      | Z^N_{t, t} - Z^N_{t, \tau} | + N^{- (\alpha + \delta)}\\
      & \leqslant & 3 \log Nf_N (t - \tau)  | Z^N_{t, t} - Z^N_{t, \tau} | + N^{-
      (\alpha + \delta)}\\
      & \leqslant & - \frac{f'_N (t - \tau)}{f_N (t - \tau)}  | Z^N_{t, t} -
      Z^N_{t, \tau} | + \frac{C_N}{f_N (t - \tau)} N^{- (\alpha + \delta)},
    \end{eqnarray*}
    which proves (\ref{iff-derivative-negative}). Here we used that $1 \leqslant f
    \leqslant C_N$ and $3 \log N (f_N (t - \tau))^2 \leqslant - f'_N (t - \tau)$.
    Consequently $h (\tau, t) \leqslant 0$ holds in the set $\mathcal{A}_t \cap
    \mathcal{B}^1_t$ and
    \[ \mathbbm{E} (\partial^+_t J^N_t) \leqslant \mathbbm{E} (h (\tau, t)
       |\mathcal{A}_t \backslash\mathcal{B}^1_t) +\mathbbm{E} (h (\tau, t)
       |\mathcal{A}_t \cap \mathcal{B}^1_t) \leqslant C_{\gamma} N^{- \gamma} \]
    as required.
    
    \
    
    {\noindent}{\tmem{Case 2: $t - \tau \geqslant 2 (\log N)^{- 1}$}}.
    
    The key now is to consider the process $Z^N_{t, s}$ starting at an appropiate
    intermediate time $s \in [0, t]$ and show that it is close to both the real
    trajectory $X_t^N$ and the mean-field trajectory $Y_t^N$. That it is close to
    the real trajectory is proven by the same argument as in the previous case,
    since the elapsed time $t - s$ is small enough. We compare $Z^N_{t, s}$ to the
    mean-field trajectory by an entirely different argument: we don't look at
    their trajectories but at their densities, which are close in $L^{\infty}$
    thanks to the diffusive effect of the Brownian Motion (Lemma 9 and Corollary
    10). We also need to split the interaction force $K^N$ into $K^N = K^N_1 +
    K^N_2$, where $K^N_2$ is the result of choosing a wider cutoff of order $(\log
    N)^{- 3 / 2}$ in the force kernel $k$ and $K^N_1 \assign K^N - K^N_2$. More
    precisely, let $k^N_2 \assign k^{\nu_2 (N)}$ for $\nu_2 (N) \assign (\log
    N)^{- 3 / 2}$ and define $k^N_1 \assign k^N - k^N_2$. The $i$-th components of
    $K^N_1$ and $K^N_2$ are then given by
    \begin{eqnarray}
      (K^N_1)_i (x_1, \ldots, x_N) & \assign & - \frac{\chi}{N}  \sum_{j \neq i}
      k_1^N (x_i - x_j) \nonumber\\
      \tmop{and} &  &  \nonumber\\
      (K^N_2)_i (x_1, \ldots, x_N) & \assign & - \frac{\chi}{N}  \sum_{j \neq i}
      k_2^N (x_i - x_j) .  \label{def-F1-F2}
    \end{eqnarray}
    We denote the local Lipschitz bound for $K^N_2$ given by Lemma \ref{lipschitz-bound-G} as $L^N_2 \assign L^{\nu_2 (N)}$ and its averaged version as
    $\overline{L}^N_{2, t}$, defined analogously to $\overline{L}^N_t$. Let us
    denote by $\mathcal{B}^2_t$ the intersection of the set $\mathcal{B}_t^1$ from
    the previous case and the set $\{ | L_2^N (Y^N_t) - \overline{L}_2^N (Y^N_t) |
    \leqslant C \}$ concerning the Lipschitz bound of the second part $K^N_2$ of
    $K^N$:
    \begin{equation}
      \mathcal{B}^2_t \assign \mathcal{B}_t^1 \cap \{ | L^N_2 (Y^N_t) -
      \overline{L}^N_{2, t} (Y^N_t) | \leqslant C \} . \label{set-B2}
    \end{equation}
    We write again
    \[ \mathbbm{E} (\partial^+_t J^N_t) \leqslant \mathbbm{E} (h (\tau, t) |
       \mathcal{A}_t) =\mathbbm{E} (h (\tau, t) |\mathcal{A}_t
       \backslash\mathcal{B}^2_t) +\mathbbm{E} (h (\tau, t) |\mathcal{A}_t \cap
       \mathcal{B}^2_t) . \]
    The first term is bounded as in the previous section: due to the exponential
    decay of the measure of $\mathcal{A}_t \backslash\mathcal{B}^2_t$ (proven in
    Proposition \ref{measure-exceptional-sets} below) in contrast to the milder
    polynomial growth of $h (\tau, t)$, we find a constant $C_{\gamma} \geqslant
    0$ such that
    \[ \mathbbm{E} (h (\tau, t) |\mathcal{A}_t \backslash\mathcal{B}^2_t)
       \leqslant C_{\gamma} N^{- \gamma} . \]
    It remains to show that also $\mathbbm{E} (h (\tau, t) |\mathcal{A}_t \cap
    \mathcal{B}^2_t) \leqslant C_{\gamma} N^{- \gamma}$ holds (for a possibly
    different constant $C_{\gamma}$, which we don't rename for simplicity of
    notation).
    
    Notice that $\mathbbm{E} (h (\tau, t) |\mathcal{A}_t \cap \mathcal{B}^2_t)
    \leqslant C_{\gamma} N^{- \gamma}$ holds if the following inequality is true:
    \begin{eqnarray}
      f_N (t - \tau) \mathbbm{E} (| K^N (Z^N_{t, t}) - \overline{K}^N_t (Z^N_{t,
      \tau}) | | \mathcal{A}_t \cap \mathcal{B}^2_t) & \leqslant & - f'_N (t -
      \tau) \mathbbm{E} (| Z^N_{t, t} - Z^N_{t, \tau} | | \mathcal{A}_t \cap
      \mathcal{B}^2_t) \nonumber\\
      &  & + C_N f_N (t - \tau) \mathbbm{E} (| Z^N_{t, t} - Z^N_{t, \tau} | |
      \mathcal{A}_t \cap \mathcal{B}^2_t) \nonumber\\
      &  & + C_N N^{- (\alpha + \delta)} \mathbbm{P} (\mathcal{A}_t \cap
      \mathcal{B}^2_t) \nonumber\\
      &  & + C_{\gamma} N^{- \gamma} . \label{theorem-master-inequality} 
    \end{eqnarray}
    To this end we write as before
    \begin{eqnarray}
      | K^N (Z^N_{t, t}) - \overline{K}^N_t (Z^N_{t, \tau}) | & \leqslant & | K^N
      (Z^N_{t, t}) - K^N (Z^N_{t, 0}) | + | K^N (Z^N_{t, 0}) - \overline{K}^N_t
      (Z^N_{t, 0}) | \nonumber\\
      &  & + | \overline{K}^N_t (Z^N_{t, 0}) - \overline{K}^N_t (Z^N_{t, \tau}) |
      .  \label{first-split}
    \end{eqnarray}
    The last two terms can be bounded in the same way as in the previous section,
    but for $| K^N (Z^N_{t, t}) - K^N (Z^N_{t, 0}) |$ we can no longer use the
    corresponding Lipschitz bound from Lemma \ref{lipschitz-bound-G} directly.
    Here we need to add the intermediate time $s = t - (\log N)^{- 3 / 2}$ and to
    split the force into $K^N = K^N_1 + K^N_2$ as described in (\ref{def-F1-F2}),
    which results in
    \begin{eqnarray}
      | K^N (Z^N_{t, t}) - K^N (Z^N_{t, 0}) | & \leqslant & | K^N (Z^N_{t, t}) -
      K^N (Z^N_{t, s}) | + | K^N (Z^N_{t, s}) - K^N (Z^N_{t, 0}) | \nonumber\\
      & \leqslant & | K^N (Z^N_{t, t}) - K^N (Z^N_{t, s}) | + | K^N_1 (Z^N_{t,
      s}) - K^N_1 (Z^N_{t, 0}) | \nonumber\\
      &  & + | K^N_2 (Z^N_{t, s}) - K^N_2 (Z^N_{t, 0}) | .  \label{split-the-force}
    \end{eqnarray}
    We can now use the Lipschitz bound for the first and third terms in
    (\ref{split-the-force}): In $\mathcal{A}_t \cap \mathcal{B}^2_t$ it holds that
    \begin{eqnarray}
      | K^N (Z^N_{t, t}) - K^N (Z^N_{t, s}) | & \leqslant & 2 | L^N (Z^N_{t, s}) |
      | Z^N_{t, t} - Z^N_{t, s} | \nonumber\\
      & \leqslant & 6 (C + \| \overline{L}_t^N \|_{\infty})  | Z^N_{t, t} -
      Z^N_{t, s} | \nonumber\\
      & \leqslant & 12 \log N | Z^N_{t, t} - Z^N_{t, s} | \nonumber\\
      & \leqslant & 12 \log N \frac{f_N (t - \tau)}{f_N (t - s)}  | Z^N_{t, t} -
      Z^N_{t, \tau} | \nonumber\\
      & \leqslant & 6 (\log N)^{3 / 4} f_N (t - \tau)  | Z^N_{t, t} - Z^N_{t,
      \tau} |,  \label{split-1}
    \end{eqnarray}
    since $f_N (s - r)  | Z_{s, s} - Z_{s, r} | \leqslant f_N (t - \tau)  |
    Z^N_{t, t} - Z^N_{t, \tau} |$ is true for each $0 \leqslant r \leqslant s
    \leqslant t$ and also $f_N (t - s) \geqslant 2 (\log N)^{1 / 4}$. We
    analogously obtain the following estimate for the third term in (\ref{split-the-force})
    \begin{eqnarray}
      | K^N_2 (Z^N_{t, s}) - K^N_2 (Z^N_{t, 0}) | & \leqslant & 2 | L_2^N (Z^N_{t,
      0}) |  | Z^N_{t, s} - Z^N_{t, 0} | \nonumber\\
      & \leqslant & 2 (\| \overline{L}_{2, t}^N \|_{\infty} + C)  | Z^N_{t, s} -
      Z^N_{t, 0} | \nonumber\\
      & \leqslant & 4 \log \log N | Z^N_{t, s} - Z^N_{t, 0} | \nonumber\\
      & \leqslant & 4 \log \log Nf_N (t - \tau)  \left( \frac{1}{f_N (t - s)} +
      \frac{1}{f_N (t)} \right)  | Z^N_{t, t} - Z^N_{t, \tau} | \nonumber\\
      & \leqslant & 8 \log \log Nf_N (t - \tau)  | Z^N_{t, t} - Z^N_{t, \tau} | .
      \label{split-3}
    \end{eqnarray}
    The estimate provided by the local Lipschitz bound from Lemma \ref{lipschitz-bound-G} works for $| K^N (Z^N_{t, t}) - K^N (Z^N_{t, s}) |$ and $| K^N_2
    (Z^N_{t, s}) - K^N_2 (Z^N_{t, 0}) |$ because in the first term the elapsed
    time $t - s$ is small enough (so we can compensate the $\log N$ order coming
    from the derivative of $K^N$ with $(f_N (t - s))^{- 1}$) and in the other one
    the force $K^N_2$ has a milder derivative which is of order $\log \log N$
    only. For the remaining term $| K^N_1 (Z^N_{t, s}) - K^N_1 (Z^N_{t, 0}) |$ in
    (\ref{split-the-force}) we use that the probability densities of $Z^N_{t, s}$
    and $Z^N_{t, 0}$ are close in $L^{\infty}$ by Lemma \ref{lemma} and its
    Corollary \ref{corollary-to-lemma}. Note that in order to complete the last
    argument we need independence of the particles and, although the mean-field
    particles $Z^{1, N}_{t, 0}, \ldots, Z^{N, N}_{t, 0}$ are pairwise independent,
    this does not hold for the particles $Z^{1, N}_{t, s}, \ldots, Z^{N, N}_{t,
    s}$ (recall that by definition $Z^N_{t, s} = Z^{X^N_s}_{t, s}$ and that
    $Z^N_{t, 0} = Z^{Y^N_s}_{t, s}$ for $t \geqslant s$). For this reason, instead
    of considering the processes starting at time $s$ at the r.v. $X^N_s$ and
    $Y^N_s$ respectively, it is convenient to first fix the starting points at
    time $s$ to be some given points $x, y \in \mathbbm{R}^{2 N}$ and to compare
    the corresponding (product distributed) processes $Z^{x, N}_{t, s}$ and $Z^{y,
    N}_{t, s}$. This being done, we can recover the original processes $Z^N_{t,
    s}$ and $Z^N_{t, 0}$ by writting $\mathbbm{E} (| K^N_1 (Z^N_{t, s}) - K^N_1
    (Z^N_{t, 0}) | |\mathcal{A}_t \cap \mathcal{B}^2_t)$ as
    \begin{equation}
      \int_{(x, y) \in (Z^N_{s, s}, Z^N_{s, 0}) (\mathcal{A}_t \cap
      \mathcal{B}^2_t)} \mathbbm{E} (| K^N_1 (Z^{x, N}_{t, s}) - K^N_1 (Z^{y,
      N}_{t, s}) | |\mathcal{A}_t \cap \mathcal{B}^2_t) \mathbbm{P} (X^N_s \in
      \mathd x, Y^N_s \in \mathd y) . \label{integral-delta}
    \end{equation}
    Let us then fix $x, y \in \mathbbm{R}^{2 N}$ and write
    \begin{eqnarray*}
      \mathbbm{E} (| K^N_1 (Z^{x, N}_{t, s}) - K^N_1 (Z^{y, N}_{t, s}) |
      |\mathcal{A}_t \cap \mathcal{B}^2_t) & = & \mathbbm{E} (| K^N_1 (Z^{x,
      N}_{t, s}) - K^N_1 (Z^{y, N}_{t, s}) | | (\mathcal{A}_t \cap
      \mathcal{B}^2_t) \backslash\mathcal{C}_t^{x, y})\\
      &  & +\mathbbm{E} (| K^N_1 (Z^{x, N}_{t, s}) - K^N_1 (Z^{y, N}_{t, s}) |
      |\mathcal{A}_t \cap \mathcal{B}^2_t \cap \mathcal{C}_t^{x, y}),
    \end{eqnarray*}
    where we introduced the new set
    \begin{eqnarray}
      \mathcal{C}^{x, y}_t & \assign & \{ | K^N_1 (Z^{x, N}_{t, s}) -\mathbbm{E}
      (K^N_1 (Z^{x, N}_{t, s})) | \leqslant N^{- (\alpha + \delta)} \} \label{set-C} \nonumber\\
      &  & \cap \{ | K^N_1 (Z^{y, N}_{t, s}) -\mathbbm{E} (K^N_1 (Z^{y, N}_{t,
      s})) | \leqslant N^{- (\alpha + \delta)} \}, 
    \end{eqnarray}
    for $s = t - (\log N)^{- 3 / 2}$. By Proposition \ref{measure-exceptional-sets} below the measure of the set $\Omega \backslash\mathcal{C}_t^{x, y}$ is
    exponentially small. Also note that the bound given in Proposition
    \ref{measure-exceptional-sets} does not depend of the points $x, y$. Since
    $K^N_1$ is of order $O (N^{\alpha})$ we can find a constant $C_{\gamma} > 0$
    such that
    \[ \mathbbm{E} (| K^N_1 (Z^{x, N}_{t, s}) - K^N_1 (Z^{y, N}_{t, s}) | |
       (\mathcal{A}_t \cap \mathcal{B}^2_t) \backslash\mathcal{C}_t^{x, y})
       \leqslant C_{\gamma} N^{- \gamma} . \]
    Next we estimate $| K^N_1 (Z^{x, N}_{t, s}) - K^N_1 (Z^{y, N}_{t, s}) |$ in
    the set $\mathcal{A}_t \cap \mathcal{B}^2_t \cap \mathcal{C}_t^{x, y}$. We
    write
    \begin{eqnarray*}
      | K^N_1 (Z^{x, N}_{t, s}) - K^N_1 (Z^{y, N}_{t, s}) | & \leqslant & | K^N_1
      (Z^{x, N}_{t, s}) -\mathbbm{E} (K^N_1 (Z^{x, N}_{t, s})) | + | K^N_1 (Z^{y,
      N}_{t, s}) -\mathbbm{E} (K^N_1 (Z^{y, N}_{t, s})) |\\
      &  & + | \mathbbm{E} (K^N_1 (Z^{x, N}_{t, s})) -\mathbbm{E} (K^N_1 (Z^{y,
      N}_{t, s})) | .
    \end{eqnarray*}
    In $\mathcal{C}^{x, y}_t$ the first two terms are bounded. For the remaining
    term $| \mathbbm{E} (K^N_1 (Z^{x, N}_{t, s})) -\mathbbm{E} (K^N_1 (Z^{y,
    N}_{t, s})) |$ we use the following fact: both processes $Z^{x, N}_{t, s}$ and
    $Z^{y, N}_{t, s}$ evolved according to the mean-field dynamics during a period
    of time $t - s$, which is long enough to ensure that the densities of $Z^{x,
    N}_{t, s}$ and $Z^{y, N}_{t, s}$ are close if their starting positions $x$ and
    $y$ are close. It follows that the difference $| \mathbbm{E} (K^N_1 (Z^{x,
    N}_{t, s})) -\mathbbm{E} (K^N_1 (Z^{y, N}_{t, s})) |$ is also small in that
    case (Corollary \ref{corollary-to-lemma}). More precisely,
    \begin{eqnarray*}
      | K^N_1 (Z^{x, N}_{t, s}) - K^N_1 (Z^{y, N}_{t, s}) | & \leqslant & | K^N_1
      (Z^{x, N}_{t, s}) -\mathbbm{E} (K^N_1 (Z^{x, N}_{t, s})) | + | K^N_1 (Z^{y,
      N}_{t, s}) -\mathbbm{E} (K^N_1 (Z^{y, N}_{t, s})) |\\
      &  & + | \mathbbm{E} (K^N_1 (Z^{x, N}_{t, s})) -\mathbbm{E} (K^N_1 (Z^{y,
      N}_{t, s})) |\\
      & \leqslant & 2 N^{- (\alpha + \delta)} + \frac{| x - y |}{(t - s)^{3 / 2}}
      \| k_1^N \|_1,
    \end{eqnarray*}
    is true in the event $\mathcal{A}_t \cap \mathcal{B}^2_t \cap
    \mathcal{C}_t^{x, y}$. Consequently the expected value in $\mathcal{A}_t \cap
    \mathcal{B}^2_t$ for fixed starting points $x, y$ can be bounded as:
    \[ \mathbbm{E} (| K^N_1 (Z^{x, N}_{t, s}) - K^N_1 (Z^{y, N}_{t, s}) |
       |\mathcal{A}_t \cap \mathcal{B}^2_t) \leqslant \frac{| x - y |}{(t - s)^{3
       / 2}}  \| k_1^N \|_1 + 2 N^{- (\alpha + \delta)} \mathbbm{P} (\mathcal{A}_t
       \cap \mathcal{B}^2_t) + C_{\gamma} N^{- \gamma} . \]
    Next, with (\ref{integral-delta}) we find an estimate for the original
    processes
    \begin{eqnarray*}
      \mathbbm{E} (| K^N_1 (Z^N_{t, s}) - K^N_1 (Z^N_{t, 0}) | |\mathcal{A}_t \cap
      \mathcal{B}^2_t) & \leqslant & \frac{\mathbbm{E} (| Z^N_{s, s} - Z^N_{s, 0}
      | | \mathcal{A}_t \cap \mathcal{B}^2_t)}{(t - s)^{3 / 2}}  \| k_1^N \|_1\\
      &  & + 2 N^{- (\alpha + \delta)} \mathbbm{P} (\mathcal{A}_t \cap
      \mathcal{B}^2_t)\\
      &  & + C_{\gamma} N^{- \gamma}\\
      & \leqslant & (\log N)^{3 / 4} \mathbbm{E} (| Z^N_{s, s} - Z^N_{s, 0} | |
      \mathcal{A}_t \cap \mathcal{B}^2_t)\\
      &  & + 2 N^{- (\alpha + \delta)} \mathbbm{P} (\mathcal{A}_t \cap
      \mathcal{B}^2_t)\\
      &  & + C_{\gamma} N^{- \gamma},
    \end{eqnarray*}
    where for the last inequality we used that $t - s = (\log N)^{- 3 / 2}$ and
    $\| k_1^N \|_1 \leqslant (\log N)^{- 3 / 2}$. Consequently,
    \begin{eqnarray*}
      \mathbbm{E} (| K^N_1 (Z^N_{t, s}) - K^N_1 (Z^N_{t, 0}) | |\mathcal{A}_t \cap
      \mathcal{B}^2_t) & \leqslant & (\log N)^{3 / 4}  \frac{f_N (t - \tau)}{f_N
      (s)} \mathbbm{E} (| Z^N_{t, t} - Z^N_{t, \tau} | | \mathcal{A}_t \cap
      \mathcal{B}^2_t)\\
      &  & + 2 N^{- (\alpha + \delta)} \mathbbm{P} (\mathcal{A}_t \cap
      \mathcal{B}^2_t) + C_{\gamma} N^{- \gamma}\\
      & \leqslant & (\log N)^{3 / 4} f_N (t - \tau) \mathbbm{E} (| Z^N_{t, t} -
      Z^N_{t, \tau} | | \mathcal{A}_t \cap \mathcal{B}^2_t)\\
      &  & + 2 N^{- (\alpha + \delta)} \mathbbm{P} (\mathcal{A}_t \cap
      \mathcal{B}^2_t) + C_{\gamma} N^{- \gamma} .
    \end{eqnarray*}
    Together with (\ref{split-1}) and (\ref{split-3}) this covers all three terms
    appearing in (\ref{split-the-force}). We can adapt $N_0 \in \mathbbm{N}$
    chosen at the beggining of the proof so that for $N \geqslant N_0$:
    \begin{eqnarray*}
      \mathbbm{E} (| K^N (Z^N_{t, t}) - K^N (Z^N_{t, 0}) | | \mathcal{A}_t \cap
      \mathcal{B}^2_t) & \leqslant & 7 (\log N)^{3 / 4} f_N (t - \tau) \mathbbm{E}
      (| Z^N_{t, t} - Z^N_{t, \tau} | | \mathcal{A}_t \cap \mathcal{B}^2_t)\\
      &  & + 8 \log \log Nf_N (t - \tau) \mathbbm{E} (| Z^N_{t, t} - Z^N_{t,
      \tau} | | \mathcal{A}_t \cap \mathcal{B}^2_t)\\
      &  & + 2 N^{- (\alpha + \delta)} \mathbbm{P} (\mathcal{A}_t \cap
      \mathcal{B}^2_t) + C_{\gamma} N^{- \gamma}\\
      & \leqslant & 8 (\log N)^{3 / 4} f_N (t - \tau) \mathbbm{E} (| Z^N_{t, t} -
      Z^N_{t, \tau} | | \mathcal{A}_t \cap \mathcal{B}^2_t)\\
      &  & + 2 N^{- (\alpha + \delta)} \mathbbm{P} (\mathcal{A}_t \cap
      \mathcal{B}^2_t) + C_{\gamma} N^{- \gamma} .
    \end{eqnarray*}
    Going back to (\ref{first-split}) we use this last estimate for the first
    term, the bound
    \[ | K^N (Z^N_{t, 0}) - \overline{K}^N_t (Z^N_{t, 0}) | \leqslant N^{- (\alpha
       + \delta)} \]
    in $\mathcal{A}_t \cap \mathcal{B}^2_t$ for the second term and the Lipschitz
    continuity of $\overline{K}^N_t$
    \[ | \overline{K}^N_t (Z^N_{t, 0}) - \overline{K}^N_t (Z^N_{t, \tau}) |
       \leqslant L | Z^N_{t, 0} - Z^N_{t, \tau} | \leqslant L \left( 1 + \frac{f_N
       (t - \tau)}{f_N (t)} \right)  | Z^N_{t, t} - Z^N_{t, \tau} | \]
    for the third. Bringing everything together, (\ref{first-split}) becomes
    \begin{eqnarray*}
      \mathbbm{E} (| K^N (Z^N_{t, t}) - \overline{K}^N_t (Z^N_{t, \tau}) | |
      \mathcal{A}_t \cap \mathcal{B}^2_t) & \leqslant & 8 (\log N)^{3 / 4} f_N (t
      - \tau) \mathbbm{E} (| Z^N_{t, t} - Z^N_{t, \tau} | | \mathcal{A}_t \cap
      \mathcal{B}^2_t)\\
      &  & + 2 N^{- (\alpha + \delta)} \mathbbm{P} (\mathcal{A}_t \cap
      \mathcal{B}^2_t) + C_{\gamma} N^{- \gamma}\\
      &  & + N^{- (\alpha + \delta)} \mathbbm{P} (\mathcal{A}_t \cap
      \mathcal{B}^2_t)\\
      &  & + L (1 + f_N (t - \tau)) \mathbbm{E} (| Z^N_{t, t} - Z^N_{t, \tau} | |
      \mathcal{A}_t \cap \mathcal{B}^2_t)\\
      & \leqslant & 9 (\log N)^{3 / 4} f_N (t - \tau) \mathbbm{E} (| Z^N_{t, t} -
      Z^N_{t, \tau} | | \mathcal{A}_t \cap \mathcal{B}^2_t)\\
      &  & + 3 N^{- (\alpha + \delta)} \mathbbm{P} (\mathcal{A}_t \cap
      \mathcal{B}^2_t) + C_{\gamma} N^{- \gamma},
    \end{eqnarray*}
    which is true if $N$ is greater than some new $N_0$ depending now also on the
    Lipschitz constant $L$. Finally, from $f_N (t - \tau) \leqslant 2$ it follows
    that
    \begin{eqnarray*}
      \mathbbm{E} (| K^N (Z^N_{t, t}) - \overline{K}^N_t (Z^N_{t, \tau}) | |
      \mathcal{A}_t \cap \mathcal{B}^2_t) & \leqslant & 18 (\log N)^{3 / 4}
      \mathbbm{E} (| Z^N_{t, t} - Z^N_{t, \tau} | | \mathcal{A}_t \cap
      \mathcal{B}^2_t)\\
      &  & + 3 N^{- (\alpha + \delta)} \mathbbm{P} (\mathcal{A}_t \cap
      \mathcal{B}^2_t) + C_{\gamma} N^{- \gamma},
    \end{eqnarray*}
    proving (\ref{theorem-master-inequality}). As a consequence:
    \[ \mathbbm{E} (\partial^+_t J^N_t) \leqslant \mathbbm{E} (h (\tau, t)
       |\mathcal{A}_t \backslash\mathcal{B}^2_t) +\mathbbm{E} (h (\tau, t)
       |\mathcal{A}_t \cap \mathcal{B}^2_t) \leqslant 2 C_{\gamma} N^{- \gamma} =
       : \tilde{C}_{\gamma} N^{- \gamma} . \]

    It just remains to \ estimate the measure of the complementary sets of
    $\mathcal{B}_t^1, \mathcal{B}_t^2$ and $\mathcal{C}_t^{x, y}$ as defined in
    (\ref{set-B1}), (\ref{set-B2}) and (\ref{set-C}). The constants $T > 0$,
    $\alpha \in (0, 1 / 2)$ and $\delta > 0$ are the ones we fixed at the
    beginning of this section.
    
    \begin{proposition}
      \label{measure-exceptional-sets}{\tmem{(Measure of the exceptional sets)}}
      For each $\gamma > 0$ there exists a positive constant $C_{\gamma}$ such
      that
      \begin{enumerateroman}
        \item \label{loln-0}$\mathbbm{P} (S^1_t \cup S^2_t \cup S^3_t) \leqslant
        C_{\gamma} N^{- \gamma}$ for each $0 \leqslant t \leqslant T$ , where
        
        \begin{tabular}{cc}
          $S^1_t \assign \{ | K^N (Y^N_t) - \overline{K}^N_t (Y^N_t) |_{\infty}
          \geqslant N^{- (\alpha + \delta)} \}$, & \\
          $S^2_t \assign \{ | L^N (Y^N_t) - \overline{L}^N_t (Y^N_t) |_{\infty}
          \geqslant 1 \}$, & $S^3_t \assign \{ | L^N_2 (Y^N_t) -
          \overline{L}^N_{2, t} (Y^N_t) |_{\infty} \geqslant 1 \}$.
        \end{tabular}
        
        {\noindent}Consequently $\mathbbm{P} (\Omega \backslash\mathcal{B}^1_t)
        \leqslant C_{\gamma} N^{- \gamma}$ and $\mathbbm{P} (\Omega
        \backslash\mathcal{B}^2_t) \leqslant C_{\gamma} N^{- \gamma}$ hold for
        each $0 \leqslant t \leqslant T$.
        
        \item \label{loln-s} $\mathbbm{P} (| K^N_1 (Z^{x, N}_{t, s})
        -\mathbbm{E} (K^N_1 (Z^{x, N}_{t, s})) |_{\infty} \geqslant N^{- (\alpha +
        \delta)}) \leqslant C_{\gamma} N^{- \gamma}$ holds for any $x \in
        \mathbbm{R}^{2 N}$ and any $T \geqslant t \geqslant s \geqslant 0$
        satisfying $t - s \geqslant (\log N)^{- r}$ for some $r \geqslant 0$.
        Consequently, $\mathbbm{P} (\Omega \backslash\mathcal{C}_t^{x, y})
        \leqslant 2 C_{\gamma} N^{- \gamma}$ for any $x, y \in \mathbbm{R}^{2 N}$
        and $0 \leqslant s \leqslant t \leqslant T$.
      \end{enumerateroman}
    \end{proposition}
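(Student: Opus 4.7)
The plan is to derive both statements as direct applications of the Law of Large Numbers in Proposition \ref{loln-general}. In each case I will identify the configuration of independent random variables, verify the density bound (\ref{loln-density-condition}), identify the kernel $h$ playing the role of the function in the LoLN, and verify its growth condition; the quantitative estimate then follows immediately.

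For part (\ref{loln-0}), the particles $Y^{1,N}_t,\dots,Y^{N,N}_t$ are i.i.d.\ with common density $\rho^N_t$, which by Proposition \ref{Linf-estimates} is bounded in $L^\infty$ uniformly in $N$ and in $t\in[0,T]$. Hence $\log N\,\|\rho^N_t\|_\infty+\|\rho^N_t\|_\infty^{2}\le C\log N$, so (\ref{loln-density-condition}) holds for any $\varepsilon>0$ once $N$ is large. For $S^1_t$ I apply Proposition \ref{loln-general} with $h=\chi k^N$: by construction $|k^N(x)|\le C\min\{N^\alpha,|x|^{-1}\}$, which matches the hypothesis with $C_h=C$. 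The allowed error $N^{-(\alpha+\delta)}$ agrees with the one in the proposition for our choice $\alpha+\delta<1/2$, and the LoLN bound $C_\gamma N^{-\gamma}$ for $S^1_t$ is immediate. For $S^2_t$ and $S^3_t$ the function $l^{\nu}$ does not fit the form $\min\{N^\alpha,|x|^{-1}\}$, but it does satisfy $|l^{\nu}(y)|\le C\min\{N^{2\alpha},|y|^{-2}\}$, and the allowed error is a constant ($\ge 1$) instead of $N^{-(\alpha+\delta)}$. Repeating the moment computation in the proof of Proposition \ref{loln-general}, the key bound becomes $\|l^\nu\|_\infty\le CN^{2\alpha}$ and $\int (l^\nu)^2\,dy\le CN^{2\alpha}$ (the latter obtained by splitting at $|y|=4\nu^{-1}$: the inner region gives $N^{4\alpha}\cdot N^{-2\alpha}=N^{2\alpha}$ and the outer region gives a convergent integral of $|y|^{-4}$ contributing the same order). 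Substituting these into the multinomial estimate and using Markov of sufficiently high order $2m$ yields $\mathbb{P}(|L^N-\overline L^N|_\infty\ge 1)\le C_m N^{-m(1-2\alpha)}$, which is smaller than $C_\gamma N^{-\gamma}$ by choosing $m$ large; the same computation applies verbatim to $l^{\nu_2(N)}$, as the only difference is the scale of the cutoff. A union bound over the three sets $S^1_t,S^2_t,S^3_t$ then gives the claim, and $\mathcal{B}^1_t,\mathcal{B}^2_t$ are exactly the complements of unions of (at most) two or three such sets.

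For part (\ref{loln-s}), the $N$ coordinates $Z^{x,1,N}_{t,s},\dots,Z^{x,N,N}_{t,s}$ of the process $Z^{x,N}_{t,s}$ are independent because they are driven by independent Brownian motions from the deterministic initial configuration $x$. Each coordinate has density $u^{x,i,N}_{t,s}$, for which Lemma \ref{lemma}(\ref{density-starting-at-delta}) gives $\|u^{x,i,N}_{t,s}\|_\infty\le C((t-s)^{-1}+1)\le C(\log N)^{r}+C$, since $t-s\ge(\log N)^{-r}$. Consequently $\log N\,\|u^{x,i,N}_{t,s}\|_\infty+\|u^{x,i,N}_{t,s}\|_\infty^{2}=O((\log N)^{\max(r+1,2r)})$, which is dominated by $N^{\varepsilon/2}$ for any $\varepsilon>0$ once $N$ is large, so (\ref{loln-density-condition}) holds. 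Taking $h=\chi k^N_1=\chi(k^N-k^N_2)$, and noting that both $k^N$ and $k^N_2$ are bounded by $(2\pi|x|)^{-1}$ and by $CN^\alpha$ respectively, we get $|k^N_1(x)|\le C\min\{N^\alpha,|x|^{-1}\}$, so the hypothesis of Proposition \ref{loln-general} is met. Applying the LoLN to each of the starting points $x$ and $y$ and taking a union bound yields $\mathbb{P}(\Omega\setminus\mathcal{C}^{x,y}_t)\le 2C_\gamma N^{-\gamma}$, with a constant $C_\gamma$ independent of $x,y\in\mathbb{R}^{2N}$ (as the LoLN bound depends only on the $L^\infty$ bound of the densities, which is controlled uniformly in $x$ through Lemma \ref{lemma}).

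The main technical point is the adaptation of the LoLN moment computation to the functions $l^N$ and $l^N_2$, which are genuinely more singular than the Coulomb kernel: the quadratic scaling $|l^\nu(y)|\lesssim|y|^{-2}$ produces a near-singular contribution of size $N^{2\alpha}$ rather than $N^\alpha$, but this is compensated by the fact that we only need to beat a constant error. Once this adaptation is carried out once, the remaining cases are either standard applications of Proposition \ref{loln-general} ($S^1_t$ and part (\ref{loln-s})) or carbon copies of it ($S^3_t$ from $S^2_t$), and all three exceptional sets acquire bounds $C_\gamma N^{-\gamma}$ of the advertised form.
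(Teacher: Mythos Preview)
Your argument is correct, and for $S^1_t$ and for part (\ref{loln-s}) it coincides with the paper's proof essentially verbatim (independence of the coordinates, uniform $L^\infty$ bound on the densities via Proposition~\ref{Linf-estimates} resp.\ Lemma~\ref{lemma}(\ref{density-starting-at-delta}), and a direct appeal to Proposition~\ref{loln-general} with $h=\chi k^N$ resp.\ $h=\chi k^N_1$).

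The only genuine difference is in how you handle $S^2_t$ and $S^3_t$. You observe that $l^{\nu}$ is too singular to fit the hypothesis $|h|\le C\min\{N^\alpha,|x|^{-1}\}$ and therefore reopen the multinomial moment computation of Proposition~\ref{loln-general}, feeding in the bounds $\|l^\nu\|_\infty\le CN^{2\alpha}$ and $\|(l^\nu)^2\|_1\le CN^{2\alpha}$ to arrive at $\mathbbm{P}(|L^N-\overline L^N|_\infty\ge 1)\le C_mN^{-m(1-2\alpha)}$. The paper avoids this extra work by a rescaling trick: it observes that $N^{-\alpha}l^{\nu}(y)\le C\min\{N^\alpha,|y|^{-1}\}$ (check the two regions $|y|\lessgtr 4\nu^{-1}$), so that Proposition~\ref{loln-general} applies directly to $h=N^{-\alpha}l^{\nu}$; the event $\{|L^N-\overline L^N|_\infty\ge 1\}$ is rewritten as $\{N^{-\alpha}|L^N-\overline L^N|_\infty\ge N^{-\alpha}\}\subseteq\{N^{-\alpha}|L^N-\overline L^N|_\infty\ge N^{-(\alpha+\delta)}\}$, and the conclusion follows without touching the proof of the law of large numbers. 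The paper's route is shorter and keeps Proposition~\ref{loln-general} as a black box; your route is more hands-on and makes the exponent $1-2\alpha$ explicit, which is harmless but not needed here.
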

    
    \begin{proof}
      \ref{loln-0}. First note that the mean-field force $\overline{K}^N_{t, i}
      (Y^N_t)$ can be written in terms of the expected value of $K^N$ as
      $\overline{K}^N_{t, i} (Y^N_t) =\mathbbm{E}_{(- i)} (K^N_i (Y^N_t))$ and
      therefore the first set $S^1_t$ is equal to the set $\{ \sup_{1 \leqslant i
      \leqslant N} | K^N_i (Y^N_t) -\mathbbm{E}_{(- i)} (K^N_i (Y^N_t)) |
      \geqslant N^{- (\alpha + \delta)} \}$. Moreover, $Y^1_t, \ldots, Y^N_t$ are
      already pairwise independent and the $L^{\infty}$-norm of its probability
      density $\rho^N_t$ is bounded uniformly in $N$ and $t \in [0, T]$ by
      Proposition \ref{Linf-estimates}. Therefore, from Proposition \ref{loln-general} follows the existence of a constant $C_{\gamma} > 0$, independent
      of $t$, with
      \[ \mathbbm{P} (S^1_t) =\mathbbm{P} (| K^N (Y^N_t) - \overline{K}^N_t
         (Y^N_t) |_{\infty} \geqslant N^{- (\alpha + \delta)}) \leqslant
         C_{\gamma} N^{- \gamma}, \]
      for each $t \in [0, T]$.
      
      The remaining sets $S^2_t$ and $S^3_t$ can be expressed in terms of the
      expected value of $L^N$ resp. $L_2^N$ in an analogous way. Also note that
      both $| N^{- \alpha} L^N_i (x) |$ and $| N^{- \alpha} L^N_{2 i} (x) |$ are
      bounded by $C \chi \min \{ N^{\alpha}, | x |^{- 1} \}$. Proposition
      \ref{loln-general} then implies for $S^2_t$ that
      \begin{eqnarray*}
        \mathbbm{P} (| L^N (Y^N_t) - \overline{L}^N_t (Y^N_t) |_{\infty} \geqslant
        1) & = & \mathbbm{P} (N^{- \alpha}  | L^N (Y^N_t) - \overline{L}^N_t
        (Y^N_t) |_{\infty} \geqslant N^{- \alpha})\\
        & \leqslant & \mathbbm{P} (N^{- \alpha}  | L^N (Y^N_t) - \overline{L}^N_t
        (Y^N_t) |_{\infty} \geqslant N^{- (\alpha + \delta)})\\
        & \leqslant & C_{\gamma} N^{- \gamma},
      \end{eqnarray*}
      and in the same manner that $\mathbbm{P} (S^3_t) =\mathbbm{P} (| L_2 (Y^N_t)
      - \overline{L}^N_{2, t} (Y^N_t) |_{\infty} \geqslant 1) \leqslant C_{\gamma}
      N^{- \gamma}$ for each $t \in [0, T]$.
      
      \ref{loln-s}. Let $T \geqslant t \geqslant s \geqslant 0$ be such that $t -
      s \geqslant (\log N)^{- r}$ holds for some $r \geqslant 0$. First notice
      that for each fixed starting point $x \in \mathbbm{R}^{2 N}$ the processes
      $Z^{x, 1, N}_{t, s}, \ldots, Z^{x, N, N}_{t, s}$ are pairwise independent.
      Furthermore, the probability density $u_{t, s}^{x, i, N}$ of $Z^{x, i,
      N}_{t, s}$ satisfies
      \[ \| u_{t, s}^{x, i, N} \|_{\infty} \leqslant C ((t - s)^{- 1} + 1)
         \leqslant C (\log N)^r \]
      for $i = 1, \ldots, N$, by Lemma \ref{lemma}, meaning that the growth of $\|
      u_{t, s}^{x, i, N} \|_{\infty}$ is only logarithmic in $N$ and consequently
      condition (\ref{loln-density-condition}) is fulfilled independently of the
      times $t, s$ and the exponent $r$. Therefore there exists a constant
      $C_{\gamma} > 0$ such that, for any such $t, s$:
      \[ \mathbbm{P} (| K^N_1 (Z^{x, N}_{t, s}) -\mathbbm{E} (K^N_1 (Z^{x, N}_{t,
         s})) |_{\infty} \geqslant N^{- (\alpha + \delta)}) \leqslant C_{\gamma}
         N^{- \gamma} . \]
    \end{proof}
    
    \section{Proofs of Propositions \ref{Linf-estimates} and \ref{hˆlder-estimates}}\label{proofs-of-estimates}
    
    We now give the proof of Proposition \ref{Linf-estimates}.
    
    \begin{proof}
      One first proves the boundedness of $\rho$ in $L^p$ for each $1 < p <
      \infty$. The $L^{\infty}$ estimate follows from this fact and the
      boundedness of $\nabla c = k \ast \rho$ by an iterative argument.
      
      \
      
      {\noindent}{\tmem{Step 1: Uniform bounds in $L^p$, $p < \infty$.}}
      
      Notice that under the assumptions $\rho_0 \in L^p (\mathbbm{R}^2)$ for each
      $p \in [1, \infty]$. Then $\rho \in L^{\infty} (0, T ; L^p (\mathbbm{R}^2))$
      for any $T > 0$ and $1 \leqslant p < \infty$. See either {\cite[Proposition
      17]{blanchet_two-dimensional_2006}} or {\cite[Lemma
      2.7]{fernandez_uniqueness_2016}} for a proof.
      
      \tmfoldedenv{\ }{Note for step 2: In {\cite[Lemma
      4.1]{kowalczyk_preventing_2005}} they also need to show $L^p$ estimates
      but rely instead on the following equi-integrability property
      \[ \sup_{t \in [0, T]} \int (\rho - m)_+ \xrightarrow[m \rightarrow
         \infty]{} 0, \]
      which might not hold in our setting since their Hypothesis H5.1 used to
      prove it in {\cite[Theorem 5.3]{calvez_volume_2006}} does not hold: $h
      (0^+) = \log (0^+) \ngtr - \infty$. But no equi-integrability is needed with
      our extra assumption $\rho_0 \in L^{\infty}$, which implies that $\rho_0 \in
      L^p$ for each $p$.}
      
      {\noindent}{\tmem{Step 2: Uniform bounds in $L^{\infty}$.}}
      
      For this step we follow {\cite[Lemma 3.2]{calvez_volume_2006}} and
      {\cite[Lemma 4.1]{kowalczyk_preventing_2005}}. The second reference is
      much more detailed but only handles bounded domains. The proof can
      nevertheless be adapted for the whole space $\mathbbm{R}^2$ as described in
      the first paper.
      
      The following computations are performed only formally. One can justify them
      by performing the proof for the solutions $\rho^N$ of the regularised
      equation (\ref{macroscopic-eq-regularised}) and then passing to the limit.
      
      Let $\rho_m \assign (\rho - m)_+$. First notice that $\nabla c_{} = \| k
      \ast \rho \|_{\infty}$ is uniformly bounded: $\| k \ast \rho \|_{\infty}
      \leqslant C (\| \rho \|_3 + \| \rho \|_1)$ since $k \in k^{3 / 2} +
      L^{\infty}$, and the right hand side is uniformly bounded by the first step.
      We then prove the inequality
      \begin{eqnarray}
        \frac{\mathd}{\mathd t} \int \rho_m^p \mathd x & \leqslant & - Cp^2  \|
        \nabla c \|^2_{\infty}  \int \rho_m^p \mathd x \nonumber\\
        &  & + C^2 p^4  \| \nabla c \|^4_{\infty}  \left( \int \rho_m^{p / 2}
        \mathd x \right)^2 + Cp^2  \| \nabla c \|^2_{\infty} .  \label{L-inf-estimates}
      \end{eqnarray}
      From this we will conclude that $\sup_{t \in [0, T]}  \| \rho_m \|_p$ is
      bounded independently of $p$. The proof is then complete after taking the
      limit $p \rightarrow \infty$.
      
      We first multiply on both sides of the Keller Segel equation
      (\ref{macroscopic-eq}) by $\rho_m^{p - 1}$ and integrate to find
      \[ \frac{1}{p}  \frac{\mathd}{\mathd t} \int \rho^p_m \mathd x = \int \nabla
         \cdot \left( \nabla \rho + \chi \nabla c \rho \right) \rho^{p - 1}_m . \]
      Let $\Omega_t \assign \{ \rho (t) \geqslant m \}$ and notice that $\Omega_t$
      is uniformly bounded: $1 = \| \rho (t) \|_1 \geqslant m | \Omega_t |$. Then
      the integral on the right hand side equals
      \begin{eqnarray*}
        \int_{\Omega_t} \nabla \cdot \left( \nabla \rho + \chi  (k \ast \rho) \rho
        \right) \rho^{p - 1}_m & = & - \int_{\Omega_t} \left( \nabla \rho + \chi 
        (k \ast \rho) \rho \right) \nabla \rho^{p - 1}_m\\
        & = & - (p - 1)  \int \rho_m^{p - 2}  | \nabla \rho_m |^2 + \chi  (p - 1)
        \int \rho \rho_m^{p - 2} \nabla c \cdot \nabla \rho_m\\
        & = & - (p - 1)  \int \rho_m^{p - 2}  | \nabla \rho_m |^2 + \chi  (p - 1)
        \int \rho_m^{p - 1} \nabla c \cdot \nabla \rho_m\\
        &  & + \chi m (p - 1)  \int \rho_m^{p - 2} \nabla c \cdot \nabla \rho_m .
      \end{eqnarray*}
      Using that $\rho_m^{(p - k) / 2} \nabla \rho_m^{p / 2} = \frac{p}{2}
      \rho_m^{p - (k / 2 + 1)} \nabla \rho_m$ for any $k \in \mathbbm{R}$ the last
      expression equals
      \[ - \frac{4 (p - 1)}{p^2}  \int | \nabla \rho^{p / 2}_m |^2 + \frac{2 \chi 
         (p - 1)}{p}  \int \rho_m^{p / 2} \nabla c \cdot \nabla \rho^{p / 2}_m +
         \frac{2 \chi m (p - 1)}{p}  \int \rho_m^{(p - 2) / 2} \nabla c \cdot
         \nabla \rho^{p / 2}_m . \]
      For the last two terms we use the following Young's inequality: $| a \cdot b
      | \leqslant \frac{1}{4}  | a |^2 + | b |^2$ for any two vectors $a, b \in
      \mathbbm{R}^2$. Hence
      \[ (p - 1)  \int \chi \rho_m^{p / 2} \nabla c \cdot \frac{2}{p} \nabla
         \rho^{p / 2}_m \leqslant \frac{(p - 1)}{p^2} \int | \nabla \rho^{p / 2}_m
         |^2 + \chi^2  (p - 1)  \| \nabla c \|_{\infty}^2 \int \rho_m^p \]
      and
      \begin{eqnarray*}
        (p - 1)  \int \chi m \rho_m^{(p - 2) / 2} \nabla c \cdot \frac{2}{p}
        \nabla \rho^{p / 2}_m & \leqslant & \frac{(p - 1)}{p^2}  \int | \nabla
        \rho^{p / 2}_m |^2 + \chi^2 m^2  (p - 1)  \| \nabla c \|_{\infty}^2
        \int_{\Omega_t} \rho_m^{p - 2}\\
        & \leqslant & \frac{(p - 1)}{p^2}  \int | \nabla \rho^{p / 2}_m |^2 + C
        (p - 1)  \| \nabla c \|_{\infty}^2 \int_{\Omega_t} (\rho_m^p + 1)\\
        & \leqslant & \frac{(p - 1)}{p^2}  \int | \nabla \rho^{p / 2}_m |^2 + C
        (p - 1)  \| \nabla c \|_{\infty}^2 \int \rho_m^p\\
        &  & + C (p - 1)  \| \nabla c \|^2_{\infty}  | \Omega_t | .
      \end{eqnarray*}
      All together
      \begin{eqnarray*}
        \frac{\mathd}{\mathd t} \int \rho^p_m \mathd x & \leqslant & - \frac{2 (p
        - 1)}{p}  \int | \nabla \rho^{p / 2}_m |^2 + Cp (p - 1)  \int \rho^p_m +
        Cp (p - 1) \\
        & \leqslant & - \int | \nabla \rho^{p / 2}_m |^2 + Cp^2 \int \rho^p_m +
        Cp^2,
      \end{eqnarray*}
      for $p$ big enough.
      
      Now we use the Galiardo-Nirenberg-Sobolev inequality followed by Young's
      inequality
      \[ \| u \|^2_2 \leqslant C_{\tmop{GNS}}  \| \nabla u \|_2  \| u \|_1
         \leqslant \frac{C_{\tmop{GNS}}^2}{4}  \| u \|^2_1 + \| \nabla u \|^2_2 \]
      for $u = \rho_m^{p / 2}$:
      \[ (C + 1) p^2 \int \rho^p_m \leqslant \frac{(C + 1)^2 C^2_{\tmop{GNS}}
         p^4}{4}  \left( \int \rho^{p / 2}_m \right)^2 + \int | \nabla \rho^{p /
         2}_m |^2 . \]
      Therefore
      \[ \frac{\mathd}{\mathd t} \int \rho^p_m \mathd x \leqslant - p^2  \int
         \rho^p_m + Cp^4  \left( \int \rho^{p / 2}_m \right)^2 + Cp^2, \]
      which proves (\ref{L-inf-estimates}).
      
      Let now $w_j = \int \rho_m^{2^j}$, $S_j \assign \sup_{t \in [0, T]} \int
      \rho^{2^j}_m$ for $j \in \mathbbm{N}$. Then
      \[ \frac{\mathd}{\mathd t} w_j \mathd x \leqslant - 2^{2 j} w_j + 2^{2 j} 
         (C 2^{2 j} S^2_{j - 1} + C) . \]
      The solution of
      \[ \frac{\mathd}{\mathd t} v = - \varepsilon v + \varepsilon C \]
      is $v (t) = \mathe^{- \varepsilon t} v_0 + C (1 - \mathe^{- \varepsilon
      t})$. If we set $v_0 = w_j (0)$ it holds
      \[ w_j \leqslant v \leqslant w_j (0) + C 2^{2 j} S^2_{j - 1} + C \leqslant
         \| \rho_0 \|^{2^j}_{\infty}  | \Omega_0 | + C 2^{2 j} S^2_{j - 1} + C. \]
      It follows that
      \[ S_j = \sup_{t \in [0, T]} w_j \leqslant C \max \{ \| \rho_0
         \|^{2^j}_{\infty}, 2^{2 j} S^2_{j - 1} + 1 \} . \]
      For $\tilde{S}_j \assign S_j  \| \rho_0 \|^{2^{- j}}_{\infty}$ holds the
      following:
      \[ \tilde{S}_j \leqslant C \max \{ 1, 2^{2 j}  \tilde{S}^2_{j - 1} \} . \]
      Hence
      \begin{eqnarray*}
        \log_+  \tilde{S}_j & \leqslant & \max \{ \log_+ C, \log_+ C 2^{2 j} 
        \tilde{S}^2_{j - 1} \}\\
        & \leqslant & 2 \log_+ \tilde{S}_{j - 1} + j \log 4 + C,
      \end{eqnarray*}
      which implies $2^{- j} \log_+  \tilde{S}_j - 2^{- (j - 1)} \log_+ 
      \tilde{S}_{j - 1} \leqslant j 2^{- j} \log 4 + C 2^{- j}$ for $j \in
      \mathbbm{N}$. Adding up both sides over $j = 1, \ldots, J$ we find
      \begin{eqnarray*}
        2^{- J} \log_+  \tilde{S}_J - \log_+  \tilde{S}_0 & = & \sum_{j = 1}^J
        2^{- j} \log_+  \tilde{S}_j - 2^{- (j - 1)} \log_+  \tilde{S}_{j - 1}\\
        & \leqslant & \sum_{j = 1}^{\infty} j 2^{- j} \log 4 + C 2^{- j}
        \leqslant C,
      \end{eqnarray*}
      for a constant $C$ independent of $J$. Since $\tilde{S}_0 \leqslant \sup_{t
      \in [0, T]} \frac{\| \rho (t) \|_1}{\| \rho_0 \|_{\infty}}$ is also bounded,
      we conclude that $S^{2^{- j}}_j = \left( \sup_{t \in [0, T]} \int
      \rho^{2^j}_m \right)^{2^{- j}} = \sup_{t \in [0, T]} \left( \int
      \rho^{2^j}_m \right)^{2^{- j}} \leqslant C$ for some contant $C$ not
      depending on $j$. We finally perform the limit $j \rightarrow \infty$ and
      conclude
      \[ \sup_{t \in [0, T]}  \| \rho_m \|_{\infty} = \sup_{t \in [0, T]} \lim_{j
         \rightarrow \infty} \| \rho_m \|_{2^j} \leqslant \lim_{j \rightarrow
         \infty} \sup_{t \in [0, T]} \| \rho_m \|_{2^j} \leqslant C. \]
      \tmfoldedenv{\ }{Note that this also proves that $\sup_{t \in [0, T]} \|
      \rho_m \|_p \leqslant C$ for each $p$ and not only for those of the form
      $2^j$, since for $p \leqslant 2^j$ it holds $\| \rho_m \|_p \leqslant C \|
      \rho_m \|_{2^j}  | \Omega_t | \leqslant C \| \rho_m \|_{2^j}$. But this
      bound is not uniform in $t$ and $p$ if we don't introduce a lower cutoff $m
      > 0$.}
    \end{proof}
    
    \tmfoldedenv{We finish with the proof of Proposition \ref{hˆlder-estimates}.}{Proof of the first statement, following {\cite[Lemma
    2.8]{fernandez_uniqueness_2016}}:
    
    The key is to prove that
    \[ \partial_t \rho, \nabla \rho, \partial_t \rho^N, \nabla \rho^N \in L^p ((0,
       T) \times \mathbbm{R}^2) \]
    for any $p \in (1, \infty)$ uniformly in $N$ and to use Morrey's inequality
    \[ \| u \|_{C^{0, \alpha} (\mathbbm{R}^n)} \leqslant C \| u \|_{W^{1, p}
       (\mathbbm{R}^n)}, \quad \text{where } n < p \leqslant \infty \text{ and }
       \alpha = 1 - \frac{n}{p} \]
    for $n = 3$ and some $p > 3$.
    
    Observe that
    \begin{equation}
      \partial_t \rho - \Delta \rho = \chi \rho^2 + \chi (k \ast \rho) \cdot
      \nabla \rho = : f \label{heat-equation}
    \end{equation}
    and
    \begin{equation}
      \partial_t \rho^N - \Delta \rho^N = \chi \rho \nabla \cdot (k \ast \rho) +
      \chi (k^N \ast \rho^N) \cdot \nabla \rho^N = : f^N . \label{heat-equation-N}
    \end{equation}
    We shall use regularity results for the heat equation. According to
    {\cite[Lemma 2.7]{fernandez_uniqueness_2016}}, since $\rho_0 \in L^p
    (\mathbbm{R}^2)$ for any $p \in [2, \infty)$, for any weak solution $\rho$ and
    any $T < \infty$ it holds $\rho \in L^{\infty} (0, T ; L^p (\mathbbm{R}^2))$,
    and $\nabla \rho \in L^2 ((0, T) \times \mathbbm{R}^2)$. Since $k \ast \rho
    \in L^{\infty} ((0, T) \times \mathbbm{R}^2)$, we conclude that $f \in L^2
    ((0, T) \times \mathbbm{R}^2)$. Next we show that the same holds true for
    $f^N$. By {\cite[Lemma 13]{blanchet_two-dimensional_2006}}, $\nabla
    \sqrt{\rho^N} \in L^2 ((0, T) \times \mathbbm{R}^2)$ with uniform bounds in
    $N$. From this we deduce that $\rho^N$ is in $L^{\infty} (0, T ; L^p
    (\mathbbm{R}^2))$ for any $p \in (1, \infty)$ and that $\nabla \rho^N \in L^2
    ((0, T) \times \mathbbm{R}^2)$ holds. Indeed, by the
    Galiardo-Nirenberg-Sobolev inequality for $q > 2$
    \[ \int | \rho^N |^{q / 2} \leqslant (C_{\tmop{GNS}}^{(q)})^{q / 2}  \left(
       \int \left| \nabla \sqrt{\rho^N} \right|^2 \right)^{q / 2 - 2}, \]
    so $\rho^N \in L^{\infty} (0, T ; L^p (\mathbbm{R}^2))$ for any $p \in (1,
    \infty)$. Moreover,
    \[ \int | \nabla \rho^N |^2 = \int \rho^N  \left| \frac{1}{\sqrt{\rho^N}}
       \nabla \rho^N \right|^2 = 4 \int \rho^N  \left| \nabla \sqrt{\rho^N}
       \right|^2 \leqslant C \| \rho^N \|_{\infty}  \left\| \nabla \sqrt{\rho^N}
       \right\|^2_2 \]
    and so is $\nabla \rho^N \in L^2 ((0, T) \times \mathbbm{R}^2)$. As before, it
    holds $k^N \ast \rho^N \in L^{\infty} ((0, T) \times \mathbbm{R}^2)$. We
    finally show that $\nabla \cdot (k^N \ast \rho^N) \in L^2 ((0, T) \times
    \mathbbm{R}^2)$ and with this we can conclude that the right hand side $f^N$
    of (\ref{heat-equation-N}) is in $L^2 ((0, T) \times \mathbbm{R}^2)$. Note
    that $\| \nabla \cdot k^N \|_1 = 1$ (actually, $\nabla \cdot k^N
    \rightharpoonup \delta$ in $\mathcal{D}'$). Therefore, by Young's inequality,
    \[ \| (\nabla \cdot k^N) \ast \rho^N \|_2 \leqslant \| \nabla \cdot k^N \|_1 
       \| \rho^N \|_2 \leqslant C \]
    uniformly in $N$.
    
    By assumption is $\rho_0 \in H^1 (\mathbbm{R}^2)$, so the maximal regularity
    result of the heat equation in $L^2$-spaces for (\ref{heat-equation}) and
    (\ref{heat-equation-N}) implies that
    \[ \rho, \rho^N \in L^2 (0, T ; H^2 (\mathbbm{R}^2)) \cap L^{\infty} (0, T ;
       H^1 (\mathbbm{R}^2)), \quad \forall N \in \mathbbm{N}, \]
    with all these norms bounded by some constant depending only on $\rho_0$ and
    the final time $T$. Notice that the the norms in $L^2 ((0, T) \times
    \mathbbm{R}^2)$ of $f$ and $f^N$ depend only on $\rho_0$ and $T$. It follows
    that $\nabla \rho \in L^p ((0, T) \times \mathbbm{R}^2)$ for any $p \in [2,
    \infty)$ and consequently $(k \ast \rho) \nabla \rho \in L^p ((0, T) \times
    \mathbbm{R}^2)$ (see Remark \ref{embeddings} below).
    
    Now since $\rho_0 \in H^2 (\mathbbm{R}^2) \subseteq W^{2 - 2 / p, p}
    (\mathbbm{R}^2)$ for any $2 < p \leqslant 4$ (see Remark \ref{embeddings}
    again), from the maximal regularity result of the heat equation in
    $L^p$-spaces it follows that $\partial_t \rho, \partial^2_{i j} \rho,
    \partial_t \rho^N, \partial^2_{i j} \rho^N \in L^p ((0, T) \times
    \mathbbm{R}^2)$ for $i, j = 1, 2$, $N \in \mathbbm{N}$, and in particular we
    find
    \[ \partial_t \rho, \nabla \rho, \partial_t \rho^N, \nabla \rho^N \in L^p ((0,
       T) \times \mathbbm{R}^2), \quad \text{for each } N \in \mathbbm{N} \text{
       and } p \in (2, 4] . \]
    Morrey's inequality
    \[ \| u \|_{C^{0, \alpha} (\mathbbm{R}^n)} \leqslant C \| u \|_{W^{1, p}
       (\mathbbm{R}^n)}, \quad \text{where } n < p \leqslant \infty \text{ and }
       \alpha = 1 - \frac{n}{p}, \]
    with $n = 3$ and $p \in (2, 4]$ implies that $\rho, \rho^N \in C^{0, \alpha}
    ((0, T) \times \mathbbm{R}^2)$ for each $N \in \mathbbm{N}$ and $0 < \alpha
    \leqslant 1 / 4$.}
    
    \begin{proof}
      \ref{hˆlder}. From the proof of {\cite[Lemma
      2.8]{fernandez_uniqueness_2016}} follows that $\rho$ and $\rho^N$ are in
      $C^{0, \alpha} ((0, T) \times \mathbbm{R}^2)$ for each $N \in \mathbbm{N}$
      and $0 < \alpha \leqslant 1 / 4$. For fixed positive $\alpha \leqslant 1 /
      4$ this means that for each $t, s > 0$ and $x, y \in \mathbbm{R}^2$
      \[ | \rho (t, x) - \rho (s, y) | \leqslant C (| x - y |^{\alpha} + | t - s
         |^{\alpha}) \]
      holds for some positive constant $C$, which depends on the norms of $\rho$,
      $\partial_t \rho$ and $\nabla \rho$ in $L^p \left( \left( 0, {\nobreak} T
      \right) \times {\nobreak} \mathbbm{R}^2 \right)$ with $p \assign \frac{3}{1
      - \alpha} \in (3, 4]$. Since $\rho_0 \in H^2 (\mathbbm{R}^2) \subseteq C^{0,
      \alpha} (\mathbbm{R}^2)$, by taking $t = s$ above we find
      \[ | \rho (t, x) - \rho (t, y) | \leqslant C \max \{ \| \rho \|_p + \|
         \partial_t \rho \|_p + \| \nabla \rho \|_p, [\rho_0]_{0, \alpha} \}  | x
         - y |^{\alpha}, \]
      for each $t \in [0, T]$ and $x, y \in \mathbbm{R}^2$, and the analogous
      inequality holds for $\rho^N$. Let
      \[ C_1 \geqslant C \max \{ \| \rho \|_{W^{1, p} ((0, T) \times
         \mathbbm{R}^2)}, \sup_{N \in \mathbbm{N}} \| \rho^N \|_{W^{1, p} ((0, T)
         \times \mathbbm{R}^2)}, [\rho_0]_{0, \alpha} \} . \]
      Then $\rho, \rho^N \in L^{\infty} (0, T ; C^{0, \alpha} (\mathbbm{R}^2))$
      and $[\rho (t)]_{0, \alpha}, [\rho^N (t)]_{0, \alpha} \leqslant C_1$ for
      each $N \in \mathbbm{N}$ and $t \in [0, T]$.
      
      \ref{lipschitz}. Let $w = \phi \ast \rho = - \log | \cdot | \ast \rho$. We
      need to prove that $- \nabla w^N = k^N \ast \rho^N$ and $- \nabla w = k \ast
      \rho$ are Lipschitz continuous in $\mathbbm{R}^2$ uniformly in $N \in
      \mathbbm{N}$ and $t \in [0, T]$. It is then enough to show that all second
      derivatives of $w^N$ and $w$ are uniformly bounded. More precisely, we find
      \[ \| \partial_{i j} w^N (t) \|_{\infty} \leqslant C (\| \rho^N (t) \|_1 +
         \| \rho^N (t) \|_{\infty} + [\rho^N (t)]_{0, \alpha}), \quad N \in
         \mathbbm{N} \]
      and
      \[ \| \partial_{i j} w (t) \|_{\infty} \leqslant C (\| \rho (t) \|_1 + \|
         \rho (t) \|_{\infty} + [\rho (t)]_{0, \alpha}) \]
      for some constant $C > 0$. These are uniformly bounded on $[0, T]$ and $N
      \in \mathbbm{N}$ by part \ref{hˆlder} and Proposition \ref{Linf-estimates}.
      We just write down the proof for the limiting case $k \ast \rho$. For $k^N
      \ast \rho^N$ the steps are completely analogous.
      
      We split the integral as follows:
      \[ \partial_{i j} w (t, x) = \int_{| x - y | \leqslant 1} \partial_{i j}
         \phi (x - y) \rho (y) \mathd y + \int_{| x - y | \geqslant 1} \partial_{i
         j} \phi (x - y) \rho (y) \mathd y. \]
      The second term, since $\partial_{i j} \phi (x - y) \leqslant \frac{C}{| x -
      y |^2}$, is bounded by $C \| \rho \|_1$. For the first term we write
      \begin{eqnarray*}
        \int_{| x - y | \leqslant 1} \partial_{i j} \phi (x - y) \rho (y) \mathd y
        & = & \int_{| x - y | \leqslant 1} \partial_{i j} \phi (x - y)  (\rho (y)
        - \rho (x)) \mathd y\\
        &  & + \rho (x)  \int_{| x - y | \leqslant 1} \partial_{i j} \phi (x - y)
        \mathd y\\
        & = & \int_{| x - y | \leqslant 1} \partial_{i j} \phi (x - y)  (\rho (y)
        - \rho (x)) \mathd y\\
        &  & - \rho (x)  \int_{| x - y | = 1} \partial_i \phi (x - y) \nu_j (y)
        \mathd S (y) .
      \end{eqnarray*}
      Therefore in absolute value
      \begin{eqnarray*}
        \left| \int_{| x - y | \leqslant 1} \partial_{i j} \phi (x - y) \rho (y)
        \mathd y \right| & \leqslant & C [\rho (t)]_{0, \alpha}  \int_{| x - y |
        \leqslant 1} \frac{1}{| x - y |^{2 - \alpha}} \mathd y\\
        &  & + C \| \rho (t) \|_{\infty} .
      \end{eqnarray*}
      Putting all together we find
      \[ \| \partial_{i j} w (t) \|_{\infty} \leqslant C (\| \rho (t) \|_1 + \|
         \rho (t) \|_{\infty} + [\rho (t)]_{0, \alpha}) . \]
    \end{proof}
    
    \begin{remark}
      \label{embeddings}Below we list the space embeddings we used in the proof.
      \begin{enumerateroman}
        \item  $H^2 (\mathbbm{R}^2) \subseteq C^{0, \alpha} (\mathbbm{R}^2)$, for
        any $0 < \alpha < 1$, by the Sobolev embedding theorem {\cite[Theorem
        2.31]{demengel_functional_2012}}.
        
        \item If $f \in L^2 (0, T ; H^2 (\mathbbm{R}^2)) \cap L^{\infty} (0, T ;
        H^1 (\mathbbm{R}^2))$ then $\nabla_x f \in L^p ((0, T) \times
        \mathbbm{R}^2)$ for any $p \in (1, 4)$.
        
        Since $W^{1, 2} (\mathbbm{R}^2) \subseteq L^q (\mathbbm{R}^2)$ for any $2
        \leqslant q < \infty$, we have that
        \[ \nabla_x f \in L^2 (0, T ; L^q (\mathbbm{R}^2)) \cap L^{\infty} (0, T ;
           L^2 (\mathbbm{R}^2)), \quad \text{for } 2 \leqslant q < \infty . \]
        We then use the interpolation inequality
        \[ \| u \|_{p_{\theta}} \leqslant \| u \|_{p_0}^{\theta}  \| u \|_{p_1}^{1
           - \theta}, \quad \text{for } \theta \in [0, 1], \quad
           \frac{1}{p_{\theta}} = \frac{\theta}{p_0} + \frac{1 - \theta}{p_1} \]
        and find
        \begin{eqnarray*}
          \int_0^T \| \nabla_x f (t) \|_p^p \mathd t & \leqslant & \int_0^T (\|
          \nabla_x f \|^{\theta}_2  \| \nabla_x f \|^{1 - \theta}_{p_1})^p\\
          & \leqslant & \sup_{t \in [0, T]} \| \nabla_x f (t) \|_2^{\theta p} 
          \int_0^T \| \nabla_x f \|^{(1 - \theta) p}_{p_1} .
        \end{eqnarray*}
        By choosing $\theta = 1 / 2$ it holds for $p < 4$ that $p (1 - \theta)
        \leqslant 2$ and $p_1 = \frac{2 p (1 - \theta)}{2 - \theta p} = \frac{2
        p}{4 - p} < \infty$, and so is the right hand side of the last inequality
        finite.
        
        \item  $H^2 (\mathbbm{R}^2) \subseteq W^{2 - 2 / p, p} (\mathbbm{R}^2)$,
        for any $2 < p \leqslant 4$.
        
        By the Sobolev embedding theorem for fractional spaces {\cite[Theorem
        7.58]{adams_sobolev_1975}} it holds
        \[ H^2 (\mathbbm{R}^2) \subseteq W^{1 + 2 / p, p} (\mathbbm{R}^2), \quad
           \text{for any } 2 < p < \infty . \]
        Since $1 + 2 / p \geqslant 2 - 2 / p$ holds if $p \leqslant 4$, we
        conclude that
        \[ H^2 (\mathbbm{R}^2) \subseteq W^{1 + 2 / p, p} (\mathbbm{R}^2)
           \subseteq W^{2 - 2 / p, p} (\mathbbm{R}^2), \quad \text{for any } 2 < p
           \leqslant 4. \]
      \end{enumerateroman}
    \end{remark}
    
    \section*{Acknowledgements}
    
    We are greatful to Martin Kolb, Detlef D{\"u}rr, Christian Stinner and Tomasz
    Cieslak for many valuable discussions on this topic and to Miguel de Benito
    Delgado for reading and improving the manuscript. We further acknowledge the
    finantial support of the Studienstiftung des Deutschen Volkes.

    \end{document}